\newcommand{\RR}{\mathbb R}
\newcommand{\Ss}{\mathbb S}
\newcommand{\NN}{\mathbb N}
\newcommand{\ZZ}{\mathbb Z}
\newcommand{\TT}{\mathbb T}
\newcommand{\pat}{\partial_t}
\newcommand{\pax}{\partial_x}
\newcommand{\jeps}{\mathcal{J}_\epsilon*}
\newcommand{\vertiii}[1]{{\left\vert\kern-0.25ex\left\vert\kern-0.25ex\left\vert #1 
    \right\vert\kern-0.25ex\right\vert\kern-0.25ex\right\vert}}
\newcommand{\re}{\text{Re}}
\newcommand{\im}{\text{Im}}
\newcounter{comentcount}
\newcounter{teocount}
\newtheorem{lem}{Lemma}
\newtheorem{corol}{Corollary}
\newtheorem{teo}[teocount]{Theorem}  
\newtheorem{defi}{Definition}
\newtheorem{remark}{Remark}
\title[Generalized Keller-Segel]{On a generalized doubly parabolic Keller-Segel system in one spatial dimension}
\author[J. Burczak]{Jan Burczak}
\email{jb@impan.pl}
\address{Institute of Mathematics of the Polish Academy of Sciences, Warsaw, 21 00-956, Poland}
\author[R. Granero-Belinch\'{o}n]{Rafael Granero-Belinch\'{o}n}
\email{rgranero@math.ucdavis.edu}
\address{Department of Mathematics, University of California, Davis, CA 95616, USA}
\begin{document}
\begin{abstract}We study a doubly parabolic Keller-Segel system in one spatial dimension, with diffusions given by fractional laplacians. We obtain several local and global well-posedness results for the subcritical and critical cases (for the latter we need certain smallness assumptions). We also study dynamical properties of the system with added logistic term. Then, this model exhibits a spatio-temporal chaotic behavior, where a number of peaks emerge. In particular, we prove the existence of an attractor and provide an upper bound on the number of peaks that the solution may develop. Finally, we perform a numerical analysis suggesting that there is a finite time blow up if the diffusion is weak enough, even in presence of a damping logistic term. Our results generalize on one hand the results for local diffusions, on the other the results for the parabolic-elliptic fractional case.
\end{abstract}



\maketitle


\section{Introduction}

This paper is devoted to studies of the following generalized, doubly parabolic ($\tau=1$) Keller-Segel-type system with a logistic term ($r \ge 0$)
\begin{eqnarray}\label{eqa1}
\pat u & = & -\mu\Lambda^\alpha u+\pax(u\Lambda^{\beta-1} H v) +ru(1-u),\\ 
\label{eqa2}
\tau \pat v & = & -\nu\Lambda^\beta v-\lambda v+u,
\end{eqnarray}
on $\TT$, \emph{i.e.} the one dimensional periodic torus, where  $\Lambda =\sqrt{-\Delta}$ (for basic notation and definitions, see Section \ref{ssub:prel}). A similar model has been mentioned by Biler \& Wu, see \cite{BilerWu}, Section 5. In \eqref{eqa1}-\eqref{eqa2} we take parameters $\nu,\mu,\alpha, \beta>0,\lambda, r\geq0$ and nonnegative initial data $u_0$ and $v_0$. We will refer to \eqref{eqa1}-\eqref{eqa2} with $\tau=0$ as to the parabolic-elliptic system and with $\tau=1$ as to the doubly parabolic one. In order to clarify the terminology, let us simply define the case $\alpha >1$ as subcritical,  $\alpha =1$ as critical and  $\alpha <1$ as supercritical. 

The system \eqref{eqa1}-\eqref{eqa2} with  $\tau = 0$ and $\alpha =1$, $ \beta = 2$ is the $\mu(u) \equiv \mu$ simplification of the one considered by us in \cite{BG}, \emph{i.e.} 
\begin{eqnarray*}
\pat u & =&  \pax(-\mu(u)Hu+u\pax v) +ru(1-u), \\ 
0 & =&-\pax^2 v +   u-\langle u \rangle.
\end{eqnarray*}
\subsection{Motivation}\label{ssec:mot}
\subsubsection{Mathematical biology}
Our interest in  the system \eqref{eqa1}-\eqref{eqa2} stems from the mathematical studies of chemotaxis initiated by Keller \& Segel in \cite{keller1970initiation}. Chemotaxis is a chemically prompted motion of cells with density $u$ towards increasing concentrations of a chemical substance with density $v$. For instance, in the case of  the slime mold \emph{Dictyostelium Discoideum}, the signal is produced by the cells themselves and cell populations might form aggregates in finite time. Chemotaxis also takes place in certain bacterial populations, such as of \emph{Escherichia coli} and \emph{Salmonella typhimurium}, and it results in their arrangement into a variety of spatial patterns. During embryogenesis, chemotaxis plays a role in angiogenesis, pigmentation patterning and neuronal development. It is also important in cancerogenesis, since certain tumors force the host organism to link them with its blood system via chemical signals. Specifically, in presence of the logistic term, our model is of particular importance in view of its relationship with the three-component urokinase plasminogen invasion model (see Hillen, Painter \& Winkler \cite{Hillen1}).

Moreover, let us observe that the cell kinetics model M$8$ in Hillen \& Painter \cite{Hillen3}, that describes a  bacterial pattern formation or cell movement and growth during angiogenesis, reads
\begin{eqnarray}\label{eqa1local}
\pat u & = & \mu \Delta u-\nabla(u\nabla v) +ru(1-u)\\ 
\label{eqa2local}
\pat v & = & \nu \Delta v-\lambda v+u.
\end{eqnarray}
System \eqref{eqa1local}-\eqref{eqa2local} in one dimension is especially close to our system  \eqref{eqa1}-\eqref{eqa2}, since it is  given by choosing $\alpha = \beta = 2$ in  \eqref{eqa1}-\eqref{eqa2} with $\tau =1$.

The parabolic-elliptic ($\tau=0$) version of the system \eqref{eqa1local}-\eqref{eqa2local}  is close to astrophysical models of a gravitational collapse. It is very similar in spirit to the Zel'dovich approximation \cite{zel1970gravitational} used in cosmology to study the formation of large-scale structures in the primordial universe, see also Ascasibar, Granero-Belinch\'on \& Moreno \cite{AGM}. It is also connected with the Chandrasekhar equation for the gravitational equilibrium of polytropic stars, statistical mechanics and the Debye system for electrolytes, see Biler \& Nadzieja \cite{BilNad94}.
A more detailed presentation of some results on systems of type  \eqref{eqa1local}-\eqref{eqa2local} and \eqref{eqa1}-\eqref{eqa2} follows in Section \ref{S9}. 
\subsubsection{Fractional diffusion}
The importance of the fractional diffusion generalization \eqref{eqa1}-\eqref{eqa2} of   \eqref{eqa1local}-\eqref{eqa2local} is twofold. 

Primarily, there is a serious mathematical interest involved. To explain this point, let us recall that chemotaxis systems model two opposite phenomena: one is diffusion of cells due to their random movements, the other is their tropism toward higher concentrations of a chemical that may result in their aggregations. Hence it is mathematically interesting to establish the minimal strength of diffusion that overweights the chemotactic forces, hence giving, roughly speaking, the global existence of regular solutions or, equivalently, to study the maximal strength of diffusion that does not prevent blowup. 

Let us recall that for the parabolic-elliptic in two space dimensions the standard diffusion $\Delta$ is critical; moreover the exact initial mass $\|u_0 \|_{L^1}$ that divides the regimes of global existence and of blowup has been computed, compare for instance Bournavas \& Calvez \cite{BouCalChapter} and its references. Let us remark here that the blowup phenomenon together with the mass threshold was shown by J\"ager \& Luckhaus \cite{JagLuc92} and Nagai \cite{Nagai95}.

For the doubly parabolic case in two space dimensions the situation is analogous, but here the available results are much later and less complete, see Mizoguchi \cite{mizoguchi13} and references therein.
In this context one may argue that the doubly parabolic case is substantially more difficult than the parabolic-elliptic one. We refer again to Section \ref{S9} for more detailed overview of the known results. 

In the one-dimensional case, the standard diffusion is strong enough to give the global existence; on the other hand, for $d >2$ it is too weak.  In this context it is mathematically interesting to find, for a fixed space dimension $d$, a \emph{critical} diffusive operator that sits on the borderline of the blowup and global-in-time regimes. There are at least two approaches to this problem, both justified from the point of view of applications. One is to consider the semilinear diffusion $\nabla \cdot (\mu ( u) \nabla u)$, see for instance Bedrossian, Rodriguez \& Bertozzi \cite{Bertozzi}, Blanchet, Carrillo \& Lauren\c{c}ot \cite{blanchet2009critical},  Cie\'slak \& Stinner \cite{CieslakStinner}, Burczak, Cie\'slak \& Morales-Rodrigo \cite{BurczakCieslak}, Cie\'slak \& Lauren\c{c}ot \cite{CieslakLaurencot} ad Tao \& Winkler \cite{TaoWinkler}. Another one is to replace the standard diffusion with the fractional one. In such a case there is a strong evidence that the half-laplacian $\Lambda =\sqrt{-\Delta}$ is especially worth studying; for more on this, see Subsection \ref{ssec:priorF}.

We focus on the latter approach and one dimension. 

Let us mention here that the logistic term generally helps the global existence, see  Tello \& Winkler \cite{TelloWinkler}, Winkler \cite{Winkler4}, Burczak \& Granero-Belinch\'on \cite{BG}. However, in view of our interest in large-time behavior of solutions to  \eqref{eqa1}-\eqref{eqa2}, we include the logistic term in our considerations here mainly due to the context in which it appears in \cite{Hillen4}, namely the spatio-temporal chaos.

Apart from the outlined mathematical interest in  fractional diffusion systems, it is also believed that they can be useful for modelling certain feeding strategies. For studies on microzooplancton, compare  Klafter, Lewandowsky \& White \cite{Klaf90} and Bartumeus, Peters, Pueyo, Marras{\'e} \& Catalan \cite{Bart03}; on amoebas -- Lewandowsky, White \& Schuster \cite{Lew_nencki}, flying ants -- Shlesinger \& Klafter \cite{Shl86}, fruit flies -- Cole \cite{Cole}, and jackals -- Atkinson, Rhodes, MacDonald \& Anderson \cite{Atk}.

Of course our system  \eqref{eqa1}-\eqref{eqa2} is merely motivated by the applications in biology and not directly applicable, since it concerns one space dimensions and no boundary. Nevertheless, let us remark here that most of the analysis in this paper can be carried out in the case where the domain is the real line. In particular, Theorems \ref{localexistence}, \ref{continuation}, \ref{globalwiener}, \ref{globalpp1}, \ref{globalpp1abs}, \ref{smoothingeffect} and also part of Theorem \ref{globalpp2} (when $\alpha>1$) can be adapted to the real line in a straightforward way.

\subsection{Plan of the paper and overview of our results}
In Section \ref{S9} we present in more details the known results on Keller-Segel-type systems. Section \ref{ssub:prel} introduces basic notation, function spaces and our notion of solution. Next, we provide precise statements of our main results as well as some additional remarks in Section \ref{S1}. 

The following sections contain proofs of our statements.

In particular, in Section \ref{S2} we prove local existence of solutions to \eqref{eqa1}-\eqref{eqa2}, while in Section \ref{S2b} we show continuation criteria. 
 
 Next, in Sections \ref{S10} and \ref{S3}, we study the global-in-time existence. More precisely, we prove global existence of regular solutions in the hypoviscous case $\alpha=1$, provided an explicit smallness condition for initial data holds and $\mu > 1$. This result holds for $r=0$.  Moreover, for arbitrary smooth initial data, we show global existence  of 
 \begin{itemize}
 \item weak solutions  in the subcritical case $\alpha>1$ for  $r \ge 0$ and in both critical and subcritical case $\alpha \ge 1$ for   $r > 0$,
 \item strong solutions in the subcritical case $\alpha>1$ with either $\beta\geq\alpha/2$ and $r \ge 0$ or with $\beta\geq0$ and $r > 0$.
 \end{itemize}
 
 Section \ref{pf:abs} provides results on the absorbing set in the case $\beta \ge \alpha >1$, $r>0$.
 
  In Section \ref{S5} we study the smoothing properties of the systems \eqref{eqa1}-\eqref{eqa2}, including an instantaneous gain of analyticity of the solution to \eqref{eqa1}-\eqref{eqa2}.
 
 In Sections \ref{S6} and \ref{S8} we study existence of an attractor and the dynamical properties of \eqref{eqa1}-\eqref{eqa2} (for parameters $\alpha,\beta$ large enough).  The solution in a neighborhood of this attractor develops a number of \emph{peaks} that eventually merge with each other while other peaks emerge, see Figure \ref{evolution}. We are able to bound from above the number of these peaks analytically.

\begin{figure}[t!]
    \centering
    \includegraphics[scale=0.36]{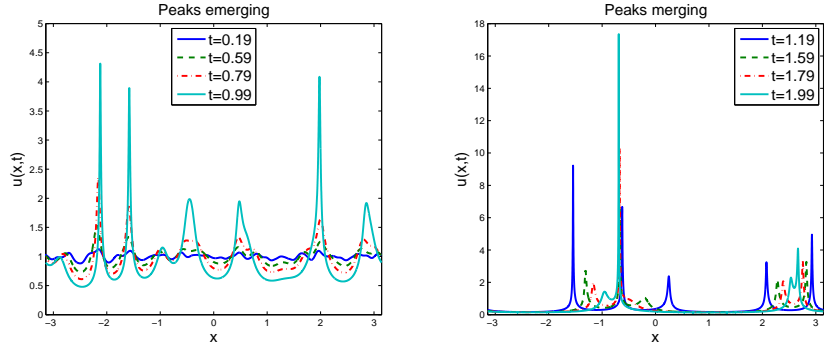}
\caption{Evolution in the case $\alpha=\beta=1$.}
\label{evolution}
\end{figure}

The aforementioned results are presented in Figure \ref{scheme}.

\begin{figure}[t!]
    \centering
    \includegraphics[scale=0.36]{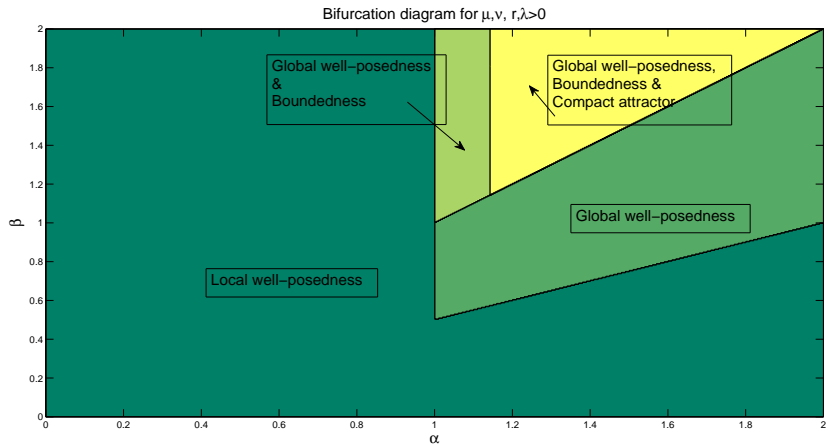}
\caption{Scheme assuming $\mu,\nu,r,\lambda>0$.}
\label{scheme}
\end{figure}

 Finally, in Section \ref{S7}, we perform a numerical study of \eqref{eqa1}-\eqref{eqa2} and provide numerical evidence of the finite time blow up in the case $\alpha=0.5$, $\beta=1$.

 \subsection{Novelties}
To the best of our knowledge, there are not many regularity results for the doubly parabolic fractional Keller-Segel system. Hence the generalization of the parabolic-elliptic global existence results of \cite{bournaveas2010one},  \cite{BG}, \cite{escudero2006fractional} to the system  \eqref{eqa1}-\eqref{eqa2}, even with $\beta = 2$ (the standard chemotactic term) and $r=0$, appears to be new. In particular, we prove global existence and boundedness of classical solutions with no restriction on size of initial data in the subcritical regime $\alpha >1$ and with a restriction in the critical case $\alpha =1$. The restriction of the latter result  is explicit and of the same order as the other parameters present in the system. In its proof we use the Wiener's algebra approach, which seems to be new in the Keller-Segel context. Nevertheless, we must admit that our smallness condition is quite stringent in the sense that it affects the entire Wiener's algebra norm (as opposed to merely the initial mass, for instance). 

The dynamical properties of the system are only known when $\alpha=\beta=2$, as far as we know. Moreover, the bound on the number of peaks seems new even in the classical $\alpha=\beta=2$ case.

\section{Some prior results}\label{S9}
Let us now present some literature concerning the Keller-Segel-type systems, in addition to that mentioned in subsection \ref{ssec:mot}.

\subsection{Keller-Segel system with classical diffusion}
There is a huge literature on the mathematical study of \eqref{eqa1local}-\eqref{eqa2local} and its parabolic-elliptic counterpart ($\tau=0$). Consequently, the list below is far from being exhaustive.
  
The global existence of solutions to \eqref{eqa1local}-\eqref{eqa2local} have been proved (under certain conditions) by many authors. In particular, Kozono \& Sugiyama \cite{KozonoSugiyama3} showed the global existence and decay of solutions to \eqref{eqa1local}-\eqref{eqa2local}, corresponding to small initial data in $d=3$ and with $1<r<1.5$ (see also \cite{KozonoSugiyama2}). Biler, Guerra \& Karch \cite{Bilerparabolicparabolic} recently proved that for every finite Radon measure there exist $\tau_0$ and a global in time mild solution for \eqref{eqa1local}-\eqref{eqa2local} with $\tau>\tau_0$. Corrias, Escobedo \& Matos \cite{CorriasEscobedo} proved that if the initial data $(u_0,v_0)$ is small in $L^1(\RR^2)\times \dot{H}^1(\RR^2)$ there exists a global solution. This result was recently generalized by Cao \cite{XCao}. Osaki \& Yagi \cite{OsakiYagi} and Osaki, Tsujikawa, Yagi \& Mimura \cite{Osaki2002119} obtained the existence of an exponential attractor while Hillen \& Potapov \cite{Hillen2}, using different techniques, also showed the global existence of solutions. 

Tello \& Winkler  \cite{TelloWinkler} proved the global existence of weak solutions for the parabolic-elliptic case with logistic term ($\tau=0,$ $r>0)$ for arbitrary $0\leq u_0\in L^\infty$; see also \cite{TelloWinkler2}. Winkler \cite{Winkler4} showed that there exists a global in time solution for the doubly parabolic case with a sufficiently strong logistic parameter $r$. He also obtained global weak solutions and studied the regularizing properties starting from merely $u_0\in L^1$ initial data in \cite{Winkler3}. Some finite time singularities results for solutions corresponding to certain initial data can be found in \cite{Winkler2}, \cite{Winkler} by Winkler.

\subsection{Spatio-temporal chaos}
A remarkable feature of the model \eqref{eqa1}-\eqref{eqa2} is its spatio-temporal chaotic behavior. In particular, the numerical solutions reported Painter \& Hillen \cite{Hillen4} for the system \eqref{eqa1local}-\eqref{eqa2local} develop a number of \emph{peaks} that emerge and, eventually, mix with other peaks. These peaks are maxima of $u,v$ that are very close to a region with their slope bigger than one. This phenomenon materializes in the numerical study of the system \eqref{eqa1}-\eqref{eqa2} with different values of $\alpha,\beta<2$ (see Figure \ref{evolution} for the case $\alpha=\beta=1$ and Section \ref{S7}.). As noted by Winkler in \cite{Winkler4}, the dynamical features of Keller-Segel models in high dimensions, in particular the existence of global attractors and bounded solutions, is an important topic. 

\subsection{Non-standard diffusions}\label{ssec:priorF}
The case of a nonlinear diffusion has been studied by several authors. See for instance Bedrossian \& Rodriguez \cite{BedNan}, Bedrossian, Rodriguez  \& Bertozzi \cite{Bertozzi}, Blanchet, Carrillo \& Lauren\c{c}ot \cite{blanchet2009critical} and Burczak, Cie\'slak \& Morales-Rodrigo \cite{BurczakCieslak}. 

The case of fractional powers of Laplacian instead of local derivatives in the first equation ($\alpha \in (0,2)$ and $\beta=2$) has been addressed by several authors. 

In particular, for the parabolic-elliptic case, Escudero \cite{escudero2006fractional} proved the boundedness of solutions in the one dimensional case with $\alpha>1$, while Li, Rodrigo \& Zhang \cite{li2010exploding} proved finite time singularities by constructing a particular set of initial data showing this behaviour. These authors also proved that any $L^1_tL^\infty_x$ bounded solution is global (see also \cite{AGM}). Bournaveas \& Calvez \cite{bournaveas2010one} studied the one-dimensional case with $0<\alpha<1$ and obtained the finite time blowup of solutions corresponding to big initial data and global solutions corresponding to small initial data. They also prove global existence for small data in the case $\alpha=1$, but here the problem of behavior of solutions emanating from large data remained an open question. Recently it was addressed in \cite{BurGraMEET} by the authors, where global-in-time smoothness without any smallness assumptions is proved.
In the context of the parabolic-elliptic case and similar problems, see also Ascasibar, Granero-Belinch\'on \& Moreno \cite{AGM}, Granero-Belinch\'on \& Orive \cite{GO}, and \cite{BG} by the authors. 

For the parabolic-elliptic system with fractional diffusion in the equation for $v$, compare Biler \& Karch \cite{BilKar10}.

The doubly parabolic case with fractional operators has been addressed by Biler \& Wu \cite{BilerWu} and Wu \& Zheng \cite{WuZheng}. In particular, these authors proved local existence of solutions, global existence of solutions for initial data satisfying some smallness requirements and ill-posedness in a variety of Besov spaces.

\section{Preliminaries}\label{ssub:prel}
Here we gather some basic terms used in what follows. We define
$$
\langle f \rangle=\frac{1}{|\TT|}\int_{\TT}f(x)dx, \quad \TT=[-\pi,\pi].
$$

\subsection{Singular integral operators and functional spaces}
We write $H$ for the Hilbert transform and $\Lambda =\sqrt{-\Delta}$, \emph{i.e.}
$$
\widehat{Hu}(k)=-i \,sgn(k) \, \hat{u}(k), \text{ and }\widehat{\Lambda^s u}(k)=|k|^s\hat{u}(k),
$$
where $\hat{\cdot}$ denotes the usual Fourier transform. Notice that $\Lambda=\pax H$  in one dimension and $\widehat{Hu}(0)=0$. The differential operator $\Lambda^{s}=(\sqrt{-\pax}^2)^{s}$ is defined by the action of the following kernels (see \cite{cor2} and the references therein):
\begin{equation} \label{lambda gamma}
\Lambda^{s} f(x)=c_{s}\;\text{p.v.} \int_{\mathbb{T}} \frac{f(x)-f(y)}{|x-y|^{1+s}}dy+c_{s}\sum_{k\in\mathbb{Z}\setminus\{0\}} \int_{\mathbb{T}} \frac{f(x)-f(y)}{|x-y+2k\pi|^{1+s}}dy,
\end{equation}
where $c_{s}>0$ is a normalization constant. In particular, in one dimension for $s=1$
\[
\Lambda f(x)=\frac{1}{2\pi} \; \text{p.v.} \int_{\mathbb{T}} \frac{f(x)-f(y)}{\sin^{2}\left((x-y)/2\right)}dy.
\]
\begin{remark}
Notice that given $v\in L^2(\TT)$, since $\langle H v \rangle=0$,
$$
\Lambda^{\beta-1} H v\in L^2(\TT)
$$
even if $\beta<1$.
\end{remark}

We write $H^s$ for the usual $L^2$-based Sobolev spaces with the norm
$$
\|f\|_{H^s}^2=\|f\|_{L^2}^2+\|f\|_{\dot{H}^s}^2, \quad \|f\|_{\dot{H}^s}=\|\Lambda^s f\|_{L^2}.
$$ 

The Wiener's algebra is defined as
\begin{equation}\label{11}
A(\TT)=\{\text{periodic functions }f\text{ such that }\hat{f}\in l^1\},
\end{equation}
\emph{i.e.}, the set of functions with absolutely convergent Fourier series. For a periodic function $u$, we define the Wiener's algebra-based seminorms:
$$
|u|_s=\sum_{k\in \ZZ}|k|^s|\hat{u}(k)|.
$$

\subsection{Sobolev embeddings and their constants}
Along the paper we are going to use different forms of Sobolev embedding (all of them classical). For the sake of clarity, we collect here these inequalities (and denote their constants) that are more often used. Assuming $\alpha>1$, we have for a function $g$ and a zero-mean value function $f$
\begin{equation}
\begin{aligned}
\|f\|_{L^{2/(\alpha-1)}}&\leq C^1_{SE}(\alpha)\|\Lambda^{1-\alpha/2}f\|_{L^2},\\
\|g\|_{L^{\infty}}&\leq C^2_{SE}(\alpha)\|g\|_{H^\frac{\alpha}{2}},\\
\|f\|_{L^{2+\frac{2\alpha-2}{2-\alpha}}}&\leq C^3_{SE}(\alpha)\|\Lambda^{(\alpha-1)/2}f\|_{L^2},\\
\|f\|_{W^{\frac{\alpha}{2} ,\infty}}&\leq C^4_{SE}(\alpha)\|f\|_{\dot{H}^{\alpha}}.
\end{aligned}
\label{C_SE2}
\end{equation}
\subsection{Notation}
We write $T_{max}$ for the maximum lifespan of the solution. 

For a given initial data $(u_0,v_0)$, we define
$$
\mathcal{N}=\max\{\|u_0\|_{L^1(\TT)},2\pi\}.
$$

\subsection{A notion of solution}

Let $u_0(x),v_0(x)\geq0$ be the initial data for the system \eqref{eqa1}-\eqref{eqa2}. Then we define its solution as follows

\begin{defi}\label{defipp}
Let $0<T<\infty$ be a positive parameter. The couple \[(u,v)\in L^\infty([0,T],L^2(\TT))\times L^\infty([0,T],H^{\beta/2}(\TT))\] is \emph{a solution of \eqref{eqa1}-\eqref{eqa2}} if
$$
\int_0^T\int_\TT [\pat\phi-\mu\Lambda^\alpha\phi]u+\pax\phi(u\Lambda^{\beta-1} H v) +\phi ru(1-u) \, dxdt - \int_\TT \phi(x,0) u_0 \, dx=0,
$$
$$
\int_0^T\int_\TT [\pat\varphi -\nu\Lambda^\beta\varphi-\lambda\varphi]v+\varphi u \,dxdt - \int_\TT \varphi(x,0) v_0 \, dx=0,
$$
for all test functions $\phi,\varphi \in C^\infty_c([0,T)\times\TT)$.
\end{defi}

\begin{defi}\label{defiglobal}
If a solution $(u,v)$ verifies the previous definition for every $0<T<\infty$, this solution is called \emph{a global solution}.
\end{defi}
Observe, that our notion of a global solution does not involve $T= \infty$.
In particular, our global solution may \emph{a priori} become arbitrarily large as time tends to infinity.

\section{Statement of results}\label{S1}

\subsection{Local-in-time existence, regularity and continuation criteria}
First, we have the following result
\begin{teo}\label{localexistence}
Let $s\geq3$, $\mu, \nu>0$ and $0< \beta,\alpha\leq 2$. If $(u_0,v_0)\in H^s(\TT)\times H^{s+\beta/2}(\TT)$ is the initial data for equation \eqref{eqa1}-\eqref{eqa2}, then it admits a unique solution 
$$
0\leq u\in C([0,T_{max}(u_0,v_0)],H^s(\TT))\cap L^2([0,T_{max}(u_0,v_0)],H^{s+\alpha/2}(\TT)),
$$
$$
0\leq v\in C([0,T_{max}(u_0,v_0)],H^{s+\beta/2}(\TT))\cap L^2([0,T_{max}(u_0,v_0)],H^{s+\beta}(\TT)).
$$
\end{teo}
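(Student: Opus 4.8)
The plan is to construct the solution by a regularization-and-compactness argument and to control everything through a single $H^s\times H^{s+\beta/2}$ energy functional. First I would introduce an approximating family, e.g. a spectral Galerkin truncation $P_N$ onto the first $N$ Fourier modes (a mollification $\mathcal{J}_\epsilon\ast$ of the data and nonlinearity works equally well). For each fixed parameter the system becomes a Banach-space ODE, so Picard--Lindel\"of yields a unique smooth approximate solution $(u_N,v_N)$ on a short interval; the real task is to prove \emph{uniform} bounds.

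The heart of the argument is the a priori energy estimate. I would test the first equation with $\Lambda^{2s}u$ and the second with $\Lambda^{2s+\beta}v$ (and with $u$, resp. $v$, for the zeroth-order part of the norm). The diffusions produce the coercive terms $\mu\|\Lambda^{s+\alpha/2}u\|_{L^2}^2$ and $\nu\|\Lambda^{s+\beta}v\|_{L^2}^2$, which account for exactly the claimed $L^2_t$-gain of $\alpha/2$, resp. $\beta/2$, derivatives. The logistic contribution is harmless: using $\|\Lambda^s(u^2)\|_{L^2}\lesssim\|u\|_{L^\infty}\|\Lambda^s u\|_{L^2}$ it is bounded by $C(1+\|u\|_{L^\infty})\|u\|_{H^s}^2$. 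The chemotactic term is the only delicate one. Setting $w=\Lambda^{\beta-1}Hv$ and splitting $\pax(uw)=w\,\pax u+u\,\pax w$, the top-order piece $\int w\,\pax\Lambda^s u\,\Lambda^s u\,dx=-\tfrac12\int \pax w\,(\Lambda^s u)^2\,dx$ is controlled by $\|\pax w\|_{L^\infty}=\|\Lambda^\beta v\|_{L^\infty}\lesssim\|v\|_{H^{s+\beta/2}}$, where $s\ge3$ and $\beta\le2$ guarantee $s-\beta/2>1/2$ and hence the needed embedding; the commutator $[\Lambda^s,w]\pax u$ is handled by a Kato--Ponce inequality, and the norm $\|\Lambda^s w\|_{L^2}=\|v\|_{\dot H^{s+\beta-1}}$ it requires is finite precisely because $\beta\le2$ forces $\beta-1\le\beta/2$.

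The genuinely coupling term is $\int\Lambda^s(u\,\pax w)\,\Lambda^s u\,dx$, whose worst constituent after the Leibniz expansion is $\int u\,\Lambda^{s+\beta}v\,\Lambda^s u\,dx$ (recall $\pax w=\Lambda^\beta v$). Since $\Lambda^{s+\beta}v$ sits one half-derivative above the base regularity of $v$, it cannot be absorbed by the $H^{s+\beta/2}$-norm; instead I would bound it by $\tfrac{\nu}{4}\|\Lambda^{s+\beta}v\|_{L^2}^2+C\|u\|_{L^\infty}^2\|\Lambda^s u\|_{L^2}^2$ and send the first term into the $v$-dissipation. This is the crux of the proof and, I expect, the main obstacle: the estimate closes only because the diffusion in the $v$-equation furnishes exactly the $\Lambda^{s+\beta}v$ control that the coupling demands, and because $\beta\le2$ keeps $\Lambda^{\beta-1}Hv$ subordinate to the $v$-norm. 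Collecting the estimates yields, with a polynomial $P$, a differential inequality $\frac{d}{dt}\big(\|u\|_{H^s}^2+\|v\|_{H^{s+\beta/2}}^2\big)+\text{(dissipation)}\le P\big(\|u\|_{H^s}^2+\|v\|_{H^{s+\beta/2}}^2\big)$, so comparison with the associated ODE gives $T_{max}>0$ and uniform bounds, while integrating the dissipation delivers the $L^2_tH^{s+\alpha/2}$ and $L^2_tH^{s+\beta}$ memberships.

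With uniform bounds in hand I would pass to the limit: the time derivatives $\pat u_N,\pat v_N$ are bounded in a negative-index space by the equations, so Aubin--Lions--Simon gives strong convergence of a subsequence in $C([0,T];H^{s-1})$, enough to identify the limit of the quadratic nonlinearity and to verify Definition \ref{defipp}; continuity with values in the top space and the parabolic-gain regularity then follow from the equation by a standard bootstrap. Uniqueness I would obtain by testing the difference of two solutions against itself at the $L^2$ level: since both solutions are bounded in $H^s$ with $s\ge3$ (hence $u,v,\pax w\in L^\infty$), the nonlinear remainders are linear in $\|u_1-u_2\|_{L^2}^2+\|v_1-v_2\|_{L^2}^2$, and Gr\"onwall closes the argument. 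Finally, nonnegativity is preserved: for the regular solution one tests with the negative parts and invokes the C\'ordoba--C\'ordoba inequality $\int u_-\Lambda^\alpha u\,dx\ge0$ (together with the nonnegative source $u$ and data $u_0,v_0\ge0$) to deduce $\|u_-\|_{L^2}\equiv\|v_-\|_{L^2}\equiv0$ by Gr\"onwall.
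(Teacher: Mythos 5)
Your proposal is correct and follows essentially the same route as the paper: the same $H^s\times H^{s+\beta/2}$ energy functional, the same identification of $\int u\,\Lambda^{s+\beta}v\,\Lambda^s u\,dx$ as the critical coupling term absorbed into the $\nu$-dissipation via Young's inequality, mollified/truncated approximations with Picard iteration, and an $L^2$-level Gr\"onwall argument for uniqueness. The only (inessential) deviation is the positivity proof, where you test against negative parts and use the C\'ordoba--C\'ordoba inequality while the paper evaluates the equation pointwise at the spatial minimum; both arguments are standard and valid here.
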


Next, we prove the following continuation criteria, slightly stronger than the condition in Lemma 2.1 of \cite{Winkler}
\begin{teo}\label{continuation}
Assume that, for a finite time $T$ and initial data $(u_0,v_0)\in H^s(\TT)\times H^{s+\beta/2}(\TT),$ $s\geq3$, the solution to \eqref{eqa1}-\eqref{eqa2} satisfies
$$
\int_0^T\|\Lambda^\beta v(s)\|_{L^\infty}+\|\pax u (s)\|_{L^\infty} ds<\infty,
$$
then this solution can be continued up to time $T+\delta$ for a small enough $\delta>0.$ Moreover, if $\mu\geq\frac{1}{2\nu}$, $\alpha\geq\beta$, then, the previous condition may be replaced by
$$
\int_0^T\|u(s)\|^2_{L^\infty}+\|\Lambda^\beta v(s)\|_{L^\infty}+\|u(s)\|_{L^\infty}\|\Lambda^\beta v(s)\|_{L^\infty}ds<\infty.
$$
\end{teo}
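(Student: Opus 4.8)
The plan is to reduce the statement to an a priori bound and then invoke local existence. By Theorem \ref{localexistence}, it suffices to show that under the stated time-integral condition the top-order norm $\|u(t)\|_{H^s}^2 + \|v(t)\|_{H^{s+\beta/2}}^2$ stays finite as $t \uparrow T$; one then restarts the Cauchy problem with data $(u(T),v(T)) \in H^s(\TT) \times H^{s+\beta/2}(\TT)$ and gains the extra $\delta>0$. So the whole proof is a Gr\"onwall argument for a single energy $E(t)=\|u\|_{H^s}^2+\|v\|_{H^{s+\beta/2}}^2$. First I would differentiate the two energies: testing \eqref{eqa1} against $\Lambda^{2s}u$ and \eqref{eqa2} against $\Lambda^{2s+\beta}v$ produces the good dissipations $-\mu\|\Lambda^{s+\alpha/2}u\|_{L^2}^2$ and $-\nu\|\Lambda^{s+\beta}v\|_{L^2}^2$ (plus the harmless $-\lambda$ term). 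The logistic contribution $r\int\Lambda^s(u(1-u))\Lambda^s u$ is handled by a Kato--Ponce bound on $\Lambda^s(u^2)$, giving $\lesssim r\|u\|_{L^\infty}\|\Lambda^s u\|_{L^2}^2$ with an $L^1_t$ coefficient in either regime; the coupling term $\int \Lambda^s u\,\Lambda^{s+\beta}v$ from the $v$-equation is split by Young so that $\tfrac\nu4\|\Lambda^{s+\beta}v\|_{L^2}^2$ is absorbed by the $v$-dissipation and only $\|\Lambda^s u\|_{L^2}^2$ is left for Gr\"onwall.

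Then comes the core: the chemotactic term $\int \Lambda^s\pax(u\Lambda^{\beta-1}Hv)\,\Lambda^s u$. Writing $w=\Lambda^{\beta-1}Hv$ and using $\pax w=\Lambda^\beta v$, a commutator splitting isolates the top-order piece $\tfrac12\int(\pax w)(\Lambda^s u)^2=\tfrac12\int(\Lambda^\beta v)(\Lambda^s u)^2$, bounded by $\tfrac12\|\Lambda^\beta v\|_{L^\infty}\|\Lambda^s u\|_{L^2}^2$ --- this is the Beale--Kato--Majda-type term, linear in $\|\Lambda^\beta v\|_{L^\infty}$, which is exactly what the criterion controls. The commutator remainder $[\Lambda^s,w]u$ carries one fewer derivative on $w$; rather than integrating by parts (which for $\alpha<2$ would create the uncontrollable order $s+1$ on $u$), I would pair it against the $u$-dissipation through a fractional splitting $\int\pax([\Lambda^s,w]u)\Lambda^s u=\int \Lambda^{1-\alpha/2}H([\Lambda^s,w]u)\,\Lambda^{s+\alpha/2}u$, Young into $\tfrac\mu4\|\Lambda^{s+\alpha/2}u\|_{L^2}^2$, and bound $\|\Lambda^{1-\alpha/2}[\Lambda^s,w]u\|_{L^2}$ by a fractional Kato--Ponce commutator estimate in terms of $\|\pax w\|_{L^\infty}=\|\Lambda^\beta v\|_{L^\infty}$ times the sub-dissipative norm $\|\Lambda^{s-\alpha/2}u\|_{L^2}$, plus $\|\pax u\|_{L^\infty}$ times a lower $v$-norm (the latter absorbed into the $v$-dissipation). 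This leaves a coefficient $\lesssim \|\Lambda^\beta v\|_{L^\infty}+\|\pax u\|_{L^\infty}$ in front of $E$, which is $L^1_t$ by hypothesis, so Gr\"onwall closes the first criterion.

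For the second criterion one no longer has $\|\pax u\|_{L^\infty}$ and must estimate the same pieces with only $\|u\|_{L^\infty}$. Keeping the product $u\,\Lambda^\beta v$ together, a Kato--Ponce bound produces the two dangerous contributions $\|u\|_{L^\infty}\|\Lambda^{s+\beta}v\|_{L^2}\|\Lambda^s u\|_{L^2}$ and $\|\Lambda^\beta v\|_{L^\infty}\|\Lambda^s u\|_{L^2}^2$. The second is again the admissible BKM term; the first must be absorbed \emph{simultaneously} into both dissipations. Here the structural hypotheses enter: $\alpha\ge\beta$ places the accompanying $u$-derivatives (of order $\le s+\beta/2\le s+\alpha/2$) within reach of the $u$-dissipation, while the quantitative condition $\mu\ge\tfrac1{2\nu}$ is precisely what lets a combined Young inequality send $\tfrac\mu2\|\Lambda^{s+\alpha/2}u\|_{L^2}^2+\tfrac\nu2\|\Lambda^{s+\beta}v\|_{L^2}^2$ into the two dissipations and leave only the permitted coefficients $\|u\|_{L^\infty}^2$ and $\|u\|_{L^\infty}\|\Lambda^\beta v\|_{L^\infty}$ (both $L^1_t$ by assumption) multiplying $E$. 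Gr\"onwall again yields the bound and hence the continuation.

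I expect the genuine obstacle to be the chemotactic commutator with fractional diffusion: obtaining the linear, $L^\infty$-level dependence on $\|\Lambda^\beta v\|_{L^\infty}$ for the top-order part while keeping the remainder strictly below the dissipation threshold for \emph{all} $0<\alpha\le2$ (which forces the fractional splitting in place of plain integration by parts), and, in the second regime, executing the two-sided absorption that is exactly what makes $\mu\ge\tfrac1{2\nu}$ and $\alpha\ge\beta$ necessary.
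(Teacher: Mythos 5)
Your overall architecture (a priori bound on the top-order energy, then restart via Theorem \ref{localexistence}) is the paper's, and your treatment of the first criterion is close in spirit: the paper also extracts the transport-type term $\tfrac12\int\Lambda^\beta v\,(\pax^3u)^2$ and controls the remaining Leibniz terms by $\|\Lambda^\beta v\|_{L^\infty}$ and $\|\pax u\|_{L^\infty}$ (working at integer order $s=3$ with Gagliardo--Nirenberg rather than a fractional commutator). Two caveats there. First, several terms (e.g.\ $\int\pax^3u\,u\,\Lambda^{\beta}\pax^3 v\,dx$) carry a factor $\|u\|_{L^\infty}$, and the hypothesis does not make $\|u\|_{L^\infty}^2$ integrable; you need the separate uniform bound $\|u(t)\|_{L^\infty}\le\|u_0\|_{L^\infty}e^{M+rT}$, which the paper obtains by evaluating the equation at the spatial maximum and using the sign of the kernel of $\Lambda^\alpha$ -- your sketch omits this. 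Second, your fractional splitting of the commutator remainder, as written, ends with a Young step against $\tfrac{\mu}{4}\|u\|^2_{\dot H^{s+\alpha/2}}$ and therefore leaves the coefficient $\|\Lambda^\beta v\|^2_{L^\infty}$ in front of the energy; the square is not in $L^1_t$ under the hypothesis. The standard form $[\Lambda^s,\Lambda^{\beta-1}Hv]\pax u$ estimated by Kenig--Ponce--Vega (giving $\|\Lambda^\beta v\|_{L^\infty}\|\Lambda^s u\|_{L^2}+\|\pax u\|_{L^\infty}\|v\|_{\dot H^{s+\beta-1}}$, with $s+\beta-1\le s+\beta/2$ since $\beta\le2$) needs no dissipation at all and keeps every Gr\"onwall coefficient linear in the assumed integrable quantities.

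The real gap is the second criterion. A direct top-order Gr\"onwall cannot close there: the intermediate Leibniz contributions (schematically $\int\Lambda^s u\,\pax^ku\,\Lambda^{\beta}\pax^{s-k}v\,dx$ for $1\le k\le s-1$) require $\|\pax u\|_{L^\infty}$, or interpolations that produce superlinear powers of the energy, and neither is available when only $\|u\|_{L^\infty}^2$, $\|\Lambda^\beta v\|_{L^\infty}$ and their product are integrable. Moreover, your absorption mechanism is not where the hypotheses actually act: the term $\|u\|_{L^\infty}\|\Lambda^s u\|_{L^2}\|\Lambda^{s+\beta}v\|_{L^2}$ is absorbed by the $v$-dissipation alone, for any $\mu,\nu>0$. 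The paper instead proves the second criterion by a reduction: it closes an estimate at the \emph{lower} level $\|\pax^2u\|^2_{L^2}+\|\Lambda^{\beta}\pax^2 v\|^2_{L^2}$, where the only intermediate term $\int\pax^2u\,\pax u\,\Lambda^{\beta}\pax v\,dx$ is handled via $\|\pax f\|_{L^4}^2\le3\|f\|_{L^\infty}\|\pax^2 f\|_{L^2}$ and produces exactly the coefficient $\|u\|_{L^\infty}\|\Lambda^\beta v\|_{L^\infty}$ appearing in the hypothesis; the elevated $v$-energy $\|\Lambda^{\beta}\pax^2v\|_{L^2}$ is fed by $\tfrac1{2\nu}\|\Lambda^{\beta/2}\pax^2u\|^2_{L^2}$, and it is precisely here that $\alpha\ge\beta$ and $\mu\ge\tfrac1{2\nu}$ are used to dominate this source by the $u$-dissipation $\mu\|\Lambda^{\alpha/2}\pax^2u\|^2_{L^2}$. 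Once $u\in L^\infty_tH^2$ is known, Sobolev embedding gives $\int_0^T\|\pax u(s)\|_{L^\infty}\,ds<\infty$ and one concludes by the first criterion. This two-step bootstrap is the missing idea.
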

Hence if $(u,v)$ is a solution showing finite time existence with  $T_{max}$  being its maximum lifespan, then we have
$$
\limsup_{t\rightarrow T_{max}} \|\Lambda^\beta v(t)\|_{L^\infty}+\|\pax u (t)\|_{L^\infty}=\infty.
$$
And, if $\mu\geq\frac{1}{2\nu}$, $\alpha\geq\beta$, the previous equation is replaced by
$$
\limsup_{t\rightarrow T_{max}} \|u(t)\|_{L^\infty}+\|\Lambda^\beta v(t)\|_{L^\infty}=\infty.
$$

Let us emphasize that the above results do not involve any \emph{extra} assumptions on the values of parameters $\alpha,\beta,r,\lambda$. They should be compared with Lemma 1.1 of \cite{Winkler4}.
\subsection{Global-in-time solutions}
\subsubsection{Small data regularity result for $\alpha \ge 1$, $r=0$}
Using the Wiener's algebra approach we obtain a global solution for small, periodic initial data. Recall that the Wiener's algebra is defined as in \eqref{11}.
\begin{teo}\label{globalwiener}Let $(u_0,v_0)\in H^3(\TT)\times H^{3+\beta/2}(\TT)$ and assume $1\leq\beta\leq2\leq1+\alpha$ as well as $r=0,\mu>1$ in the system \eqref{eqa1}-\eqref{eqa2}. Then, if the initial data satisfy
$$
|u_0|_{1}+|v_0 |_{\beta}<\min\{\mu-1,\nu-\langle u_0\rangle,\lambda/2\},
$$
the solution is global and 
$$
|u(t)|_{1}+|v(t) |_{\beta}\leq 
|u_0|_{1}+|v_0 |_{\beta}.
$$
\end{teo}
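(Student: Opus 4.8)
The plan is to run an \emph{a priori} estimate entirely on the Fourier side, controlling the single quantity $X(t):=|u(t)|_1+|v(t)|_\beta$, and then to invoke the local theory together with the continuation criterion to turn this bound into global existence. First I would take $s=3$ in Theorem \ref{localexistence} to produce a unique smooth local solution on $[0,T_{max})$. Since $\|\pax u\|_{L^\infty}\le\sum_k|k||\hat u(k)|=|u|_1$ and $\|\Lambda^\beta v\|_{L^\infty}\le\sum_k|k|^\beta|\hat v(k)|=|v|_\beta$, the first continuation criterion of Theorem \ref{continuation} reduces everything to showing that $\int_0^T X(t)\,dt<\infty$ for every finite $T<T_{max}$; this will follow at once from a uniform bound $X(t)\le X(0)$.

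Next I would write the equations in Fourier variables. Setting $w:=\Lambda^{\beta-1}Hv$, so that $|\hat w(j)|=|j|^{\beta-1}|\hat v(j)|$ and $\hat w(0)=0$ (because $\widehat{Hv}(0)=0$), equations \eqref{eqa1}--\eqref{eqa2} with $r=0$ read $\pat\hat u(k)=-\mu|k|^\alpha\hat u(k)+ik\,\widehat{uw}(k)$ and $\pat\hat v(k)=-(\nu|k|^\beta+\lambda)\hat v(k)+\hat u(k)$. In particular $\pat\hat u(0)=0$, so the mean is conserved, $\hat u(0)\equiv\langle u_0\rangle$; this is where $r=0$ enters. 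Estimating $\frac{d}{dt}|\hat u(k)|\le\mathrm{Re}\big(\overline{\hat u(k)}\,\pat\hat u(k)/|\hat u(k)|\big)$ (a standard Dini-derivative computation, valid away from the zero set of $\hat u(k)$ and extended by approximation), multiplying by $|k|$ and summing, and doing likewise for $v$, I obtain
\[
\frac{d}{dt}|u|_1\le-\mu|u|_{1+\alpha}+N,\qquad \frac{d}{dt}|v|_\beta\le-\nu|v|_{2\beta}-\lambda|v|_\beta+|u|_\beta,
\]
where $N:=\sum_k|k|^2|\widehat{uw}(k)|$ collects the chemotactic contribution.

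The heart of the argument is the nonlinear estimate for $N$. Writing $\widehat{uw}(k)$ as a convolution and using $|k|^2\le(|k-j|+|j|)^2=|k-j|^2+2|k-j||j|+|j|^2$, a term-by-term bound gives
\[
N\le|u|_2\,|v|_{\beta-1}+2|u|_1\,|v|_\beta+|u|_0\,|v|_{\beta+1}.
\]
Now the parameter constraints let me trade every ``low'' seminorm for a dissipative one: since $\alpha\ge1$ and $\beta\le2\le1+\alpha$ one has $|u|_2\le|u|_{1+\alpha}$ and $|u|_\beta\le|u|_{1+\alpha}$; since $\beta\ge1$ one has $|v|_{\beta-1}\le|v|_\beta$ and $|v|_{\beta+1}\le|v|_{2\beta}$; and conservation of mass yields $|u|_0\le\langle u_0\rangle+|u|_1$. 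Crucially, the cross term $2|u|_1|v|_\beta$ is a product of the two tracked seminorms, and I would deliberately charge it against the $-\lambda|v|_\beta$ term rather than against the diffusion. Summing the two inequalities and regrouping, this produces
\[
\frac{d}{dt}X\le|u|_{1+\alpha}\big(1-\mu+|v|_\beta\big)+|v|_{2\beta}\big(\langle u_0\rangle-\nu+|u|_1\big)+|v|_\beta\big(2|u|_1-\lambda\big).
\]

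Finally I would close the estimate by a continuity/bootstrap argument. Since $X(0)<\min\{\mu-1,\nu-\langle u_0\rangle,\lambda/2\}$, as long as $X(t)$ stays below this threshold each of the three parenthesised coefficients is strictly negative (using $|v|_\beta,|u|_1\le X$), whence $\frac{d}{dt}X\le0$ and $X$ cannot in fact increase; a standard maximal-time argument then gives $X(t)\le X(0)$ on all of $[0,T_{max})$. Together with the continuation criterion this forces $T_{max}=\infty$ and yields the claimed monotonicity. I expect the only genuinely delicate points to be the nonlinear estimate for $N$ together with the precise bookkeeping that matches the three dissipative coefficients to the quantities $\mu-1$, $\nu-\langle u_0\rangle$ and $\lambda/2$ in the threshold, and, at a more technical level, the justification of the term-by-term differentiation of the Fourier series and of the Dini-derivative inequality where $\hat u(k)$ vanishes.
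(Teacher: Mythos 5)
Your proposal is correct and follows essentially the same route as the paper: the same Wiener-algebra energy $|u|_1+|v|_\beta$, the same Fourier-side differential inequalities with the convolution bound $|u|_2|v|_{\beta-1}+2|u|_1|v|_\beta+|u|_0|v|_{\beta+1}$ for the nonlinearity, the same use of mass conservation to write $|u|_0\leq\langle u_0\rangle+|u|_1$, and the same closure via the continuation criterion of Theorem \ref{continuation}. The only (immaterial) differences are that you expand $|k|^2\le(|j|+|k-j|)^2$ on the undifferentiated product rather than applying the Leibniz rule first, and that you absorb the positive terms into $|u|_{1+\alpha}$ instead of dominating $-\mu|u|_{1+\alpha}$ by $-\mu|u|_2$.
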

This result has the same flavor as \cite{Bae}, \cite{KozonoSugiyama3}. The case $\alpha=1$ is particularly interesting, because for the case $\alpha>1$, we prove below the existence of global solutions corresponding to arbitrary large initial data. Notice that the constant in the smallness condition depends explicitly on the parameters present in the problem and $\|u_0\|_{L^1}$. 

Theorem \ref{globalwiener} is stated for the case $r=0$. Let us recall, that in \cite{BG} we prove result for a similar, parabolic-elliptic system that involves an interplay between the admissible size of initial data and the logistic parameter $r$.

\subsubsection{Large data regularity result for $\alpha > 1$, $r \ge 0$}
We also have 
\begin{teo}\label{globalpp1}
Let $\mu,\nu>0$, $2\geq\alpha>1$, $r,\lambda\geq0,$ $2\geq\beta\geq\alpha/2$ and the initial data $(u_0,v_0)\in L^2(\TT)\times H^{\beta-\alpha/2}(\TT)$ be given. Then there exists at least one global in time weak solution corresponding to $(u_0,v_0)$ satisfying
$$
u\in L^\infty([0,T],L^2(\TT))\cap L^2([0,T],H^{\alpha/2}(\TT))\quad \forall \;T<\infty,
$$
$$
v\in L^\infty([0,T],H^{\beta-\alpha/2}(\TT))\cap L^2([0,T], H^{3\beta/2-\alpha/2}(\TT)) \quad \forall \;T<\infty.
$$
If, in addition, the initial data $(u_0,v_0)\in H^{k\alpha}(\TT)\times H^{k\alpha+\beta/2}(\TT)$, $k\in\NN$, $k\alpha\geq 3$, then there exists a unique global in time solution corresponding to $(u_0,v_0)$ that enjoys
$$
u\in C([0,T],H^{k\alpha}(\TT)) \quad \forall \;T<\infty,
$$
$$
v\in C([0,T],H^{k\alpha+\beta/2}(\TT)) \quad \forall \;T<\infty.
$$
\end{teo}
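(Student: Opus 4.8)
The plan is to reduce both assertions to a single family of a priori estimates, obtained under the provisional assumption that we already possess a smooth nonnegative solution. Granting these, the regular ($H^{k\alpha}$) solution is produced by combining the local existence of Theorem~\ref{localexistence} (applicable since $k\alpha\ge 3$) with the continuation criterion of Theorem~\ref{continuation}, while the weak solution is obtained by approximating the datum $(u_0,v_0)\in L^2\times H^{\beta-\alpha/2}$ by smooth nonnegative data, solving the regular problem, and passing to the limit. Nonnegativity of $u$ and $v$---guaranteed for the smooth solutions by Theorem~\ref{localexistence} and stable under weak limits---will be used throughout, in particular to identify $\|u\|_{L^1}$ with $\int_\TT u$.

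First I would record the two ``free'' estimates. Integrating \eqref{eqa1} over $\TT$ and using $\int_\TT\Lambda^\alpha u=0$, the vanishing of the flux integral and $u\ge 0$ gives the logistic mass bound
\[
\frac{d}{dt}\|u\|_{L^1}=r\Big(\|u\|_{L^1}-\int_\TT u^2\Big)\le r\Big(\|u\|_{L^1}-\tfrac{1}{2\pi}\|u\|_{L^1}^2\Big),
\]
so $\|u(t)\|_{L^1}$ stays bounded for all time (conserved when $r=0$); call this uniform bound $M$. Next, testing \eqref{eqa1} with $u$ and integrating the drift term by parts (using $\pax\Lambda^{\beta-1}H=\Lambda^\beta$) yields
\[
\frac12\frac{d}{dt}\|u\|_{L^2}^2+\mu\|\Lambda^{\alpha/2}u\|_{L^2}^2=\frac12\int_\TT u^2\,\Lambda^\beta v\,dx+r\|u\|_{L^2}^2-r\int_\TT u^3\,dx,
\]
in which the cubic logistic term has the favorable sign. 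In parallel, applying $\Lambda^{\beta-\alpha/2}$ to \eqref{eqa2} and testing with $\Lambda^{\beta-\alpha/2}v$ gives
\[
\frac12\frac{d}{dt}\|\Lambda^{\beta-\alpha/2}v\|_{L^2}^2+\nu\|\Lambda^{\frac{3\beta}{2}-\frac{\alpha}{2}}v\|_{L^2}^2+\lambda\|\Lambda^{\beta-\alpha/2}v\|_{L^2}^2=\int_\TT\Lambda^{2\beta-\alpha}v\,u\,dx,
\]
whose right-hand side is handled by splitting $2\beta-\alpha=(\tfrac{3\beta}{2}-\tfrac\alpha2)+(\tfrac\beta2-\tfrac\alpha2)$, using $\beta\ge\alpha/2$ so that $\|\Lambda^{\beta/2-\alpha/2}u\|_{L^2}\lesssim\|u\|_{H^{\alpha/2}}$, and absorbing part of it into the $v$-dissipation by Young.

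The heart of the argument is the drift term $\tfrac12\int_\TT u^2\Lambda^\beta v$ in the $u$-balance; this is where subcriticality $\alpha>1$ enters decisively. When $\beta<\alpha$ the factor $\Lambda^\beta v$ cannot be controlled by the regularity of $v$ alone, so I would trade derivatives between $u^2$ and $v$ (or, more robustly, use \eqref{eqa2} to replace $\nu\Lambda^\beta v$ by $u-\pat v-\lambda v$ modulo a modified energy), reducing the leading contribution to a cubic quantity of the type $\int_\TT u^3$. The key point is that, by Gagliardo--Nirenberg in one dimension together with the uniform mass bound $M$,
\[
\int_\TT u^3\,dx\lesssim \|\Lambda^{\alpha/2}u\|_{L^2}^{\frac{4}{\alpha+1}}\,M^{\,3-\frac{4}{\alpha+1}},
\]
and the exponent $\tfrac{4}{\alpha+1}$ is strictly below $2$ precisely because $\alpha>1$; hence this term is absorbed into $\mu\|\Lambda^{\alpha/2}u\|_{L^2}^2$ at the cost of a constant depending only on $M$ and the parameters. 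Combining the $u$- and $v$-balances with suitably chosen weights, absorbing all dissipative contributions, and invoking the mass and $v$-energy bounds for the remaining lower-order terms, I expect to obtain a closed differential inequality of the form $\frac{d}{dt}\mathcal E\le C(1+\mathcal E)$ for $\mathcal E=\|u\|_{L^2}^2+\|\Lambda^{\beta-\alpha/2}v\|_{L^2}^2$, which Gr\"onwall integrates on every $[0,T]$ and which simultaneously delivers the stated $L^2_t$-dissipation bounds. This is the step I expect to be the main obstacle: closing the estimate \emph{globally} for arbitrary data hinges on the sub-quadratic gain from $\alpha>1$, and the borderline exponent at $\alpha=1$ is exactly why the critical case is relegated to the smallness regime of Theorem~\ref{globalwiener}.

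With the a priori bounds in hand, the two conclusions follow by standard machinery. For the weak solution I would set up smooth nonnegative approximations (mollified data), whose uniform bounds in $L^\infty_tL^2\cap L^2_tH^{\alpha/2}$ for $u$ and $L^\infty_tH^{\beta-\alpha/2}\cap L^2_tH^{3\beta/2-\alpha/2}$ for $v$, together with the equations' control of $\pat u,\pat v$ in negative-order spaces, give strong $L^2_{t,x}$ compactness for $u$ via Aubin--Lions; this suffices to pass to the limit in the product $u\,\Lambda^{\beta-1}Hv$ and to recover Definition~\ref{defipp}, with nonnegativity preserved. For the regular solution I would propagate the $H^{k\alpha}$ energy: the basic bounds make $\int_0^T\|\pax u\|_{L^\infty}+\|\Lambda^\beta v\|_{L^\infty}\,ds$ finite on every $[0,T]$ (bootstrapping through the subcritical Sobolev embeddings \eqref{C_SE2}), so Theorem~\ref{continuation} extends the local solution to a global one, and higher-order energy estimates give the claimed continuity in time. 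Uniqueness in this class is obtained by an $L^2$ estimate on the difference of two solutions closed by Gr\"onwall, using the $L^\infty$ bounds available at this level of regularity.
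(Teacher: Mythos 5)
Your overall architecture (mass bound, $L^2$--energy for $u$ coupled with the $\dot H^{\beta-\alpha/2}$--energy for $v$, then mollification plus Aubin--Lions for the weak solution and local existence plus the continuation criterion for the regular one) matches the paper. But the central estimate is not closed, and the ingredient that actually closes it in the paper is absent from your proposal. The paper controls $\|\Lambda^{\beta-\alpha/2}v(t)\|_{L^2}$ \emph{independently of the energy}, by solving the ODE for each Fourier mode $\hat v(k,t)$ (Duhamel) and using only $|\hat u(k,t)|\leq\|u(t)\|_{L^1}\leq\mathcal N$ together with the summability of $\bigl(|k|^{\beta-\alpha/2}/(\nu|k|^\beta+\lambda)\bigr)^2$, valid since $\beta-\alpha/2<\beta-1/2$. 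This matters because the drift term, estimated as you suggest by trading $\alpha/2$ derivatives onto $u^2$ and using Kato--Ponce plus the interpolation \eqref{C_GN}, produces after Young a term of order $\|\Lambda^{\beta-\alpha/2}v\|_{L^2}^{2(1+\delta)/\delta}$ with $\delta=(\alpha-1)/2$, which is \emph{superlinear} in your energy $\mathcal E=\|u\|_{L^2}^2+\|\Lambda^{\beta-\alpha/2}v\|_{L^2}^2$. Hence the advertised inequality $\frac{d}{dt}\mathcal E\leq C(1+\mathcal E)$ does not follow from the ingredients you list, and the coupled Gr\"onwall argument only gives local-in-time control. Your alternative route, substituting $\nu\Lambda^\beta v=u-\pat v-\lambda v$, does not fare better: the resulting term $\int_\TT u^2\pat v\,dx$ forces either a time derivative of $\int u^2v$ (and then, after substituting $\pat u$, terms like $\int uv\Lambda^\alpha u$ requiring $\dot H^\alpha$ dissipation that is unavailable at this level) or is simply left unestimated. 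The Gagliardo--Nirenberg absorption of $\int u^3$ you compute is correct but is applied to a term you have not shown the drift reduces to.

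A secondary, smaller issue: to invoke Theorem \ref{continuation} for the regular solution you need $\int_0^T\|\pax u\|_{L^\infty}+\|\Lambda^\beta v\|_{L^\infty}\,ds<\infty$, which does not follow from the basic bounds $u\in L^2_tH^{\alpha/2}$, $v\in L^2_tH^{3\beta/2-\alpha/2}$; the paper needs two further rounds of energy estimates (reaching $u\in L^2_tH^{3\alpha/2}$ and $v\in L^2_tH^{\beta+\alpha/2}$, with the borderline case $\beta-1+\alpha/2\le\beta/2+\alpha/2$) before the embeddings \eqref{C_SE2} apply. You acknowledge a bootstrap is needed, but since each higher-order step again hinges on the same Duhamel-type control of $v$ and on commutator estimates, this part also remains unproved as written.
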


\subsubsection{Large data regularity result for $\alpha\geq1$, $\beta>0$, $r>0$.} The previous two results did not use additional regularity provided by the logistic term. It turns out that in presence of the logistic damping ($r>0$), for any $\beta>0$ one can obtain global solutions in both the critical case $\alpha=1$ (then weak solutions) and subcritical case $\alpha >1$ (then regular solutions). More precisely, we have
\begin{teo}\label{globalpp2}
Let $\mu,\nu, r>0$, $2\geq\alpha\geq1$, $\lambda\geq0,$ $2\geq\beta>0$ and the initial data $(u_0,v_0)\in L^2(\TT)\times H^{\beta/2}(\TT)$ be given. Then there exists at least one global in time weak solution corresponding to $(u_0,v_0)$ satisfying
$$
u\in L^\infty([0,T],L^2(\TT))\cap L^2([0,T],H^{\alpha/2}(\TT))\quad \forall \;T<\infty,
$$
$$
v\in L^\infty([0,T],H^{\beta/2}(\TT))\cap L^2([0,T], H^{\beta}(\TT)) \quad \forall \;T<\infty.
$$
If, in addition, $\alpha>1$, $\beta \le \alpha$ and the initial data $(u_0,v_0)\in H^{k\alpha}(\TT)\times H^{k\alpha+\beta/2}(\TT)$, $k\in\NN$, $k\alpha\geq 3$, then there exists a unique global in time solution corresponding to $(u_0,v_0)$ that enjoys
$$
u\in C([0,T],H^{k\alpha}(\TT)) \quad \forall \;T<\infty,
$$
$$
v\in C([0,T],H^{k\alpha+\beta/2}(\TT)) \quad \forall \;T<\infty.
$$
\end{teo}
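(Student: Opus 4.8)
The plan is to derive a priori estimates on smooth solutions, which exist by Theorem \ref{localexistence} applied to mollified data, and then to build the weak solution by a compactness argument; the subcritical regularity statement is then obtained by bootstrapping and the continuation criterion of Theorem \ref{continuation}. The whole difficulty sits in the estimates for $u$, and the role of the logistic damping $r>0$ is precisely to make them close for \emph{every} $\beta>0$ down to the critical exponent $\alpha=1$. I would begin with two robust low-order estimates. Since $u_0\geq0$ one checks $u\geq0$, so integrating \eqref{eqa1} over $\TT$ and using that the diffusion and the divergence term have zero mean gives the logistic inequality
$$
\frac{d}{dt}\|u\|_{L^1}=r\|u\|_{L^1}-r\|u\|_{L^2}^2\leq r\|u\|_{L^1}-\frac{r}{2\pi}\|u\|_{L^1}^2,
$$
which bounds $\|u(t)\|_{L^1}$ uniformly and, after reintegration, yields the crucial bound $\int_0^T\|u\|_{L^2}^2\,dt\leq C(1+T)$. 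Testing \eqref{eqa2} with $\Lambda^\beta v$ and absorbing $\int u\Lambda^\beta v$ then produces
$$
\frac{d}{dt}\|\Lambda^{\beta/2}v\|_{L^2}^2+\nu\|\Lambda^\beta v\|_{L^2}^2\leq \tfrac1\nu\|u\|_{L^2}^2,
$$
so that $v\in L^\infty([0,T],H^{\beta/2})\cap L^2([0,T],H^\beta)$ with $\int_0^T\|\Lambda^\beta v\|_{L^2}^2\,dt\leq C(1+T)$; this works for all $0<\beta\leq2$, the operator $\Lambda^{\beta-1}H$ being harmless even for $\beta<1$ by the Remark in Section \ref{ssub:prel}.

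The core step is the $L^2$ estimate for $u$. Testing \eqref{eqa1} with $u$ and integrating the chemotactic term by parts (using $\pax H=\Lambda$) turns it into $\tfrac12\int u^2\Lambda^\beta v$, giving
$$
\frac12\frac{d}{dt}\|u\|_{L^2}^2+\mu\|\Lambda^{\alpha/2}u\|_{L^2}^2+r\|u\|_{L^3}^3=\tfrac12\int u^2\Lambda^\beta v+r\|u\|_{L^2}^2.
$$
I would bound $\tfrac12\int u^2\Lambda^\beta v\leq\tfrac12\|u\|_{L^4}^2\|\Lambda^\beta v\|_{L^2}$ and use the Gagliardo--Nirenberg inequality $\|u\|_{L^4}^2\leq C\|u\|_{L^2}^{2-1/\alpha}\|\Lambda^{\alpha/2}u\|_{L^2}^{1/\alpha}$, valid since $\alpha\geq1$. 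Young's inequality absorbs the $\Lambda^{\alpha/2}u$ factor into the diffusion and, in the critical case $\alpha=1$, leaves exactly $C\|u\|_{L^2}^2\|\Lambda^\beta v\|_{L^2}^2$, so one reaches the Grönwall inequality $\tfrac{d}{dt}\|u\|_{L^2}^2\leq\big(C\|\Lambda^\beta v\|_{L^2}^2+2r\big)\|u\|_{L^2}^2$. This is the main obstacle and its resolution: in the critical case the diffusion alone cannot dominate the chemotactic nonlinearity for large data, and the estimate closes \emph{only} because the logistic term has already forced $\int_0^T\|u\|_{L^2}^2\,dt$ — hence, through the $v$-equation, $\int_0^T\|\Lambda^\beta v\|_{L^2}^2\,dt$ — to be finite, making the Grönwall coefficient time-integrable on $[0,T]$. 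One thus obtains $u\in L^\infty([0,T],L^2)\cap L^2([0,T],H^{\alpha/2})$ for every finite $T$ (with a bound that may grow in $T$, consistent with our notion of global solution).

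With these uniform bounds, I would construct a sequence of smooth solutions to \eqref{eqa1}-\eqref{eqa2} with mollified data, control $\pat u$ and $\pat v$ in suitable negative-order Sobolev spaces directly from the equations, and invoke the Aubin--Lions lemma to upgrade weak convergence to strong $L^2$ convergence. This suffices to pass to the limit in the nonlinear terms $u\Lambda^{\beta-1}Hv$ and $u^2$, producing a weak solution in the sense of Definition \ref{defipp} within the stated spaces.

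For the regularity statement ($\alpha>1$, $\beta\leq\alpha$) I would bootstrap from the weak solution. Since $\alpha>1$ we have $H^{\alpha/2}\hookrightarrow L^\infty$, so the gained $L^2H^{\alpha/2}$ control lets me propagate $H^{k\alpha}$ regularity through successive energy estimates; the restriction $\beta\leq\alpha$ is exactly what guarantees that the top-order contribution of $\pax(u\Lambda^{\beta-1}Hv)$, in which $v$ carries at most $\beta\leq\alpha$ derivatives, is dominated by the $\Lambda^\alpha$ diffusion after integration by parts and interpolation, so each level closes by Grönwall. In particular these estimates verify the continuation criterion of Theorem \ref{continuation}, excluding finite-time blowup and yielding the global solution in $C([0,T],H^{k\alpha})\times C([0,T],H^{k\alpha+\beta/2})$. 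Uniqueness then follows by estimating the $L^2$-norm of the difference of two solutions and applying Grönwall, using the $L^\infty$ bounds on $u$ and $\Lambda^\beta v$ now available from the regularity.
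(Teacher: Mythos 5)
Your strategy coincides with the paper's: the logistic $L^1$ identity and the resulting bound $\int_0^T\|u\|_{L^2}^2\,dt\leq C(1+T)$ are exactly Lemma \ref{L1normr}; the $\dot H^{\beta/2}$ energy estimate for $v$, the $L^2$ estimate for $u$ closed by interpolating the quartic term against the dissipation and exploiting the time-integrability of $\|v\|_{\dot H^{\beta}}^2$, the Aubin--Lions compactness step, and the reduction of the $\alpha>1$, $\beta\leq\alpha$ regularity statement to the bootstrap of Theorem \ref{globalpp1} all match the paper. One cosmetic slip: on $\TT$ the inequality $\|u\|_{L^4}^2\leq C\|u\|_{L^2}^{2-1/\alpha}\|\Lambda^{\alpha/2}u\|_{L^2}^{1/\alpha}$ fails for constants (the right-hand side is a seminorm), so you must first subtract the mean, as the paper does via $\|u-\langle u\rangle\|_{L^4}^2\leq C\|u\|_{L^2}\|u\|_{\dot H^{1/2}}$ together with $\|u\|_{\dot H^{1/2}}\leq\|u\|_{\dot H^{\alpha/2}}$, the mean itself being controlled by Lemma \ref{L1normr}.

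The genuine gap is in the construction of the approximating sequence. You propose to solve \eqref{eqa1}--\eqref{eqa2} itself with mollified data and pass to the limit on a fixed interval $[0,T]$. But Theorem \ref{localexistence} only guarantees existence up to a time $T^\epsilon$ governed by $\|u_0^\epsilon\|_{H^3}+\|v_0^\epsilon\|_{H^{3+\beta/2}}$, which blows up as the mollification parameter vanishes, so $T^\epsilon$ may shrink to zero; and your a priori bounds ($u\in L^\infty L^2\cap L^2H^{\alpha/2}$) are far too weak to feed the continuation criterion of Theorem \ref{continuation} when $\alpha=1$ (or when $\alpha>1$ but $\beta<\alpha/2$, where Theorem \ref{globalpp1} is not available either). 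Hence nothing guarantees that the smooth approximate solutions live on a common time interval. The paper resolves this by regularizing the equations, not merely the data: it adds $\epsilon\pax^2 u^\epsilon$ and $\epsilon\pax^2 v^\epsilon$ to \eqref{eqa1} and \eqref{eqa2}, so that each approximate problem is subcritical and globally solvable by (the argument of) Theorem \ref{globalpp1}, while the Step-1 estimates remain uniform in $\epsilon$; only then does Aubin--Lions apply on all of $[0,T]$ and produce a weak solution in the sense of Definition \ref{defipp}. Your outline needs this (or an equivalent) vanishing-viscosity regularization to be complete in the critical case.
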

\subsubsection{Absorbing set for $r>0$, $\beta \ge \alpha >1$ }

\begin{teo}\label{globalpp1abs}
Let $\mu,\nu>0$, $2\geq\beta \geq\alpha>1$, $\lambda\geq0$, $r>0$ and the initial data $(u_0,v_0)\in H^{k\alpha}(\TT)\times H^{k\alpha+\beta/2}(\TT)$, $k\in\NN$, $k\alpha\geq 3$ be given. Then there exist positive numbers $T^*$, $S(\cdot)$ such that
$$
\|u(t+1)\|^2_{\dot{H}^{k\alpha/2}}\leq S(\dot{H}^{k\alpha/2})\quad\forall\,t\geq T^*, \,0\leq k\alpha.
$$
\end{teo}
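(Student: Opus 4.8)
The plan is to establish the absorbing set by a cascade of energy estimates, starting from the total mass and bootstrapping up to $\dot{H}^{k\alpha/2}$ via the uniform Gronwall lemma, with the logistic damping $ru(1-u)$, $r>0$, providing the mechanism that renders all bounds independent of the initial data. First I would obtain an absorbing set in $L^1$. Integrating \eqref{eqa1} over $\TT$ and using $u\ge0$ together with Jensen's inequality $\int_\TT u^2\,dx\ge\frac{1}{2\pi}(\int_\TT u\,dx)^2$, the total mass $M(t)=\int_\TT u\,dx$ satisfies the logistic differential inequality $M'(t)\le rM-\frac{r}{2\pi}M^2$. Hence $\limsup_{t\to\infty}M(t)\le2\pi$, and there exist a time $T_1$ and a constant $C_1$, both independent of $(u_0,v_0)$, with $\|u(t)\|_{L^1}\le C_1$ for all $t\ge T_1$.

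Next I would build the $L^2$ absorbing set. Multiplying \eqref{eqa1} by $u$, integrating, and integrating by parts in the chemotactic term (using $\pax H=\Lambda$) converts it into $\frac12\int_\TT u^2\Lambda^\beta v\,dx$, yielding
\begin{equation*}
\frac12\frac{d}{dt}\|u\|_{L^2}^2 = -\mu\|\Lambda^{\alpha/2}u\|_{L^2}^2 + \frac12\int_\TT u^2\Lambda^\beta v\,dx + r\|u\|_{L^2}^2 - r\|u\|_{L^3}^3.
\end{equation*}
The cubic damping $-r\|u\|_{L^3}^3$ is the crucial gain. By Young's inequality I absorb both the linear term $r\|u\|_{L^2}^2$ (after interpolating $L^2$ between $L^1$ and $L^3$ and invoking the $L^1$ bound) and the chemotactic term $\frac12\|u\|_{L^3}^2\|\Lambda^\beta v\|_{L^3}$ into $\frac{r}{2}\|u\|_{L^3}^3$, at the cost of an additive $\|\Lambda^\beta v\|_{L^3}^3$ contribution and a constant depending only on the parameters and $C_1$. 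The residual $v$-term is handled by an analogous estimate for \eqref{eqa2}, exploiting its dissipation $\nu\|\Lambda^{\beta/2}\,\cdot\,\|_{L^2}^2$ together with the damping $\lambda v$ and the source $u$; the hypothesis $\beta\ge\alpha$ is what guarantees that the regularity gained by $v$ suffices to bound $\Lambda^\beta v$ in terms of $u$. The outcome is a dissipative inequality $\frac{d}{dt}y\le -cy+C$ with $C$ independent of the data, giving both an absorbing bound $\|u(t)\|_{L^2}^2\le C_2$ for $t\ge T_2$ and, upon integration in time, the uniform control $\int_t^{t+1}\|\Lambda^{\alpha/2}u(s)\|_{L^2}^2\,ds\le C_2'$ for $t\ge T_2$.

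Finally I would bootstrap to $\dot{H}^{k\alpha/2}$ by induction on $k$. At each level I test the equation with $\Lambda^{k\alpha}u$, obtaining
\begin{equation*}
\frac12\frac{d}{dt}\|\Lambda^{k\alpha/2}u\|_{L^2}^2 + \mu\|\Lambda^{(k+1)\alpha/2}u\|_{L^2}^2 = (\text{nonlinear terms}),
\end{equation*}
and estimate the chemotactic and logistic contributions through commutator and product estimates together with interpolation, so as to write the right-hand side as $g(t)\|\Lambda^{k\alpha/2}u\|_{L^2}^2+h(t)$, where $\int_t^{t+1}g$ and $\int_t^{t+1}h$ are bounded uniformly for large $t$ by the absorbing bounds established at lower order. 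The uniform Gronwall lemma then delivers $\|u(t+1)\|_{\dot{H}^{k\alpha/2}}^2\le S$ for all $t\ge T^*$, with $S$ depending only on the norm index and the parameters, which is exactly the claim; the instantaneous regularization of Theorem \ref{smoothingeffect} ensures the solution is smooth enough to justify all these manipulations.

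The main obstacle is closing the $L^2$ estimate and its higher-order analogues: one must absorb the chemotactic cross term $\int_\TT u^2\Lambda^\beta v\,dx$ (and, at higher levels, its commutator counterparts) using only the cubic logistic damping and the diffusive dissipation, while keeping every constant independent of $(u_0,v_0)$. It is precisely the logistic term dominating the quadratic and cubic growth at large amplitude that upgrades an a priori bound into an \emph{absorbing} one, and the coupling between the two equations under $\beta\ge\alpha$ is what makes the bookkeeping delicate.
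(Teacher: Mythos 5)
Your overall architecture ($L^1$ absorbing set from the logistic ODE, then $L^2$, then a uniform--Gronwall bootstrap to $\dot{H}^{k\alpha/2}$) matches the paper's, and your idea of extracting a genuinely dissipative inequality $y'\le -cy+C$ for $y=\|u\|_{L^2}^2$ from the cubic damping via $\|u\|_{L^2}^2\le\|u\|_{L^1}^{1/2}\|u\|_{L^3}^{3/2}$ is a legitimate variant of the paper's route (which instead feeds $\int_t^{t+1}\|u(s)\|_{L^2}^2\,ds\le3\mathcal{N}$, coming from $\frac{d}{dt}\|u\|_{L^1}=r\|u\|_{L^1}-r\|u\|_{L^2}^2$, into the uniform Gronwall lemma). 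However, there is a genuine gap at the point you yourself flag as the main obstacle: the chemotactic term. The estimate $\tfrac12\int_\TT u^2\Lambda^\beta v\,dx\le\tfrac12\|u\|_{L^3}^2\|\Lambda^\beta v\|_{L^3}$ requires an absorbing bound on $\|\Lambda^\beta v\|_{L^3}$, essentially on $\|v\|_{\dot{H}^{\beta+1/6}}$, and this is not obtainable from what is available at that stage. You only control $\|u\|_{L^1}$ uniformly and $\int_t^{t+1}\|u\|_{L^2}^2\,ds$; Duhamel for \eqref{eqa2} with $|\hat{u}(k)|\le\|u\|_{L^1}$ gives $|\hat{v}(k,t)|\lesssim|\hat{v}_0(k)|e^{-(\nu|k|^\beta+\lambda)t}+\mathcal{N}(\nu|k|^\beta+\lambda)^{-1}$, whose tail yields a data-independent bound on $\|v\|_{\dot{H}^{s}}$ only for $s$ up to $\beta-\alpha/2$ (the relevant sum behaves like $\sum|k|^{2(s-\beta)}$, and $\alpha>1$ is exactly what makes $s=\beta-\alpha/2$ admissible); the exponents $\beta$ and $\beta+1/6$ are out of reach. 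Energy estimates for $v$ do not rescue this: testing with $\Lambda^{2\beta}v$ trades the $\|v\|_{\dot{H}^{3\beta/2}}$ dissipation against $\|u\|_{\dot{H}^{\beta/2}}$, which under the standing hypothesis $\beta\ge\alpha$ is \emph{not} dominated by the $\|u\|_{\dot{H}^{\alpha/2}}$ dissipation of the first equation, and invoking that dissipation would in any case be circular before the $L^2$ estimate is closed. Your assertion that ``$\beta\ge\alpha$ is what guarantees that the regularity gained by $v$ suffices to bound $\Lambda^\beta v$'' is backwards: larger $\beta$ makes $\Lambda^\beta v$ harder to reach; in the paper $\beta\ge\alpha$ serves only to dominate $\|v\|_{\dot{H}^{\beta/2}}$ by $\|v\|_{\dot{H}^{\beta-\alpha/2}}$.

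The missing device, which is the heart of the paper's proof, is twofold: (i) integrate by parts so that only $\Lambda^{\beta-\alpha/2}$ lands on $v$, writing $\int_\TT u^2\Lambda^\beta v\,dx=\int_\TT\Lambda^{\alpha/2}(u^2)\,\Lambda^{\beta-\alpha/2}v\,dx$, then use Kato--Ponce together with the interpolation \eqref{C_GN} so that the resulting $\|u\|_{\dot{H}^{\alpha/2}}$ factor is absorbed into the diffusion and only $\|u\|_{L^1}\le\mathcal{N}$ survives; and (ii) obtain the bound on $\|v\|_{\dot{H}^{\beta-\alpha/2}}$, data-independent for $t\ge t_0$, from the explicit mode-by-mode solution \eqref{fourierv} of the linear $v$-equation together with the convergence of $C_{FS}$. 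Your higher-order sketch (commutators, product estimates, uniform Gronwall) is consistent in spirit with the paper's $I_1$--$I_4$ decomposition and Kenig--Ponce--Vega estimates, but it inherits the same issue: the quantity that must be unit-time integrable there is $\|v\|_{\dot{H}^{\beta}}^2$, and this is again supplied by combining the $v$-energy estimate with the data-independent $\dot{H}^{\beta-\alpha/2}$ bound and the previously established $\int_t^{t+1}\|u\|_{L^2}^2\,ds$, not by a direct $L^p$ estimate of $\Lambda^\beta v$. As written, the $L^2$ step does not close, and everything above it depends on it.
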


From Theorem \ref{globalpp1abs} follows in particular that
$$
\limsup_{t\rightarrow\infty}\|u(t)\|^2_{H^{\alpha/2}}\leq S(H^{\alpha/2})
$$
with $S(H^{\alpha/2})$ given by \eqref{SHalpha2}. Furthermore, it implies that  there exists $C$, depending on the parameters present in the problem and on the initial data, such that
$$
\max_{0\leq t<\infty}\{\|u(t)\|^2_{H^{k\alpha/2}}+\|v(t)\|^2_{H^{\beta+(k-1)\alpha/2}}\}\leq C,
$$
hence the solution is globally bounded.

\begin{remark}
We provide (and collect in \ref{AppB}) an estimate for numbers $S(\cdot)$ along the proof of Theorem \ref{globalpp1abs}. We provide them, since part of interest of this paper is to study the dynamical properties of the attractor of \eqref{eqa1}-\eqref{eqa2}. In particular, we believe that an estimate on the radius of the absorbing set and the number of peaks that may emerge is interesting (see below) and it requires a formula for $S(\cdot)$. Let us also notice that the hypothesis $\beta\geq\alpha$ is not required to obtain the existence of $S(L^2)$.
\end{remark}

\subsection{Instantaneous analyticity}
Before we state our result on the smoothing effect, we need some additional notation and definitions.
\subsubsection{Preliminaries}
Let us define
\begin{equation}\label{omega0}
0<\omega_0= \min\left\{\frac{\nu}{3},\frac{\mu}{8}\right\},
\end{equation}
and let $\omega$ be a positive constant that will be fixed later, depending on the parameters present in our problem and on the initial data. We define the (time dependent) complex strip 
$$
\Ss_\omega=\{x+i\xi,\;x\in\TT,\;|\xi|\leq \omega t\},
$$
and
\begin{equation}\label{tildeT}
\tilde{T}=\frac{1+\|u_0\|_{H^{3}(\TT)}^2+\|v_0\|_{H^{4}(\TT)}^2}{3\mathcal{K}},
\end{equation}
where $\mathcal{K}$ is given in \eqref{K}.

The Hardy-Sobolev space for the complex extension of a real function is given by
\begin{equation}\label{8}
H^s(\Ss_\omega)=\{f(z,t),\;z\in\Ss_\omega\; \text{s.t.}\;\|f\|_{H^s(\Ss_\omega)}<\infty \text{ and } f(x,t)\in\RR\},
\end{equation}
with norm

\begin{equation}\label{9}
\|f\|_{H^s(\Ss_\omega)}^2=\int_\TT |f(x\pm i\omega t)|^2dx+\int_\TT |\pax^s f(x\pm i\omega t)|^2dx.
\end{equation}

To motivate the use of these spaces, notice that if the $H^0(\Ss_\omega)=L^2(\Ss_\omega)$ norm of $u$ is bounded, then $u$ is analytic. In order to see this, we can use the Fourier series of $u$:
$$
u(x)=\sum_{k\in\ZZ} \hat{u}(k)e^{ikx}.
$$
Formally, evaluating it at $x=\alpha\pm i\omega t$, we have
$$
u(\alpha\pm i\omega t)=\sum_{k\in\ZZ} \hat{u}(k)e^{ik\alpha} e^{\mp k\omega t}.
$$
Then, since \[ \sum_{k\in\ZZ} |\hat{u}(k) |^2 e^{\mp k\omega t}  =  \int_\TT |u(\alpha\pm i\omega t)|^2 d\alpha < \infty, \] we have that
$\hat{u}(k)$ decays at least as $e^{\pm k\omega t}$. Since exponential decay for the Fourier series implies analyticity, we conclude the claim. For further details, see \cite{bakan2007hardy,BCG,ccfgl,CGO,GH,GNO}.

\subsubsection{Results}
We have

\begin{teo}\label{smoothingeffect}
Given $2\geq \alpha,\beta\geq 1$, $\mu,\nu>0$, $r\geq0$ and the initial data $(u_0,v_0)\in H^3 (\TT) \times H^4 (\TT)$, the solution $(u,v)$  of \eqref{eqa1}-\eqref{eqa2}  becomes real analytic for every $0<t< \tilde{T}$. Furthermore, the complex extension of  $(u,v)$ becomes complex analytic in the growing in time, complex strip $\Ss_\omega$ with $\omega\leq\omega_0$ and we have the bounds
$$
\|u(t)\|_{L^\infty(\Ss_\omega)}\leq \sqrt{2}\|u_0\|_{L^\infty(\TT)}, \;
\|v(t)\|_{L^\infty(\Ss_\omega)}\leq \sqrt{2}\|v_0\|_{L^\infty(\TT)}.
$$
\end{teo}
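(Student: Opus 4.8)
The plan is to prove analyticity by propagating Sobolev regularity onto the growing complex strip $\Ss_\omega$ via an energy estimate, and to control the new terms created by the motion of the strip boundary using the fractional dissipation. Since Theorem \ref{localexistence} already furnishes a smooth local solution $(u,v)$ for $(u_0,v_0)\in H^3(\TT)\times H^4(\TT)$, I would first extend it holomorphically into a thin strip and track
\[
E(t)=\|u\|_{H^3(\Ss_\omega)}^2+\|v\|_{H^4(\Ss_\omega)}^2,
\]
where $\Ss_\omega$ has half-width $\omega t$ with $\omega\le\omega_0$ as in \eqref{omega0}. The heart of the matter is to differentiate $E$ in time. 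Because the evaluation points $x\pm i\omega t$ move, the chain rule gives $\tfrac{d}{dt}f(x\pm i\omega t)=(\pat f\pm i\omega\pax f)(x\pm i\omega t)$, so, beyond the contributions of the PDE itself, one picks up the moving-boundary terms generated by $\pm i\omega\pax f$. These are precisely the terms that the growth of the strip forces us to control.

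Next I would exploit the dissipation. Substituting $\pat u=-\mu\Lambda^\alpha u+\pax(u\Lambda^{\beta-1}Hv)+ru(1-u)$ from \eqref{eqa1} and integrating by parts, the diffusion produces the good negative contributions $-\mu\|\Lambda^{\alpha/2}u\|^2$ and (from \eqref{eqa2}) $-\nu\|\Lambda^{\beta/2}v\|^2$, at each Sobolev level on the strip, while the moving-boundary terms are bounded in modulus by $\omega\|\Lambda^{1/2}u\|^2$ and $\omega\|\Lambda^{1/2}v\|^2$. Since $\alpha,\beta\ge 1$, these can be absorbed: for $\alpha=1$ directly, because $\omega\le\mu/8<\mu$, and for $\alpha>1$ after a Young/interpolation inequality at the cost of lower-order terms; the analogous absorption for $v$ uses $\omega\le\nu/3$. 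This is exactly where the hypotheses $\alpha,\beta\ge 1$ and the choice \eqref{omega0} enter, and it is the structural reason the half-Laplacian is the borderline.

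Then I would treat the nonlinearity $\pax(u\Lambda^{\beta-1}Hv)$ on the strip. Since $H^3(\Ss_\omega)$ is a Banach algebra and, for $\beta\le 2$, the operator $\Lambda^{\beta-1}H$ costs at most one derivative, this term is controlled by products of $\|u\|_{H^3(\Ss_\omega)}$ and $\|v\|_{H^4(\Ss_\omega)}$, its top-order part being absorbed into the dissipation and the remainder bounded by a polynomial in $E$; the logistic term $ru(1-u)$ is handled likewise using the algebra property. Collecting everything yields a differential inequality for $1+E$ with right-hand side controlled by $\mathcal{K}$ (as in \eqref{K}), which, once integrated, keeps $E(t)$ finite and the strip nondegenerate for all $0<t<\tilde T$, with $\tilde T$ as in \eqref{tildeT}; the boundedness of the $L^2(\Ss_\omega)$ norm then forces the Fourier decay $e^{\mp k\omega t}$ and hence the claimed complex analyticity.

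Finally, for the $L^\infty$ bounds I would run a separate, lower-order estimate directly on $\|u(x\pm i\omega t)\|_{L^\infty}$ (or on its square, via a Duhamel representation with kernel $e^{-\mu\Lambda^\alpha t}$ evaluated on the strip, which stays bounded precisely because $\mu|k|^\alpha\ge\omega|k|$ for $\alpha\ge1$), arranging the interval $[0,\tilde T)$ so that the nonlinear increment contributes at most a factor $2$, which yields the constant $\sqrt2$. The main obstacle I anticipate is the simultaneous bookkeeping on the strip of the nonlocal operator $\Lambda^{\beta-1}H$ together with the moving-boundary term: one must show that the destabilizing $\omega\pax$ contribution is genuinely dominated by $\mu\Lambda^\alpha$ uniformly down to $\alpha=1$, and that the complexified singular integral does not spoil the algebra estimates --- this borderline absorption, rather than the nonlinearity itself, is the delicate point.
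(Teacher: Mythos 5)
Your overall strategy coincides with the paper's: both track the Hardy--Sobolev energy $\|u\|_{H^3(\Ss_\omega)}^2+\|v\|_{H^4(\Ss_\omega)}^2$ on the growing strip, pick up the moving-boundary contribution $\pm i\omega\pax f$ from differentiating $f(x\pm i\omega t)$, and absorb it into the fractional dissipation using $\alpha,\beta\geq1$ and $\omega\leq\omega_0$, ending with an ODI $\frac{d}{dt}E\leq\mathcal{K}E^4$ that keeps $E$ finite up to $\tilde T$. However, there is a genuine gap in your treatment of the transport nonlinearity in the critical case $\alpha=1$ (or $\beta=1$), which the theorem explicitly covers. On the real line the top-order term $\int\pax^3u\,\pax^4u\,\Lambda^{\beta-1}Hv$ is harmless after integration by parts, but on the complex strip $u$ and $v$ are genuinely complex and this antisymmetry breaks: one is left with cross terms such as $\int \im\,\Lambda^{\beta-1}Hv\,\pax^3\re\,u\,\pax^4\im\,u$, which cost half a derivative and produce $\|\im\,\Lambda^{\beta-1}Hv\|_{L^\infty(\Ss_\omega)}\|u\|_{\dot H^{3.5}(\Ss_\omega)}^2$ --- exactly the order of the available dissipation $\mu\|u\|_{\dot H^{3+\alpha/2}(\Ss_\omega)}^2$ when $\alpha=1$. ``Banach algebra plus absorb the top order'' cannot close this; the paper resolves it by exhibiting a commutator structure (estimated via Kenig--Ponce--Vega, Lemma \ref{lemaaux2}) and, crucially, by adding the barrier $\bigl(\tfrac{\mu}{4}-\sqrt{2}\|\im\,v\|_{\dot H^2}\bigr)^{-1}$ to the energy, which exploits the fact that $\im\,v$ vanishes at $t=0$ (the strip is initially degenerate) to guarantee the coefficient stays small. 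You correctly flag this borderline absorption as the delicate point, but you do not supply the mechanism that makes it work.

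A second, smaller divergence concerns the $L^\infty$ bounds with the precise constant $\sqrt2$. The paper does not run a Duhamel argument; it inserts the reciprocal barrier terms $\|(2\|u_0\|_{L^\infty(\TT)}-|u(z)|^2)^{-1}\|_{L^\infty}$ and its analogue for $v$ directly into the energy, so that finiteness of $E(t)$ on $[0,\tilde T)$ automatically forces $|u(z)|^2<2\|u_0\|_{L^\infty(\TT)}^2$. Your proposed route via $e^{-\mu t\Lambda^\alpha}$ on the strip is not obviously an $L^\infty$ contraction (the complexified kernel with multiplier $e^{-\mu t|k|^\alpha\mp k\omega t}$ need not be positive, so its $L^1$ norm must be estimated), and ``arranging the interval so the nonlinear increment contributes a factor $2$'' would at best give some constant, not evidently $\sqrt2$; I recommend adopting the barrier-in-the-energy device, which delivers the constant for free once the ODI closes.
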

\begin{remark}If in addition $\min\{\alpha,\beta\}>1$, the restriction $\omega\leq\omega_0$ can be relaxed. 
\end{remark}
Theorem \ref{smoothingeffect} is local in time. However, using a classical bootstrapping argument, the analyticity of $(u,v)$ in a complex strip around the real axis (possible with a very small width of the strip) can be obtained for every positive time $t>0$.

More precisely, we have
\begin{corol}\label{coro1}
If $\alpha,\beta> 1$, and $\min\{\mu,\nu\}>0$, $r\geq0$, the solutions $(u,v)\in H^3(\TT)\times H^4(\TT)$ to the problem \eqref{eqa1}-\eqref{eqa2} are real analytic for every $0<t$.
\end{corol}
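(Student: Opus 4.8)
The plan is a bootstrapping argument that promotes the local-in-time analyticity of Theorem \ref{smoothingeffect} to all positive times, using the global smoothness available in the subcritical regime. First I would note that, since $\alpha>1$ and $\beta>1$, we have $\beta>1\geq\alpha/2$ (recall $\alpha\leq2$), so the hypotheses of Theorem \ref{globalpp1} hold with $r\geq0$; combined with the local theory of Theorem \ref{localexistence} (applicable with $s=3$, as $H^4\hookrightarrow H^{3+\beta/2}$ because $\beta\leq2$) this produces a unique solution existing for all time. Moreover, Theorem \ref{smoothingeffect} already guarantees that the solution is analytic, hence $C^\infty$, on the initial window $(0,\tilde T)$, and the subcritical global regularity keeps it smooth thereafter; in particular, for every $t_1>0$ the state $(u(t_1),v(t_1))$ is a genuine element of $H^3(\TT)\times H^4(\TT)$ and may legitimately serve as new initial data.

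Next I would fix an arbitrary target time $t_0>0$ and a restart base point $\tau\in(0,\tilde T)$. The key quantitative observation is a uniform lower bound on the per-step analyticity window. On the compact interval $[\tau,t_0]$ the maps $t_1\mapsto\|u(t_1)\|_{H^3}$ and $t_1\mapsto\|v(t_1)\|_{H^4}$ are continuous (the solution being smooth for positive times), hence bounded, so they range over a compact set. Since the lifespan $\tilde T$ of \eqref{tildeT} is a continuous, strictly positive function of these two norms, evaluating it at the restart data $(u(t_1),v(t_1))$ yields
$$
\tilde T_{\min}:=\inf_{t_1\in[\tau,t_0]}\tilde T\big(u(t_1),v(t_1)\big)>0.
$$

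With this uniform window in hand I would cover $[\tau,t_0]$ by overlapping intervals of analyticity. Applying Theorem \ref{smoothingeffect} at a restart time $s_j$ yields real analyticity on the shifted interval $\big(s_j,\,s_j+\tilde T(u(s_j),v(s_j))\big)\supseteq(s_j,\,s_j+\tilde T_{\min})$, and here the Remark following Theorem \ref{smoothingeffect} lets us drop the restriction $\omega\leq\omega_0$ precisely because $\min\{\alpha,\beta\}>1$. Choosing $s_0=\tau$ and $s_{j+1}=s_j+\tfrac12\tilde T_{\min}$, consecutive windows overlap, so finitely many of them (all with base points in $[\tau,t_0]$) cover $[\tau,t_0]$, and in particular contain $t_0$. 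Together with the direct analyticity on $(0,\tilde T)$ this shows $(u,v)$ is analytic at $t_0$; as $t_0>0$ was arbitrary, analyticity holds for every positive time.

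The main obstacle is securing the strict positivity $\tilde T_{\min}>0$, which rests on the trajectory staying in a compact subset of $H^3\times H^4$ over $[\tau,t_0]$, i.e.\ on the \emph{absence} of finite-time blowup of these norms. This is exactly where the strict subcriticality $\alpha>1$ (rather than merely $\alpha\geq1$) is essential: it is the global regularity of Theorems \ref{globalpp1}--\ref{globalpp2} that keeps the $H^3$ and $H^4$ norms finite and continuous along the flow, thereby keeping $\tilde T$ bounded away from zero through every restart. In the critical case $\alpha=1$ only global \emph{weak} solutions (or solutions under smallness/logistic assumptions) are available, so the required compactness of the trajectory — and hence the whole bootstrap — would fail.
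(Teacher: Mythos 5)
Your proposal is correct and follows essentially the same route as the paper: a continuation/bootstrapping argument that restarts Theorem \ref{smoothingeffect} at later times, using the global-in-time $H^3\times H^4$ regularity from the subcritical theory to guarantee each restart is admissible. In fact you are somewhat more careful than the paper's own (rather terse) proof, since you explicitly justify via compactness of $[\tau,t_0]$ that the per-step analyticity windows $\tilde T(u(t_1),v(t_1))$ are bounded away from zero, whereas the paper only remarks that a positive $\delta$ exists at every restart.
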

Moreover, it holds
\begin{corol}\label{illposed}
If $\alpha,\beta\geq 1$, and $\min\{\mu,\nu\}<0$, the problem is ill-posed, \emph{i.e.} there are solutions $(u,v)$ to the problem \eqref{eqa1}-\eqref{eqa2} such that 
$$
\|u_0\|_{H^3(\TT)}+\|v_0\|_{H^4(\TT)}<\epsilon
$$
and
$$
\limsup_{t\rightarrow\delta^{-}}\|u(t)\|_{H^3(\TT)}+\|v(t)\|_{H^4(\TT)}=\infty,
$$
for every $\epsilon>0$ and small $\delta>0$.
\end{corol}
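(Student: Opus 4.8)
The plan is to trace the ill-posedness to the anti-dissipative (backward fractional-heat) character of whichever diffusion carries a negative coefficient, and to exhibit explicit or near-explicit solutions emanating from arbitrarily small data whose norm explodes in finite time. I would split into two cases according to whether the offending coefficient is $\nu$ or $\mu$; no sign restriction on the data is needed, since Corollary \ref{illposed} asks only for smallness of $\|u_0\|_{H^3}+\|v_0\|_{H^4}$.

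First, the clean case $\nu<0$. Here I would simply take $u\equiv0$, which is consistent with the first equation (every term vanishes), so the second equation reduces to the linear backward fractional-heat equation $\pat v=|\nu|\Lambda^\beta v-\lambda v$, solvable mode-by-mode as $\widehat{v}(k,t)=\widehat{v_0}(k)e^{(|\nu||k|^\beta-\lambda)t}$. I would then choose lacunary data $v_0=\sum_j a_j\cos(k_jx)$ with integer frequencies $k_j$ chosen so that $2|\nu|k_j^\beta\approx (j\log 2)/\delta$ and amplitudes $a_j$ with $\sum_j a_j^2 k_j^{8}<\epsilon^2$. A direct computation shows $\|v(t)\|_{H^4}^2$ behaves like a geometric series $\sum_j a_j^2 k_j^{8}\,2^{jt/\delta}$, which converges for every $t<\delta$ (so $v$ is a genuine smooth, indeed analytic, solution on $[0,T]$ for each $T<\delta$) while diverging as $t\to\delta^-$. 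Since $\|u_0\|_{H^3}=0$ and $\|v_0\|_{H^4}<\epsilon$, this already yields the statement when $\nu<0$.

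Second, the genuinely coupled case $\mu<0$ (with $\nu\ge0$), where the backward operator sits in the $u$-equation, whose quadratic drift $\pax(u\Lambda^{\beta-1}Hv)$ and logistic term prevent the decoupling trick (a single-mode datum immediately cascades to the doubled frequency). Here I would run a norm-inflation argument. I would pick highly oscillatory small data, e.g. $u_0=\eta\cos(Nx)$, $v_0=0$, for which the linearization about zero, $\pat u=|\mu|\Lambda^\alpha u+ru$, amplifies the mode by $e^{(|\mu|N^\alpha+r)t}$; superposing such modes as above produces the finite-time explosion of the linear flow. The remaining task is to produce an \emph{actual} solution of the full nonlinear system realizing this growth: since the data is analytic, I would solve forward in the Hardy--Sobolev/analytic class $H^s(\Ss_\omega)$ introduced for Theorem \ref{smoothingeffect} (now with a strip that shrinks rather than grows, reflecting the reversed sign), and then estimate the quadratic nonlinearity and the coupling through $v$ as subdominant relative to the exponentially amplifying linear part on the short window $[0,\delta)$.

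The main obstacle is precisely this $\mu<0$ case: unlike $\nu<0$, there is no free companion field, so one must guarantee existence of a bona fide forward-in-time solution of a backward-parabolic system on the whole interval $[0,\delta)$ and control the frequency cascade generated by $\pax(u\Lambda^{\beta-1}Hv)$ so that it stays of lower order than the amplifying linear evolution. This forces the analysis into the analytic spaces of Section \ref{S5}, and care is needed so that the (possibly $N$-dependent) lifespan of the analytic solution still covers the target time $\delta$; tuning the amplitude $\eta$ against the frequency $N$ and exploiting the quadratic, hence higher-order-small, nature of the nonlinearity is what makes the comparison with the linear flow go through. By contrast, the hypotheses $\alpha,\beta\ge1$ enter only mildly, the instability mechanism being available for any positive order of the mis-signed fractional diffusion.
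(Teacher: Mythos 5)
First, note that the paper does not actually prove Corollary \ref{illposed}: it only writes ``for the proof we refer to \cite{AGM}, \cite{CGO}, \cite{GH}''. In those references the mechanism is time reversal combined with the instantaneous smoothing of the well-signed problem (here, Theorem \ref{smoothingeffect}): one evolves the \emph{stable} system forward from data that is small in a low norm but \emph{not} in $H^3\times H^4$, uses instant analyticity to land at time $\delta$ on small analytic data, and then reads the trajectory backwards. The reversed trajectory is a genuine solution of the mis-signed system emanating from small analytic data, and its $H^3\times H^4$ norm necessarily satisfies $\limsup_{t\to\delta^-}\|u(t)\|_{H^3}+\|v(t)\|_{H^4}=\infty$ because the terminal state is the original rough datum. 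This is where the hypothesis $\alpha,\beta\geq1$ really enters (it is needed for Theorem \ref{smoothingeffect}), contrary to your closing remark that it plays only a mild role. Your Case 1 ($\nu<0$, take $u\equiv0$ and run the decoupled backward fractional heat equation on lacunary data) is correct, complete and more elementary than the reference route; it genuinely settles the corollary whenever $\nu<0$.

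The gap is Case 2, $\mu<0\leq\nu$, which is not covered by Case 1 and which your plan does not close. Two distinct problems arise. (i) Existence: for a backward-parabolic \emph{nonlinear} system one cannot simply ``solve forward''; one needs a Cauchy--Kovalevskaya/Nirenberg--Nishida theorem in analytic classes with a shrinking strip, and the lifespan it yields depends on the initial strip width and the data. Nothing in the paper (Theorem \ref{smoothingeffect} is proved only for $\mu,\nu>0$) supplies this, and you do not construct it; without it there is no solution whose norm can blow up. (ii) Even granting such an existence theory, your norm-inflation scheme with data $\eta\cos(Nx)$ produces, for each $M$, \emph{some} solution with small data exceeding $M$ at time $\delta$ --- i.e.\ discontinuity of the solution map --- but the corollary asserts the existence of a \emph{single} solution with $\limsup_{t\to\delta^-}=\infty$; passing from a sequence of inflating solutions to one blowing-up trajectory is exactly the step that the superposition trick handles in the linear, decoupled case and that the quadratic coupling $\pax(u\Lambda^{\beta-1}Hv)$ obstructs here. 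The time-reversal argument avoids both issues at once (existence is inherited from the forward stable flow, and blowup is automatic because the terminal datum is chosen outside $H^3\times H^4$), which is why the references proceed that way; note however that a clean reversal of \eqref{eqa1}--\eqref{eqa2} flips \emph{both} diffusion signs, so the mixed case $\mu<0\leq\nu$ requires additional care there as well. As it stands, your proposal proves the corollary only under the stronger hypothesis $\nu<0$.
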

\subsection{Large-time dynamics}

We are interested in the existence of attractors for \eqref{eqa1}-\eqref{eqa2} and their dynamical properties. 

\subsubsection{Existence of attractor}
We answer the question concerning the existence of a connected, compact attractor in the following theorem.

\begin{teo}\label{attractor}
Given $r>0$, $2\geq\beta\geq\alpha\geq 8/7$, the system \eqref{eqa1}-\eqref{eqa2} has a maximal, connected, compact attractor in the space $H^{3\alpha}(\TT)\times H^{3\alpha+\beta/2}(\TT)$.
\end{teo}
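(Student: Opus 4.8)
The plan is to cast \eqref{eqa1}--\eqref{eqa2} as a continuous dissipative semigroup on the phase space $X=H^{3\alpha}(\TT)\times H^{3\alpha+\beta/2}(\TT)$ and then apply the classical construction of the global attractor (in the spirit of Temam and Hale). Since $r>0$, $\beta\ge\alpha\ge 8/7>1$ and $3\alpha\ge3$, Theorem \ref{globalpp1abs} with $k=3$ produces, for each $(u_0,v_0)\in X$, a unique global solution with $(u,v)\in C([0,\infty),X)$; thus $S(t)(u_0,v_0):=(u(t),v(t))$ is well defined and the identities $S(0)=\mathrm{Id}$, $S(t+s)=S(t)S(s)$ follow from uniqueness. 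To upgrade $S(t)$ to a \emph{continuous} semigroup I would estimate the difference of two solutions: writing $(w,z)=(u_1-u_2,v_1-v_2)$ and running an $H^{3\alpha}$ energy estimate, the bilinear terms $\pax(w\Lambda^{\beta-1}Hv_1)$ and $\pax(u_2\Lambda^{\beta-1}Hz)$ are absorbed using the dissipation $\mu\Lambda^\alpha$ together with the a priori bounds on $(u_i,v_i)$, yielding local Lipschitz dependence on the data and the joint continuity of $(t,w)\mapsto S(t)w$.

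Next I would exhibit a bounded absorbing set, which is essentially the content of Theorem \ref{globalpp1abs}. That result (applicable here because $\beta\ge\alpha>1$ and $r>0$) provides a universal radius $S(\cdot)$ and an entry time $T^*$ with $\|u(t+1)\|^2_{\dot{H}^{k\alpha/2}}\le S(\dot{H}^{k\alpha/2})$ for $t\ge T^*$, together with the matching bound for $v$ in $H^{\beta+(k-1)\alpha/2}$. Choosing $k=6$ places $u(t+1)$ in a fixed ball of $H^{3\alpha}$ and $v(t+1)$ in a fixed ball of $H^{\beta+5\alpha/2}\subseteq H^{3\alpha+\beta/2}$ (the inclusion holding because $\beta\ge\alpha$); the corresponding ball $\mathcal{B}\subset X$ absorbs every bounded subset of $X$ after a finite time.

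Compactness comes from the same estimate taken one step higher: with $k=7$ the solution is, after time one, uniformly bounded in $H^{7\alpha/2}(\TT)\times H^{3\alpha+\beta}(\TT)$, and since $7\alpha/2>3\alpha$ and $3\alpha+\beta>3\alpha+\beta/2$, the embeddings of these spaces into $X$ are compact on the torus. Hence $\bigcup_{t\ge t_0}S(t)\mathcal{B}$ is relatively compact in $X$, i.e. $S(t)$ is uniformly compact for large $t$. The abstract theorem then yields that $\mathcal{A}=\omega(\mathcal{B})=\bigcap_{s\ge0}\overline{\bigcup_{t\ge s}S(t)\mathcal{B}}$ is a nonempty, compact, $S(t)$-invariant set that attracts every bounded subset of $X$ and is maximal among bounded invariant sets. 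For connectedness, take $\mathcal{B}$ to be a ball, hence convex and connected; then $\bigcup_{t\ge s}S(t)\mathcal{B}$ is the continuous image of the connected set $[s,\infty)\times\mathcal{B}$ under $(t,w)\mapsto S(t)w$, so it and its closure are connected, and $\mathcal{A}$, being a nested intersection of compact connected sets, is connected.

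The step I expect to be the main obstacle is the high-regularity continuous-dependence estimate of the first paragraph: closing the difference estimate at the level $H^{3\alpha}$ that defines the phase space requires the $\alpha/2$ derivatives gained from $\mu\Lambda^\alpha$ to dominate the derivative loss carried by the transport nonlinearity $\pax(u\Lambda^{\beta-1}Hv)$, and I expect it is exactly this balance --- coupled with the requirement that the absorbing set of Theorem \ref{globalpp1abs} live at regularity $3\alpha$ --- that forces the restriction $\alpha\ge 8/7$, strictly stronger than mere subcriticality $\alpha>1$. Once continuity is secured, the absorbing set, the compact Sobolev embedding and the abstract attractor theorem combine in a routine way.
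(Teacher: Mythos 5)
Your overall architecture (semiflow on $X=H^{3\alpha}\times H^{3\alpha+\beta/2}$, bounded absorbing ball, uniform compactness, then the abstract attractor theorem of Temam) is the same as the paper's, and the $k=6$ step for the absorbing ball and the $\omega$-limit/connectedness argument are fine. The genuine gap is in your compactness step. You invoke Theorem \ref{globalpp1abs} ``one step higher'' ($k=7$ in the $\dot H^{k\alpha/2}$ scale) to place the solution, after time one, in $H^{7\alpha/2}\times H^{3\alpha+\beta}$. But the absorbing-set machinery at the level $\dot H^{7\alpha/2}$ requires the solution to actually live in (at least) $H^{7\alpha/2}$ so that the corresponding energy identity and the uniform Gronwall lemma can be run there; for initial data that are merely in the phase space $H^{3\alpha}\times H^{3\alpha+\beta/2}$, Theorem \ref{globalpp1abs} as stated does not supply this. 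Some gain of regularity above the phase-space level is indispensable, and your proposal never produces it. The paper bridges exactly this gap with the instantaneous-analyticity result (Theorem \ref{smoothingeffect}): for any $t\ge\delta>0$ the solution is analytic, hence in $H^{3.5\alpha}\times H^{\beta+3\alpha}$, and only then can the absorbing-set induction be rerun at that higher level to get a \emph{uniform} bound $\max_{t\ge T^*}\{\|u(t)\|_{H^{3.5\alpha}}+\|v(t)\|_{H^{\beta+3\alpha}}\}\le C$, from which compactness follows by the compact Sobolev embedding. (One could alternatively try to extract the regularity gain from the $L^2_t H^{3\alpha+\alpha/2}$ parabolic smoothing and restart from an almost-every time, but that argument is not in your proposal either.)

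This also explains why your diagnosis of the hypothesis $\alpha\ge 8/7$ is off target. It is not forced by the continuous-dependence estimate at level $H^{3\alpha}$ (which only needs $\alpha>1$, as in Theorems \ref{globalpp1}--\ref{globalpp2}); it is forced by the smoothing step you omitted. Theorem \ref{smoothingeffect} requires $(u_0,v_0)\in H^3\times H^4$, and since $\beta\ge\alpha$ gives $3\alpha+\beta/2\ge 3.5\alpha$, the condition $3.5\alpha\ge 4$, i.e.\ $\alpha\ge 8/7$, is exactly what guarantees that every point of the phase space is admissible data for the analyticity theorem. Separately, note that the semiflow framework also requires continuity of $t\mapsto S(t)(u_0,v_0)$ \emph{in the topology of $X$}; the paper obtains this by a duality estimate showing $\pat\Lambda^{3\alpha}u\in L^2_tH^{-\alpha/2}$ together with $\Lambda^{3\alpha}u\in L^2_tH^{\alpha/2}$ (and likewise for $v$), a point your proposal does not address, although it is more routine than the compactness issue.
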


\subsubsection{Estimates of number of peaks}

We can apply Theorem \ref{smoothingeffect} to study certain dynamical properties of the system \eqref{eqa1}-\eqref{eqa2}. In particular, applying Theorem \ref{smoothingeffect} together with the complex analyticity of the solution $(u,v)$, we can obtain a bound on the number of peaks. Let us begin with

\begin{teo}\label{oscillations}
Let $N\geq3$, $N\in\NN$, $2\geq \alpha,\beta\geq 1$, $\mu,\nu>0$, $\lambda,r\geq0$ and the initial data $(u_0,v_0)\in H^3(\TT)\times H^4(\TT)$ be given and write
$$
\mathcal{W}=\frac{\omega_0 \tilde{T}}{N},
$$
where $\omega_0$ and $\tilde{T}$ are defined in \eqref{omega0} and \eqref{tildeT} respectively. Then, for any $\epsilon>0$, $0<\tilde{T}/(N-1)< t<\tilde{T}$, \[\TT=I^u_\epsilon\cup R^u_\epsilon=I^v_\epsilon\cup R^v_\epsilon,\] where $I^u_\epsilon,I^v_\epsilon$ are the union of at most $[\frac{4\pi}{\mathcal{W}}]$ intervals open in $\TT$, and
\begin{itemize}
\item $|\pax u(x)| \leq \epsilon, \text{ for all }x\in I^u_\epsilon,$
\item $\#\{x \in R^u_\epsilon : \pax u(x)=0\}\leq
\frac{2}{\log 2}\frac{2\pi}{\mathcal{W}}\log\left(\frac{\sqrt{2}(N-1)\|u_0\|_{L^\infty(\TT)}}{\mathcal{W}\epsilon}\right),$
\item $|\pax v(x)| \leq \epsilon, \text{ for all }x\in I^u_\epsilon,$
\item $\#\{x \in R^v_\epsilon : \pax v(x)=0\}\leq
\frac{2}{\log 2}\frac{2\pi}{\mathcal{W}}\log\left(\frac{\sqrt{2}(N-1)\|v_0\|_{L^\infty(\TT)}}{\mathcal{W}\epsilon}\right).$
\end{itemize}
\end{teo}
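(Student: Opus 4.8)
The plan is to read off all the information from the instantaneous analyticity of Theorem~\ref{smoothingeffect} and then to count zeros of a holomorphic function by Jensen's formula. For $t$ in the stated range the functions $u(t,\cdot)$ and $v(t,\cdot)$ extend holomorphically to the complex strip $\Ss_{\omega_0}$ of half-width $\omega_0 t$, and satisfy $\|u(t)\|_{L^\infty(\Ss_{\omega_0})}\le\sqrt2\,\|u_0\|_{L^\infty(\TT)}$ and $\|v(t)\|_{L^\infty(\Ss_{\omega_0})}\le\sqrt2\,\|v_0\|_{L^\infty(\TT)}$, with $\omega_0$ as in \eqref{omega0}. A peak of $u$ is a critical point, hence a zero of the holomorphic function $\pax u$; so the whole statement amounts to decomposing $\TT$ into a portion on which $|\pax u|\le\epsilon$ and a portion carrying finitely many zeros of $\pax u$, whose number is controlled by the maximum modulus of $\pax u$ on a complex disk.

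First I would quantify the available room inside the strip. Since $\mathcal{W}=\omega_0\tilde{T}/N$ by \eqref{tildeT} and $t>\tilde{T}/(N-1)$, the half-width obeys $\omega_0 t>\tfrac{N}{N-1}\mathcal{W}$; thus an outer radius $R$ of order $\mathcal{W}$ together with a Cauchy margin $d$ of order $\mathcal{W}/(N-1)$ keeps the closed disk $\overline{D(x_0,R+d)}$ inside $\Ss_{\omega_0}$ for every real center $x_0\in\TT$. A Cauchy estimate then upgrades the sup bound on $u$ to a bound on its derivative,
\[
\sup_{z\in D(x_0,R)}|\pax u(z)|\ \le\ \frac1d\,\|u(t)\|_{L^\infty(\Ss_{\omega_0})}\ \le\ \frac{\sqrt2\,(N-1)\,\|u_0\|_{L^\infty(\TT)}}{\mathcal{W}}=:M_u,
\]
uniformly in $x_0$, which already explains the appearance of the factor $(N-1)/\mathcal{W}$; the same computation gives the analogous bound $M_v$ for $\pax v$.

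Next I would cover $\TT$ by real intervals of length comparable to $\mathcal{W}$, at most $2\pi/\mathcal{W}$ of them, and classify each. On an interval where $|\pax u|\le\epsilon$ throughout I put the interval into $I^u_\epsilon$; collecting these pieces (each covering interval contributing at most two maximal flat subintervals) yields the union of at most $[4\pi/\mathcal{W}]$ open intervals claimed for $I^u_\epsilon$. On any remaining interval there is a point $x_\ast$ with $|\pax u(x_\ast)|>\epsilon$, which I take as the center of a concentric pair of disks $D(x_\ast,\rho)\subset D(x_\ast,R)$ with ratio $R/\rho=\sqrt2$, chosen so that $D(x_\ast,\rho)$ contains the whole interval while $\overline{D(x_\ast,R+d)}\subset\Ss_{\omega_0}$. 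Jensen's formula, with the pointwise lower bound $|\pax u(x_\ast)|>\epsilon$ and the upper bound $M_u$ from the previous step, then gives
\[
\#\{z\in D(x_\ast,\rho):\pax u(z)=0\}\ \le\ \frac{1}{\log(R/\rho)}\log\frac{M_u}{|\pax u(x_\ast)|}\ \le\ \frac{2}{\log2}\,\log\!\left(\frac{\sqrt2\,(N-1)\,\|u_0\|_{L^\infty(\TT)}}{\mathcal{W}\,\epsilon}\right).
\]
Since the real trace of $D(x_\ast,\rho)$ contains the interval, every zero of $\pax u$ lying in $R^u_\epsilon$ is accounted for; summing over the at most $2\pi/\mathcal{W}$ steep intervals produces exactly the stated bound, and the argument for $v$ is verbatim with $\|u_0\|_{L^\infty}$ replaced by $\|v_0\|_{L^\infty}$.

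The main obstacle is conceptual rather than computational: Jensen's formula requires a genuine pointwise lower bound on $|\pax u|$, which simply fails near the flat parts of the graph. This is precisely why the conclusion must be phrased through the $\epsilon$-splitting $\TT=I^u_\epsilon\cup R^u_\epsilon$ --- one discards the set where $\pax u$ is too small to serve as a Jensen reference, and counts zeros only where a center with $|\pax u(x_\ast)|>\epsilon$ is available. The remaining delicate point is the mutual compatibility of the three length scales: the inner/outer radius ratio must be $\sqrt2$ to yield $2/\log2$, the outer radius plus the Cauchy margin must fit inside the time-dependent strip (which is what forces $t>\tilde{T}/(N-1)$ and produces the factor $(N-1)/\mathcal{W}$), and the covering of $\TT$ by intervals of length of order $\mathcal{W}$ must simultaneously be fine enough that each steep interval sits inside its inner disk and coarse enough to keep the count at $2\pi/\mathcal{W}$ (respectively $[4\pi/\mathcal{W}]$ for the flat intervals).
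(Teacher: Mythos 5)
Your argument is essentially the paper's: the paper likewise combines Theorem \ref{smoothingeffect} with a Cauchy estimate (together with Hadamard's three lines theorem) over the margin $\omega_0 t-\mathcal{W}\ge \mathcal{W}/(N-1)$ to obtain $\|\pax u\|_{L^\infty(\{|\Im z|\le\mathcal{W}\})}\le \sqrt{2}(N-1)\|u_0\|_{L^\infty(\TT)}/\mathcal{W}$, and then concludes immediately by citing the counting Lemma \ref{grujic}. Your covering-plus-Jensen step is precisely a reconstruction of that cited lemma, so the only difference is that you reprove it rather than invoke it; the constant bookkeeping you flag at the end (fitting the $\sqrt{2}$-ratio disks and the $2\pi/\mathcal{W}$ covering inside the width-$\mathcal{W}$ substrip) is exactly what the paper delegates to \cite{AnalyticityKuramotoGrujic}.
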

Notice that Theorem \ref{oscillations} gives us an estimate of the number of peaks appearing in the evolution (and reported in the numerical simulations). Indeed, we have the following corollary.

\begin{corol}\label{oscillationsattractor}
Let $r>0$, $2\geq\beta\geq\alpha\geq 8/7$ and $(u,v)$ be a solution in the attractor, then the number of peaks for $u$ can be bounded as
$$
\#\{\text{peaks for $u$}\}\leq
\frac{12\pi\mathcal{K}_1}{\log 2}\log\left(6\sqrt{2}\mathcal{K}_1C^2_{SE}(\alpha)S(H^{\alpha/2})\right),
$$
where $S(H^{\alpha/2})$ and $\mathcal{K}_1$ are defined in \eqref{SHalpha2} and \eqref{K1}, respectively.
\end{corol}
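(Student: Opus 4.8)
The plan is to read the bound off directly from Theorem \ref{oscillations} after specializing its free parameters to $N=3$ and $\epsilon=1$, and after replacing the initial-data quantities $\|u_0\|_{L^\infty(\TT)}$ and $\tilde{T}$ by \emph{uniform} bounds valid along any orbit lying on the attractor. In this sense the corollary is essentially a bookkeeping specialization of Theorem \ref{oscillations}, once the correct uniform estimates are in hand.

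First I would record the a priori bounds available on the attractor. Since $r>0$ and $2\ge\beta\ge\alpha>1$ (as $8/7>1$), Theorem \ref{globalpp1abs} applies, and together with its stated consequence it gives $\|u(t)\|^2_{H^{\alpha/2}}\le S(H^{\alpha/2})$ uniformly in $t$ for solutions sitting on the attractor; invariance lets us use the asymptotic bound at \emph{every} time. The second Sobolev embedding in \eqref{C_SE2} then yields $\|u(t)\|_{L^\infty(\TT)}\le C^2_{SE}(\alpha)\sqrt{S(H^{\alpha/2})}\le C^2_{SE}(\alpha)S(H^{\alpha/2})$, the last step using $S\ge1$. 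Moreover, by Theorem \ref{attractor} the attractor is compact in $H^{3\alpha}(\TT)\times H^{3\alpha+\beta/2}(\TT)$, so $\|u\|_{H^3}$ and $\|v\|_{H^4}$ are uniformly bounded on it; hence $\tilde{T}$ from \eqref{tildeT} is bounded below by a constant depending only on the problem data. This is exactly what is packaged into $\mathcal{K}_1$ in \eqref{K1}, namely $\mathcal{K}_1=1/(\omega_0\tilde{T})$.

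Next I would reduce the counting of peaks to a counting of critical points in the rough set of Theorem \ref{oscillations}. A peak is a local maximum of $u$ sitting next to a region where the slope exceeds one; choosing $\epsilon=1$, the flat set $I^u_1$ carries $|\pax u|\le1$ and therefore cannot host such a configuration, so every peak's critical point lies in $R^u_1$. Consequently $\#\{\text{peaks for }u\}\le\#\{x\in R^u_1:\pax u(x)=0\}$. Applying Theorem \ref{oscillations} with $N=3$ (so $N-1=2$) and $\epsilon=1$ — legitimate at every time on the attractor by restarting the flow from $(u(t-s),v(t-s))$ with $s$ in the admissible window $(\tilde{T}/2,\tilde{T})$ — bounds this count by $\frac{2}{\log2}\frac{2\pi}{\mathcal{W}}\log\!\big(2\sqrt2\,\|u_0\|_{L^\infty(\TT)}/\mathcal{W}\big)$ with $\mathcal{W}=\omega_0\tilde{T}/3$. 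Inserting the uniform $L^\infty$ bound $\|u\|_{L^\infty}\le C^2_{SE}(\alpha)S(H^{\alpha/2})$ and using $\mathcal{K}_1=1/(3\mathcal{W})$ turns the prefactor $\frac{4\pi}{\mathcal{W}}$ into $12\pi\mathcal{K}_1$ and the argument of the logarithm into $6\sqrt2\,\mathcal{K}_1C^2_{SE}(\alpha)S(H^{\alpha/2})$, which is precisely the asserted estimate.

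The main obstacle is not the algebra but the two conceptual points underpinning it. First, one must make the informal notion of \emph{peak} precise enough that $\#\{\text{peaks}\}\le\#\{x\in R^u_1:\pax u=0\}$ is genuinely rigorous; this is where the threshold $\epsilon=1$, matching the slope-one characterization of peaks, is essential, and where the geometry of the decomposition $\TT=I^u_1\cup R^u_1$ must be used to place each peak inside a rough interval. Second, one must certify that all constants — the uniform $L^\infty$ bound through $S(H^{\alpha/2})$ and $\tilde{T}$ (hence $\mathcal{K}_1$) through the uniform higher-order bounds — are truly independent of the time slice and of the particular orbit on the attractor. Both rest on the invariance and compactness of the attractor from Theorem \ref{attractor} together with the restart argument, and on the passage from the squared-norm absorbing estimate to an $L^\infty$ bound via \eqref{C_SE2}.
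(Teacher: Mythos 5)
Your overall strategy --- uniform bounds on the attractor, a restart argument so that Theorem \ref{oscillations} applies at every time, the choices $\epsilon=1$, $N=3$, and the bound $\|u\|_{L^\infty}\leq C^2_{SE}(\alpha)S(H^{\alpha/2})$ via \eqref{C_SE2} --- is exactly the paper's. But the step that produces the stated constants is wrong: you keep $\omega=\omega_0$, set $\mathcal{W}=\omega_0\tilde{T}/3$, and then assert $\mathcal{K}_1=1/(\omega_0\tilde{T})=1/(3\mathcal{W})$. That identity is false: $\mathcal{K}_1$ is the explicit constant \eqref{K1} built from $\mathcal{C}_1,\dots,\mathcal{C}_4$ and Sobolev constants, whereas $\omega_0\tilde{T}=\omega_0\bigl(1+\|u_0\|_{H^3(\TT)}^2+\|v_0\|_{H^4(\TT)}^2\bigr)/(3\mathcal{K}_1)$ still depends on the initial data. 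With your choice of $\mathcal{W}$ the prefactor is $\frac{4\pi}{\mathcal{W}}=\frac{12\pi}{\omega_0\tilde{T}}$, which is not $12\pi\mathcal{K}_1$ and exceeds it whenever $\omega_0\bigl(1+\|u_0\|_{H^3}^2+\|v_0\|_{H^4}^2\bigr)<3$ (recall $\omega_0=\min\{\nu/3,\mu/8\}$ may be small); bounding $\tilde{T}$ from below via compactness of the attractor does not recover the exact constants claimed in the statement.

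The missing idea is the one place where $\alpha\geq 8/7$ (hence $\min\{\alpha,\beta\}>1$) is used beyond invoking Theorems \ref{globalpp1abs} and \ref{attractor}: by the remark following Theorem \ref{smoothingeffect}, for $\alpha,\beta>1$ the restriction $\omega\leq\omega_0$ can be dropped, since the terms $4\omega\|u\|_{\dot{H}^{3.5}(\Ss_\omega)}^2$ and $3\omega\|v\|_{\dot{H}^{4.5}(\Ss_\omega)}^2$ are absorbed by the dissipation for \emph{any} $\omega>0$, at the price of the $\omega$-dependent constants $\mathcal{C}_1,\mathcal{C}_2,\mathcal{C}_3$ inside $\mathcal{K}_1$. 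The paper then chooses $\omega=N/\bigl(1+\|u_0\|_{H^{3}(\TT)}^2+\|v_0\|_{H^{4}(\TT)}^2\bigr)$, which makes $\mathcal{W}=\omega\tilde{T}/N=1/(3\mathcal{K}_1)$ exactly, independent of the orbit; then $\frac{4\pi}{\mathcal{W}}=12\pi\mathcal{K}_1$ and $\frac{\sqrt{2}(N-1)\|u\|_{L^\infty}}{\mathcal{W}\epsilon}=6\sqrt{2}\,\mathcal{K}_1\|u\|_{L^\infty}$ with $N=3$, $\epsilon=1$, and the corollary follows after inserting $\|u\|_{L^\infty}\leq C^2_{SE}(\alpha)S(H^{\alpha/2})$. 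Everything else in your write-up (the reduction of peaks to zeros of $\pax u$ in $R^u_1$, the use of $S(H^{\alpha/2})\geq1$ to pass from $\sqrt{S}$ to $S$) is consistent with the paper.
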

In \cite{Hillen4}, the authors perform a numerical study of the case $\alpha=\beta=2$, $\mu=\nu$, $r=\lambda$ and different values of $\lambda$ and $\nu$. They take initial data satisfying
$$
\|u_0\|_{L^\infty}=1,\;\|v_0\|_{L^\infty}\leq 1.01.
$$ 
We can use our previous results to give an analytical bound on the number of peaks that the solutions in \cite{Hillen4} develop:

\begin{corol}\label{oscillationsattractor2}
Let $(u,v)$ be the solution corresponding to the initial data in \cite{Hillen4}, then the number of peaks for $u$ and $v$ can be bounded as follows
$$
\#\{\text{peaks for $u$}\}\leq
\frac{12\pi\mathcal{K}_1}{\log 2}\log\left(6\sqrt{2}\mathcal{K}_1\right),
$$
$$
\#\{\text{peaks for $v$}\}\leq
\frac{12\pi\mathcal{K}_1}{\log 2}\log\left(6\sqrt{2}\mathcal{K}_11.01\right),
$$
where $\mathcal{K}_1=\mathcal{K}_1(2,2,\nu,\nu,\lambda,\lambda)$ is defined in \eqref{K1}. 
\end{corol}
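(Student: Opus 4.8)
The plan is to obtain Corollary \ref{oscillationsattractor2} as a direct specialization of Theorem \ref{oscillations} to the parameters and initial data of \cite{Hillen4}. First I would record that the numerical experiment there corresponds to choosing $\alpha=\beta=2$, $\mu=\nu$ and $r=\lambda$ in \eqref{eqa1}-\eqref{eqa2}, so that the relevant constant is exactly $\mathcal{K}_1=\mathcal{K}_1(2,2,\nu,\nu,\lambda,\lambda)$, and that the prescribed initial data satisfy $\|u_0\|_{L^\infty}=1$ and $\|v_0\|_{L^\infty}\leq 1.01$. Since these parameters verify the hypotheses $2\geq\alpha,\beta\geq 1$, $\mu,\nu>0$, $\lambda,r\geq 0$ and $(u_0,v_0)\in H^3(\TT)\times H^4(\TT)$, Theorem \ref{oscillations} applies.

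Next I would fix the two free parameters of Theorem \ref{oscillations}, taking $N=3$ (the smallest admissible value) and $\epsilon=1$. The choice $\epsilon=1$ is dictated by the very notion of a peak: a peak is a local maximum sitting near a region where the slope exceeds one, hence no peak can lie inside the flat set $I^u_1$ on which $|\pax u|\leq 1$. Consequently every peak of $u$ is a critical point contained in the rough set $R^u_1$, so that $\#\{\text{peaks for }u\}\leq \#\{x\in R^u_1:\pax u(x)=0\}$, and the $[\frac{4\pi}{\mathcal{W}}]$ flat intervals contribute nothing to the count (this is consistent with Corollary \ref{oscillationsattractor}, whose final bound is the logarithmic zero-count alone). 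The identical reasoning applies to $v$ with $R^v_1$.

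It then remains to evaluate the zero-count estimate of Theorem \ref{oscillations} at $N=3$, $\epsilon=1$. Using $\mathcal{W}=\frac{\omega_0\tilde{T}}{3}$ together with the definition of $\mathcal{K}_1$ in \eqref{K1}, which amounts to $\frac{1}{\mathcal{W}}=3\mathcal{K}_1$ (equivalently $\mathcal{K}_1=\frac{1}{\omega_0\tilde{T}}$), the prefactor becomes $\frac{2}{\log 2}\cdot\frac{2\pi}{\mathcal{W}}=\frac{12\pi\mathcal{K}_1}{\log 2}$, while the argument of the logarithm becomes $\frac{\sqrt{2}(N-1)\|u_0\|_{L^\infty}}{\mathcal{W}\epsilon}=6\sqrt{2}\,\mathcal{K}_1\|u_0\|_{L^\infty}$. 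Substituting $\|u_0\|_{L^\infty}=1$ yields the asserted bound for $u$, and substituting $\|v_0\|_{L^\infty}\leq 1.01$ (invoking monotonicity of the logarithm) yields the bound for $v$.

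I do not expect a genuine analytic obstacle here, since all the substance is already contained in Theorems \ref{smoothingeffect} and \ref{oscillations}; the statement is essentially Corollary \ref{oscillationsattractor} with the absorbing-set quantity $C^2_{SE}(\alpha)S(H^{\alpha/2})$ replaced by the explicit data sizes $\|u_0\|_{L^\infty}$ and $\|v_0\|_{L^\infty}$. The only points requiring care are the interpretation step of the second paragraph—that the definition of a peak forces it into $R^u_\epsilon$ with the threshold $\epsilon=1$, so the flat intervals may be discarded—and the bookkeeping of the constants $\mathcal{W}$, $\mathcal{K}_1$ and of the factor $\sqrt{2}$ inherited from the $L^\infty(\Ss_\omega)$ bound of Theorem \ref{smoothingeffect}.
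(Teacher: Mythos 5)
Your overall strategy (specialize the peak-counting machinery to $N=3$, $\epsilon=1$, discard the flat intervals $I^u_1$, $I^v_1$ because a peak by definition sits where the slope exceeds one, and then plug in $\|u_0\|_{L^\infty}=1$, $\|v_0\|_{L^\infty}\leq 1.01$ with monotonicity of the logarithm) is exactly the paper's, which proves this corollary "by the same ideas" as Corollary \ref{oscillationsattractor}. However, there is a genuine gap in your identification of the constant. You keep $\omega=\omega_0$ and $\mathcal{W}=\frac{\omega_0\tilde{T}}{3}$ from Theorem \ref{oscillations} and then assert that the definition \eqref{K1} "amounts to" $\frac{1}{\mathcal{W}}=3\mathcal{K}_1$, i.e. $\mathcal{K}_1=\frac{1}{\omega_0\tilde{T}}$. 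That identity is false: by \eqref{tildeT} one has $\omega_0\tilde{T}=\omega_0\,\frac{1+\|u_0\|_{H^3}^2+\|v_0\|_{H^4}^2}{3\mathcal{K}_1}$ with $\omega_0=\min\{\nu/3,\mu/8\}$, so $\frac{1}{\mathcal{W}}=\frac{9\mathcal{K}_1}{\omega_0\left(1+\|u_0\|_{H^3}^2+\|v_0\|_{H^4}^2\right)}$, which involves the viscosities and the $H^3\times H^4$ norms of the data — quantities not controlled by the $L^\infty$ information supplied in \cite{Hillen4} — and does not reduce to $3\mathcal{K}_1$. Followed literally, your route produces a bound with $\frac{1}{\omega_0\tilde T}$ in place of $\mathcal{K}_1$, not the stated one.

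The missing ingredient is the improved choice of the strip parameter available precisely because here $\min\{\alpha,\beta\}=2>1$ (see the Remark after Theorem \ref{smoothingeffect} and the proof of Corollary \ref{oscillationsattractor}): one takes $\omega=\frac{N}{1+\|u_0\|_{H^3(\TT)}^2+\|v_0\|_{H^4(\TT)}^2}$, for which \eqref{tildeT} gives $\mathcal{W}=\frac{\omega\tilde{T}}{N}=\frac{1}{3\mathcal{K}_1}$, independently of the initial data. With that substitution your remaining bookkeeping is correct: the prefactor becomes $\frac{2}{\log 2}\cdot\frac{2\pi}{\mathcal{W}}=\frac{12\pi\mathcal{K}_1}{\log 2}$ and the argument of the logarithm becomes $\sqrt{2}(N-1)\cdot 3\mathcal{K}_1\|u_0\|_{L^\infty}=6\sqrt{2}\,\mathcal{K}_1\|u_0\|_{L^\infty}$ for $N=3$, yielding the two stated bounds. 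So the defect is not in the peak-counting interpretation but in which $\omega$ you feed into the analyticity strip; without the data-dependent choice of $\omega$ the constants simply do not collapse to $\mathcal{K}_1$.
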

Let us finally emphasize that presence of peaks is in no connection with a possibility of a blow up of solution. Our bounds are provided in the subcritical case, where global, regular solutions exist.

\subsection{Numerical simulations}
In the numerical part (Section \ref{S7}), we present first our simulations of emerging and merging peaks.
The main conclusion from our numerical study for further analysis is that, even in presence of a damping logistic term, our system may develop finite time singularities  for certain parameters. In particular, our numerics suggest that for $\alpha=0.5$, $\beta=1$ and a sufficiently strong nonlinear term (measured by chemical sensitivity $\chi$ there, compare the system \eqref{eqa1num} - \eqref{eqa2num}), the solution blows up in a finite time. Furthermore, our numerical solutions agree with the continuation criterion in Theorem \ref{continuation} in the sense that the spatial norms of the numerical solutions $\|\pax u\|_{L^\infty}$ are not integrable. 

\section{Proof of Theorem \ref{localexistence}: Local existence}\label{S2}
We prove now our local well-posedness result.

We prove the case $s=3$, the other cases being similar. \\
\textbf{Part 1. ({\emph{a priori} estimates}) }
We have
$$
\frac{1}{2}\frac{d}{dt}\|v\|_{L^2}^2\leq -\nu\|v\|_{\dot{H}^{\beta/2}}^2-\lambda\|v\|_{L^2}^2+\|u\|_{L^2}\|v\|_{L^2},
$$
\begin{align*}
\frac{1}{2}\frac{d}{dt}\|\Lambda^{\beta/2}\pax^3 v\|_{L^2}^2&=\int_{\TT}\Lambda^\beta \pax^3 v\pax^3\pat v\\
&= -\nu\|v\|_{\dot{H}^{3+\beta}}^2-\lambda\|v\|_{\dot{H}^{3+\beta/2}}^2+\|u\|_{\dot{H}^{3}}\|v\|_{\dot{H}^{3+\beta}}\\
&\leq -\frac{\nu}{2}\|v\|_{\dot{H}^{3+\beta}}^2-\lambda\|v\|_{\dot{H}^{3+\beta/2}}^2+\frac{2}{\nu}\|u\|^2_{\dot{H}^{3}},
\end{align*}
\begin{eqnarray*}
\frac{1}{2}\frac{d}{dt}\|u\|_{L^2}^2&=&-\mu\|u\|_{\dot{H}^{\alpha/2}}^2-\int_\TT\pax u u\Lambda^{\beta-1}Hvdx+r\int_\TT u^2(1-u)dx\\
&=&-\mu\|u\|_{\dot{H}^{\alpha/2}}^2+\frac{1}{2}\int_\TT u^2\Lambda^{\beta}vdx+r\int_\TT u^2(1-u)dx\\
&\leq& \|u(t)\|_{L^2}^2\|\Lambda^\beta v\|_{L^\infty}+r\|u\|_{L^2}^2,
\end{eqnarray*}
and
\begin{align*}
\frac{1}{2}\frac{d}{dt}\|\pax^3 u\|_{L^2}^2&=-\mu\|u\|_{\dot{H}^{3+\alpha/2}}^2+\int_\TT\pax^3 u \pax^4(u\Lambda^{\beta-1}Hv)dx\\
&\quad +r\int_\TT\pax^3 u \pax^3(u(1-u))dx,
\end{align*}
where the higher order terms are 
\begin{align*}
J_1&=\int_\TT\pax^3 u \pax^4u\Lambda^{\beta-1}Hvdx\leq \|\Lambda^\beta v\|_{L^\infty}\|u\|_{\dot{H}^{3}}^2,\\
J_2&=\int_\TT\pax^3 u u\Lambda^{\beta}\pax^3vdx\leq \|\Lambda^\beta \pax^3v\|_{L^2}\|u\|_{\dot{H}^{3}}\|u\|_{L^\infty},\\
J_3&=r\int_\TT (\pax^3 u)^2(1-2u) dx\leq r\|\pax^3u(t)\|_{L^2}^2.
\end{align*}
Using
$$
\int_\TT (\pax f)^4dx=-3\int\pax^2 f (\pax f)^2 f,
$$
together with H\"{o}lder's inequality, we obtain the following Gagliardo-Niremberg inequality
\begin{equation}\label{GN4}
\|\pax f\|_{L^4}^2\leq 3\|f\|_{L^\infty}\|\pax^2 f\|_{L^2}.
\end{equation}
Due to \eqref{GN4}, the lower order terms can be bounded as
$$
J_4=\int_\TT\pax^3 u \pax^3u\Lambda^{\beta}vdx\leq \|\Lambda^\beta v\|_{L^\infty}\|u\|_{\dot{H}^{3}}^2,
$$
\begin{align*}
J_5&=\int_\TT\pax^3 u \pax^2u\Lambda^{\beta}\pax vdx\leq \|\Lambda^\beta \pax v\|_{L^4}\|u\|_{\dot{H}^{3}}\|\pax^2 u\|_{L^4}\\
&\leq C\|\Lambda^\beta v\|^{0.5}_{L^\infty}\|\Lambda^\beta \pax^2 v\|^{0.5}_{L^2}\|u\|_{\dot{H}^{3}}^{1.5}\|\pax u\|_{L^\infty}^{0.5},
\end{align*}
$$
J_6=\int_\TT\pax^3 u \pax u\Lambda^{\beta}\pax^2 vdx\leq \|\Lambda^\beta \pax^2 v\|_{L^2}\|u\|_{\dot{H}^{3}}\|\pax u\|_{L^\infty},
$$
and
$$
J_7=6r\int_\TT\pax^3 u \pax u\pax^2 u dx\leq 6r\|u\|_{\dot{H}^{3}}\|\pax u\|_{L^4}\|\pax^2 u\|_{L^4}\leq C\|u\|_{H^{3}}^{2}\|u\|_{L^\infty}^{0.5}\|\pax u\|^{0.5}_{L^\infty}.
$$
We define the energy 
$$
E=\|u\|^2_{H^3}+\|v\|^2_{H^{3+\beta/2}}.
$$
We have
\begin{align}
\frac{d}{dt}E&\leq C (E+1)^3+\|\Lambda^\beta \pax^3v\|_{L^2}\|u\|_{\dot{H}^{3}}\|u\|_{L^\infty}\nonumber\\
&\quad -\frac{\nu}{2}\|v\|_{\dot{H}^{3+\beta}}^2-\lambda\|v\|_{\dot{H}^{3+\beta/2}}^2+\frac{2}{\nu}\|u\|^2_{\dot{H}^{3}}-\mu\|u\|_{\dot{H}^{3+\alpha/2}}^2
\nonumber\\
&\leq C(\nu) (E+1)^4 -\mu\|u\|_{\dot{H}^{3+\alpha/2}}^2 -\frac{\nu}{4}\|v\|_{\dot{H}^{3+\beta}}^2.\label{5}
\end{align}
Due to the previous inequality \eqref{5}, we obtain the desired bound for the energy. Moreover, from \eqref{5}, we get that $(u,v)\in L^2_t H^{3+\alpha/2}_x\times L^2_t H^{3+\beta}_x$. 

\textbf{Part 2. ({existence}) }
Once we have the energy estimates, we consider a family of Friedrichs mollifiers $\mathcal{J}_\epsilon$ and define the regularized initial data
$$
u^\epsilon(x,0)=\jeps u_0(x)\geq0,\,
v^\epsilon(x,0)=\jeps v_0(x)\geq0,
$$
and the regularized problems
\begin{eqnarray*}
\pat u^\epsilon & = & -\mu \jeps \Lambda^\alpha \jeps u^\epsilon+\jeps\pax\cdot(\jeps u^\epsilon(\Lambda^{\beta-1} H \jeps v^\epsilon)) +ru^\epsilon(1-u^\epsilon)\\ 
\pat v^\epsilon & = & -\jeps \nu\Lambda^\beta \jeps v^\epsilon-\lambda \jeps v^\epsilon+\jeps u^\epsilon. 
\end{eqnarray*}

Applying Picard's Theorem in $H^3\times H^{3+\beta/2}$, we find a solution $(u^\epsilon,v^\epsilon)$ to these approximate problems. These solutions exists up to time $T^\epsilon$. Furthermore, as $(u^\epsilon,v^\epsilon)$ verify the same energy estimates, we can take $T=T(u_0,v_0)$ independent of the regularizing parameter $\epsilon$. This concludes the existence part. \\
\textbf{Part 3. ({uniqueness}) }
To prove the uniqueness we argue by contradiction. Let us assume that there are two different solutions corresponding to the same initial data $(u_0,v_0)\in L^2\times H^{\beta/2}$. We write $(u_i,v_i)$, $i=1,2$ for these solutions and define $\bar{u}=u_1-u_2,$ $\bar{v}=v_1-v_2$. Then we have the bounds
$$
\frac{d}{dt}\|\bar{v}(t)\|_{H^{\beta/2}}^2+\nu\|\bar{v}(t)\|_{\dot{H}^{\beta}}^2\leq c(\nu)\|\bar{u}(t)\|_{L^2}^2,
$$
\begin{align*}
\frac{d}{dt}\|\bar{u}(t)\|_{L^2}^2&\leq 2\left|\int_\TT \pax \bar{u}[\bar{u}\Lambda^{\beta-1}Hv_1+u_2\Lambda^{\beta-1}H\bar{v}]dx\right|\\
&\leq \|\bar{u}(t)\|_{L^2}^2\|\Lambda^\beta v_1(t)\|_{L^\infty}+\|\bar{u}(t)\|_{L^2}\|\pax u_2(t)\|_{L^\infty}\|\Lambda^{\beta-1} \bar{v}(t)\|_{L^2}\\
&\quad +\|\bar{u}(t)\|_{L^2}\|u_2(t)\|_{L^\infty}\|\Lambda^{\beta} \bar{v}(t)\|_{L^2}.
\end{align*}
We use $\beta-1\leq \beta/2$ and Young's inequality to get
\begin{align*}
\frac{d}{dt}(\|\bar{u}(t)\|_{L^2}^2+\|\bar{v}(t)\|_{H^{\beta/2}}^2)&\leq (\|\bar{u}(t)\|_{L^2}^2+\|\bar{v}(t)\|_{H^{\beta/2}}^2)\\
&\quad\times\left(c(\nu)+\|\Lambda^\beta v_1(t)\|_{L^\infty}\right.\\
&\quad+\left.\frac{1}{2}\|\pax u_2(t)\|_{L^\infty}+c(\nu)\|u_2(t)\|_{L^\infty}^2\right).
\end{align*}
Finally we get 
\begin{align*}
\|\bar{u}(t)\|_{L^2}^2+\|\bar{v}(t)\|_{H^{\beta/2}}^2&\leq (\|\bar{u}_0\|_{L^2}^2+\|\bar{v}_0\|_{H^{\beta/2}}^2)\\
&\quad \times e^{[c(\nu)t+\int_0^t\|\Lambda^\beta v_1(s)\|_{L^\infty}+0.5\|\pax u_2(s)\|_{L^\infty}+c(\nu)\|u_2(s)\|_{L^\infty}^2]ds}.
\end{align*}
From the last inequality we obtain the uniqueness. \\
\textbf{Part 4. ({preservation of sign}) }
To finish the proof of the entire Theorem \ref{localexistence}, we need to show that for non-negative initial data the solution remains non-negative as well. 

To obtain pointwise bounds we apply the techniques developed in \cite{BG,cor2,CGO,GO,GH} and the references therein. Let $u(x,t)$ be a classical solution with a non-negative initial data and write $x_{t}\in \TT$ for a point such that $\min_x u(x,t)=u(x_t,t)$. Evaluating the equation \eqref{eqa1} at the point of minimum and using the kernel expression for $\Lambda^\alpha$, we have
$$
\frac{d}{dt}u(x_{t},t)\geq u(x_{t},t)\left[\Lambda^{\beta} v(x_{t},t)+r(1-u(x_{t},t))\right],\;t\geq 0,
$$
hence
$$
u(x_{t},t) \ge u_0(x_{0})e^{\int_0^t\Lambda^{\beta} v(x_{y},y)+r(1-u(x_{y},y)) dy}.
$$

Thus, $ u(x_{t},t) \ge 0$ since $u_0(x)\geq0$ and we conclude the claim. For the equation \eqref{eqa2} we can proceed similarly and we get
$$
v(x_t,t)\geq v_0(x)e^{-\lambda t}+e^{-\lambda t}\int_0^t u(x_s,s)ds\geq0.
$$
This ends the proof of Theorem  \ref{localexistence}.

Let us remark here that in the above proof and in the remainder of this text, we reinterpret certain terms in language of duality pairs, where there is no sufficient regularity to perform some intermediate computations. This applies in particular to the time derivative of a single (spatial) Fourier mode.
\section{Proof of Theorem \ref{continuation}: Continuation criterion}\label{S2b}

\textbf{Part 1.}
Let us write
\begin{equation}\label{6}
\int_0^T\|\Lambda^\beta v(s)\|_{L^\infty}+\|\pax u (s)\|_{L^\infty} ds=M<\infty.
\end{equation}
Now, assuming the finiteness of $M$, we need to obtain a bound for the energy
$$
E(t)=\|u(t)\|^2_{H^3}+\|v(t)\|^2_{H^{3+\beta/2}}.
$$
First notice that
\begin{align*}
\frac{1}{2}\frac{d}{dt}\|u\|_{L^2}^2 & \leq \frac{1}{2}\|\Lambda^\beta v\|_{L^\infty}\|u\|^2_{L^2}+r\|u\|_{L^2}^2,\\
\frac{1}{2}\frac{d}{dt}\|v\|_{L^2}^2 & \leq \frac{1}{2}\|u\|_{L^2}^2.\\
\end{align*}
Thus
\begin{align*}
\sup_{0\leq t \leq T}\|u(t)\|_{L^2}^2 & \leq \|u_0\|_{L^2}^2e^{M+rT},\\
\sup_{0\leq t \leq T}\|v(t)\|_{L^2}^2 & \leq (\|u_0\|_{L^2}^2e^M+\|v_0\|_{L^2}^2)e^T.
\end{align*}
Let $x_{ut}$ denote the point where $u(t)$ reaches its maximum and let $x_{vt}$ denote the point where $v(t)$ reaches its maximum. Then, $u(x_{ut},t)$ and $v(x_{vt},t)$ are Lipschitz functions and, as a consequence, applying Rademacher Theorem, are almost everywhere differentiable. Moreover, using the expressions for the kernels $\Lambda^s$ together with the positivity of $u$ and $v$, we have
\begin{align*}
\frac{d}{dt}u(x_{ut},t)& \leq u(x_{ut},t)\Lambda^\beta v(x_{ut},t)+ru(x_{ut},t),\\
\frac{d}{dt}v(x_{vt},t)& \leq u(x_{vt},t).
\end{align*}
As a consequence,
\begin{align*}
\sup_{0\leq t \leq T} \|u(t)\|_{L^\infty} & \leq \|u_0\|_{L^\infty}e^{M+rT},\\ 
\sup_{0\leq t \leq T}\|v(t)\|_{L^\infty} & \leq \|v_0\|_{L^\infty}+ \|u_0\|_{L^\infty}e^MT.
\end{align*}
Notice that to bound the lower order norms we have used merely 
$$
\int_0^T\|\Lambda^\beta v(s)\|_{L^\infty}ds<\infty.
$$ 
For the higher seminorm,
$$
y(t)=\|u(t)\|^2_{\dot{H}^3}+\|v(t)\|^2_{\dot{H}^{3+\beta/2}},
$$
due to energy estimates, we have
$$
\frac{dy}{dt}\leq  c(M,\nu)(\|\Lambda^\beta v\|_{L^\infty}+\|\pax u\|_{L^\infty}+\|u_0\|_{L^\infty}e^{M+rT})y(t),
$$
and using Gronwall's inequality we conclude the result in the case when $M$ of \eqref{6} is finite.

\textbf{Part 2.} To simplify notation we write
$$
\int_0^T\|u(s)\|^2_{L^\infty}+\|\Lambda^\beta v(s)\|_{L^\infty}+\|u(s)\|_{L^\infty}\|\Lambda^\beta v(s)\|_{L^\infty}ds=\tilde{M}.
$$
The idea for this second part is similar. Assuming the boundedness of $\tilde{M}$, it suffices to obtain global bounds for $M$ defined in \eqref{6}. To this end, we are going to use $\tilde{M}<\infty$ to bound $\|u(t)\|_{H^2}.$ Then we can use Sobolev's embedding to obtain the estimate for $M$. First we compute
$$
\frac{1}{2}\frac{d}{dt}\|\Lambda^{\beta}\pax^2v\|_{L^2}^2+\lambda\|\Lambda^{\beta}\pax^2v\|_{L^2}^2\leq-\nu\|\Lambda^{1.5\beta}\pax^2 v\|_{L^2}^2+\|\Lambda^{\beta/2}\pax^2u\|_{L^2}\|\Lambda^{1.5\beta}\pax^2v\|_{L^2},
$$
thus
$$
\frac{1}{2}\frac{d}{dt}\|\Lambda^{\beta}\pax^2v\|_{L^2}^2+\lambda\|\Lambda^{\beta}\pax^2v\|_{L^2}^2\leq\frac{1}{2\nu}\|\Lambda^{\beta/2}\pax^2u\|_{L^2}^2.
$$
Now we have
\begin{align*}
\frac{1}{2}\frac{d}{dt}\|\pax^2u\|_{L^2}^2 & \leq -\mu\|\Lambda^{\alpha/2}\pax^2 u\|_{L^2}^2+c\|\Lambda^\beta v\|_{L^\infty}\|\pax^2u\|_{L^2}^2\\
& \quad+c\|\pax^2u\|_{L^2}\left(\|u\|_{L^\infty}^{0.5}\|\pax^2u\|_{L^2}^{0.5}\|\Lambda^\beta v\|_{L^\infty}^{0.5}\|\Lambda^{\beta}\pax^2 v\|_{L^2}^{0.5}\right)\\
&\quad+\|u\|_{L^\infty}\|\Lambda^{\beta}\pax^2 v\|_{L^2}\|\pax^2u\|_{L^2}+r\|\pax^2u\|_{L^2}^2+2r\|\pax^2u\|_{L^2}\|\pax u\|_{L^4}^2.
\end{align*}
Using the previous bound and Young's inequality, we get
\begin{eqnarray*}
\frac{1}{2}\frac{d}{dt}\left(\|\pax^2u\|_{L^2}^2+\|\Lambda^{\beta}\pax^2v\|_{L^2}^2\right)&\leq& -\mu\|\Lambda^{\alpha/2}\pax^2 u\|_{L^2}^2+\frac{1}{2\nu}\|\Lambda^{\beta/2}\pax^2u\|_{L^2}^2\\
&&+c\|\Lambda^\beta v\|_{L^\infty}\|\pax^2u\|_{L^2}^2+\frac{1}{2\lambda}\|u\|_{L^\infty}^2\|\pax^2u\|_{L^2}^2\\
&&+c\|\pax^2u\|_{L^2}^2\|u\|_{L^\infty}^{0.5}\|\Lambda^\beta v\|_{L^\infty}^{0.5}\\
&&+c(\lambda)\|\pax^2u\|_{L^2}^2\|u\|_{L^\infty}\|\Lambda^\beta v\|_{L^\infty}\\
&&+cr\|\pax^2u\|_{L^2}^2\left(\|u\|_{L^\infty}+1\right).
\end{eqnarray*}
Hence we obtain a bound for the $H^2$ seminorm. In the same way we get a bound for the $L^2$ norm. Since we have a bound for $H^2$, using Sobolev embedding, we arrive at $$
\int_0^T\|\pax u(s)\|_{L^\infty}ds\leq c\int_0^T\|u(s)\|_{H^2}ds
\leq cT\|u_0\|_{H^2}\exp\left(c(\lambda)\tilde{M}\right).
$$
Theorem  \ref{continuation} is showed.

\section{Proof of Theorem \ref{globalwiener}: Global existence using the Wiener's algebra}\label{S10}
Our aim here is to prove Theorem  \ref{globalwiener}. We start this section with two preliminary results concerning lower order norms. The behaviour is quite different depending on the value of $r$. If $r=0$, we have 
\begin{lem}\label{L1norm}
Let $(u_0,v_0)$ be two non-negative, smooth initial data for equation \eqref{eqa1}-\eqref{eqa2} with $r=0$. Then, the solutions $(u,v)$ are non-negative functions. Moreover, if the initial data $(u_0,v_0)$ are in $L^1(\TT)$, the solutions $(u,v)$ verify
\begin{itemize}
\item
$
\|u(t)\|_{L^1(\TT)}=\|u_0\|_{L^1(\TT)}\quad \forall \;0\leq t\leq T_{max}
$
\item
$
\|v(t)\|_{L^1(\TT)}=\frac{\|u_0\|_{L^1(\TT)}}{\lambda}+\left(\|v_0\|_{L^1(\TT)}-\frac{\|u_0\|_{L^1(\TT)}}{\lambda}\right)e^{-\lambda t}\quad \forall \;0\leq t\leq T_{max}.
$
\end{itemize}
\end{lem}
For the sake of brevity we do not write the proof. For $r>0$ the analogous result reads (see also \cite{Hillen1}).
\begin{lem}\label{L1normr}
Let $(u_0,v_0)$ be two non-negative, smooth initial data for equation \eqref{eqa1}-\eqref{eqa2} with $r>0$. Let us define
$$
\mathcal{N}=\max\{\|u_0\|_{L^1(\TT)},2\pi\}.
$$
Then the solutions $(u,v)$ verify
\begin{itemize}
\item
$
\|u(t)\|_{L^1(\TT)}\leq \mathcal{N},\quad \forall \; 0\leq t\leq T_{max}$\\
$
\int_0^t\|u(s)\|^2_{L^2(\TT)}ds\leq \mathcal{N}t+2\mathcal{N},\quad \forall \; 0\leq t\leq T_{max},
$
\item
$
\|v(t)\|_{L^1(\TT)}\leq \max\{\|v_0\|_{L^1},\mathcal{N}/\lambda\},\quad \forall \;0\leq t\leq T_{max}.
$
\end{itemize}
\end{lem}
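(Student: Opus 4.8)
The plan is to integrate each of the two equations over $\TT$ and thereby reduce the three claims to elementary ordinary differential (in)equalities for the spatial $L^1$ norms. Throughout I use that $u,v\geq0$, which is the sign-preservation statement already contained in Theorem \ref{localexistence}, so that $\|u(t)\|_{L^1}=\int_\TT u\,dx$ and likewise for $v$. Integrating \eqref{eqa1} over $\TT$, the diffusive term disappears because $\int_\TT\Lambda^\alpha u\,dx=\widehat{\Lambda^\alpha u}(0)\,|\TT|=0$, and the chemotactic term disappears as the integral of an exact derivative of a periodic function. Writing $m(t)=\|u(t)\|_{L^1}$, this leaves the mass balance
$$
\frac{d}{dt}m(t)=r\,m(t)-r\int_\TT u^2(x,t)\,dx.
$$

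For the first bound I would lower-bound the quadratic term by Cauchy--Schwarz, $\int_\TT u^2\,dx\geq m(t)^2/(2\pi)$, turning the identity into the logistic differential inequality $\dot m\leq r\,m\,(1-m/(2\pi))$. Its only positive equilibrium is $m=2\pi$ and it is attracting from above, so $m$ can never increase past $\max\{m(0),2\pi\}$; since $m(0)=\|u_0\|_{L^1}$, this gives exactly $\|u(t)\|_{L^1}\leq\mathcal{N}$. For the space-time bound I would instead integrate the mass balance identity itself in time, obtaining
$$
r\int_0^t\|u(s)\|_{L^2}^2\,ds=r\int_0^t m(s)\,ds-\big(m(t)-m(0)\big),
$$
and then use $0\leq m(s)\leq\mathcal{N}$ together with $m(0)\leq\mathcal{N}$ and $m(t)\geq0$ to conclude the asserted space-time bound on $\int_0^t\|u(s)\|_{L^2}^2\,ds$.

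The $L^1$ estimate for $v$ is the softest of the three. Integrating \eqref{eqa2} over $\TT$ and again discarding $\int_\TT\Lambda^\beta v\,dx=0$, I get the linear scalar equation
$$
\frac{d}{dt}\|v(t)\|_{L^1}=-\lambda\|v(t)\|_{L^1}+\|u(t)\|_{L^1}\leq-\lambda\|v(t)\|_{L^1}+\mathcal{N},
$$
using the bound on $\|u\|_{L^1}$ just established. Solving this differential inequality gives $\|v(t)\|_{L^1}\leq e^{-\lambda t}\|v_0\|_{L^1}+(1-e^{-\lambda t})\mathcal{N}/\lambda$, a convex combination of $\|v_0\|_{L^1}$ and $\mathcal{N}/\lambda$ that is dominated by $\max\{\|v_0\|_{L^1},\mathcal{N}/\lambda\}$.

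None of the steps is technically deep; the one genuinely substantive point, and the place I would be most careful, is the Cauchy--Schwarz lower bound that converts the mass identity into a logistic inequality -- this is what produces the threshold $2\pi$ inside $\mathcal{N}$ and, through the time-integrated version, delivers the space-time $L^2$ control of $u$ that is the real payoff of the logistic damping for the later global-existence arguments. A minor technical point to address is the justification of differentiating the $L^1$ norms in time, which is legitimate because Theorem \ref{localexistence} provides smooth solutions on $[0,T_{max})$.
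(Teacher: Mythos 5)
Your proof is correct and follows essentially the same route as the paper's: the mass identity $\frac{d}{dt}\|u\|_{L^1}=r\|u\|_{L^1}-r\|u\|_{L^2}^2$, the Cauchy--Schwarz (in the paper, Jensen) bound $\|u\|_{L^1}^2\le 2\pi\|u\|_{L^2}^2$ turning it into a logistic inequality, time-integration of the identity for the space-time $L^2$ bound, and the linear ODE for $\|v\|_{L^1}$. The only point to watch is that with $r$ kept general your time-integrated identity yields $\int_0^t\|u(s)\|_{L^2}^2\,ds\le \mathcal{N}t+\mathcal{N}/r$, which matches the stated constant $2\mathcal{N}$ only after the normalization $r=1$ that the paper adopts at the outset of its proof.
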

\begin{proof} We take $r=1$ without losing generality. The ODE for $\|u(t)\|_{L^1}$ is
\begin{equation}\label{normL1}
\frac{d}{dt}\|u(t)\|_{L^1}=\|u(t)\|_{L^1}-\|u(t)\|_{L^2}^2.
\end{equation}
Recalling Jensen's inequality
$
\|u(t)\|_{L^1}^2\leq 2\pi\|u(t)\|_{L^2}^2,
$
we get
$$
\frac{d}{dt}\|u(t)\|_{L^1}\leq \|u(t)\|_{L^1}\left(1-\frac{1}{2\pi}\|u(t)\|_{L^1}\right).
$$
From this inequality we conclude the first part of the result. Given $t>0$, we integrate \eqref{normL1} between $0$ and $t$ and obtain
$$
\|u(t)\|_{L^1}-\|u_0\|_{L^1}=\int_0^t\|u(s)\|_{L^1}ds-\int_0^t\|u(s)\|_{L^2}^2ds,
$$
thus
$$
\int_{0}^{t}\|u(s)\|_{L^2(\TT)}^2ds\leq \|u_0\|_{L^1}-\|u(t)\|_{L^1}
+\sup_{0\leq s\leq t}\|u(s)\|_{L^1}t\leq \mathcal{N}t+2\mathcal{N},
$$
and we conclude the second part. The bound for the $L^1$ norm of $v$ is straightforward and we get
$$
\|v(t)\|_{L^1(\TT)}\leq\frac{\mathcal{N}}{\lambda}\\
+\left(\|v_0\|_{L^1(\TT)}-\frac{\mathcal{N}}{\lambda}\right)e^{-\lambda t},\quad \forall \; t\geq0,\,(\lambda>0),
$$
or
$$
\|v(t)\|_{L^1(\TT)}\leq\mathcal{N}t+\|v_0\|_{L^1(\TT)},\quad \forall \;t\geq0,\, (\lambda=0).
$$
\end{proof}

Now we turn to the proof of Theorem  \ref{globalwiener}. Recall that we assume there $1\leq\beta\leq2\leq1+\alpha$ and $\mu>1, r=0$.

\begin{proof}[Proof of Theorem \ref{globalwiener}]

We denote by $\hat{f}(k)$ the $k-$th Fourier mode of a function $f$. Then, as stated in Lemma \ref{L1norm}, we have $\hat{u}(0,t)=\langle u_0\rangle$. We will study the evolution of 
$$
\mathcal{E}(t)=|u(t)|_{1}+|v(t)|_{\beta}.
$$
Our goal is to obtain (under appropriate assumptions) the maximum principle
\begin{equation}\label{7}
\mathcal{E}(t)\leq \mathcal{E}(0).
\end{equation}
Having this together with Fourier series that imply
\begin{align*}
\pax u&=\sum_{j}ij\hat{u}(j)e^{ijx}\Rightarrow \|\pax u\|_{L^\infty}\leq|u|_{1},\\
\Lambda^\beta v&=\sum_{j}|j|^{\beta}\hat{v}(j)e^{ijx}\Rightarrow \|\Lambda^\beta v\|_{L^\infty}\leq|v|_{\beta},
\end{align*}
we arrive at
$$
\int_0^T \|\pax u(s)\|_{L^\infty}+\|\Lambda^\beta v(s)\|_{L^\infty}ds\leq \mathcal{E}(0)T.
$$
Using the continuation argument in Theorem \ref{continuation}, we conclude the proof.

It remains to obtain the maximum principle \eqref{7}. The system \eqref{eqa1}-\eqref{eqa2} reads
\begin{align*}
\frac{d}{dt} |\hat{u}(k)||k|  &=  -\mu|k|^{1+\alpha} |\hat{u}(k)|+\frac{|k|\bar{\hat{u}}(k)}{|\hat{u}(k)|}\sum_{j}j\hat{u}(j)\frac{k-j}{|k-j|}|k-j|^{\beta-1} \hat{v}(k-j)\\
 &\quad +\frac{|k|\bar{\hat{u}}(k)}{|\hat{u}(k)|}\sum_{j}\hat{u}(k-j)|j|^{\beta} \hat{v}(j),
\end{align*}
$$
\frac{d}{dt} |\hat{v}(k)||k|^{\beta}   =  -\nu|k|^{2\beta} |\hat{v}(k)|-\lambda |\hat{v}(k)||k|^{\beta}+\frac{\bar{\hat{v}}(k)}{|\hat{v}(k)|}|k|^{\beta}\hat{u}(k),
$$
so, using Fubini-Tonelli Theorem
$$
\frac{d}{dt}|u(t)|_{1}\leq -\mu|u|_{1+\alpha}+2|u|_{1}|v|_{\beta}+|u|_{2}|v|_{\beta-1}+|u|_{0}|v|_{\beta+1},
$$
and
\begin{align*}
\frac{d}{dt}\mathcal{E}& \leq -\mu|u|_{1+\alpha}+2|u|_{1}|v|_{\beta}+|u|_{2}|v|_{\beta-1}+|u|_{0}|v|_{\beta+1}-\nu|v|_{2\beta}-\lambda |v|_{\beta}+|u|_{\beta}.
\end{align*}
Using Young's inequality and the assumptions we get
\begin{eqnarray*}
\frac{d}{dt}\mathcal{E}&\leq& (|v|_{\beta}+1-\mu)|u|_{2}+
(|u|_{1}+\langle u_0\rangle-\nu)|v|_{2\beta}+(2|u|_{1}-\lambda )|v|_{\beta}\\
&\leq& (\mathcal{E}+1-\mu)|u|_{2}+
(\mathcal{E}+\langle u_0\rangle-\nu)|v|_{2\beta}+(2\mathcal{E}-\lambda )|v|_{\beta},
\end{eqnarray*}
thus, if
$$
\mathcal{E}(0)<\min\{\mu-1,\nu-\langle u_0\rangle,\lambda/2\},
$$ 
we obtain a decay (consequently, a global bound) for $\mathcal{E}(t)$. 
\end{proof}
\section{Proof of Theorems \ref{globalpp1} and \ref{globalpp2}: Global existence for $\alpha\geq1$}\label{S3}
Now we proceed with the proof of the global existence of solutions for large data.

\begin{proof}[Proof of Theorem \ref{globalpp1}] Recall that $T$ is an arbitrary fixed number such that $0<T<\infty$. We will consider times $0\leq t\leq T$. As $\alpha>1$, we can take $\frac{\alpha-1}{2}=\delta>0$ as a fixed parameter. 

Let us outline the proof: in the first three steps, we obtain \emph{a priori} estimates. {\emph {i.e.}} we assume there that we have a solution $(u,v)$ as smooth as required. In Step 4, we construct the solutions as the limit of approximate problems satisfying the same \emph{a priori} estimates as in Steps 1, 2 and 3. 

\textbf{Step 1. ({\emph{a priori} estimates I}) }
In this step we obtain estimates showing  
\begin{align*}
u&\in L^\infty(0,T;L^2(\TT))\cap L^2(0,T;H^{\alpha/2}(\TT))\\
v&\in L^\infty(0,T;H^{\beta-\alpha/2}(\TT))\cap L^2(0,T;H^{3\beta/2-\alpha/2}(\TT)).
\end{align*}
Let us compute the evolution of the $L^2$ norm of $u$ in the case $r>0$. For $r=0$ the proof is analogous. We get
\begin{eqnarray*}
\frac{1}{2}\frac{d}{dt}\|u(t)\|_{L^2}^2&\leq& -\mu\|u(t)\|_{\dot{H}^{\alpha/2}}^2+\frac{1}{2}
\|\Lambda^{\alpha/2}(u(t))^2\|_{L^2}\|\Lambda^{\beta-\alpha/2} v(t)\|_{L^2}\\
&&+r\|u(t)\|_{L^2}^2-r\|u(t)\|_{L^3}^3\\
&\leq& -\mu\|u(t)\|_{\dot{H}^{\alpha/2}}^2+r\|u(t)\|_{L^2}^2-r\|u(t)\|_{L^3}^3\\
&&+C_{KP}(\alpha)\|u\|_{L^\infty}\|u\|_{\dot{H}^{\alpha/2}}\|\Lambda^{\beta-\alpha/2} v\|_{L^2}\\
&\leq& -\mu\|u(t)\|_{\dot{H}^{\alpha/2}}^2+r\|u(t)\|_{L^2}^2-r\|u(t)\|_{L^3}^3\\
&&+C_{KP}(\alpha)\langle u(t)\rangle\|u(t)\|_{\dot{H}^{\alpha/2}}\|\Lambda^{\beta-\alpha/2} v(t)\|_{L^2}\\
&&+C_{KP}(\alpha)C_{GN}(\alpha)\|u(t)-\langle u(t)\rangle\|_{L^1}^{\delta/(1+\delta)}\|u(t)\|_{\dot{H}^{\alpha/2}}^{(2+\delta)/(1+\delta)}\\
&&\times\|\Lambda^{\beta-\alpha/2} v(t)\|_{L^2},
\end{eqnarray*}
where we have used Lemma \ref{lemaaux2} together with the following interpolation inequality
\begin{equation}\label{C_GN}
\left|\|u\|_{L^\infty}-\langle u\rangle\right|\leq \|u-\langle u\rangle\|_{L^\infty}\leq C_{GN}(\alpha)\|u-\langle u\rangle\|_{L^1}^{\delta/(1+\delta)}\|u\|_{\dot{H}^{\alpha/2}}^{1/(1+\delta)}.
\end{equation}
Using Young's inequality and Lemmas \ref{L1norm} and \ref{L1normr}, we obtain
\begin{eqnarray}\label{uL2}
\frac{d}{dt}\|u(t)\|_{L^2}^2&\leq& -\mu\|u(t)\|_{\dot{H}^{\alpha/2}}^2+r\|u(t)\|_{L^2}^2-r\|u(t)\|_{L^3}^3+\frac{\mu}{2}\|u(t)\|_{\dot{H}^{\alpha/2}}^{2}\nonumber\\
&&+\frac{(C_{KP}(\alpha)C_{GN}(\alpha))^{\frac{2+2\delta}{\delta}}(2\mathcal{N})^{\frac{2+2\delta}{1+\delta}}\|\Lambda^{\beta-\alpha/2} v(t)\|_{L^2}^{\frac{2+2\delta}{\delta}}}{\mu}\nonumber\\
&&+\frac{(C_{KP}(\alpha)\mathcal{N})^2}{\mu}\|\Lambda^{\beta-\alpha/2} v(t)\|_{L^2}^{2}.
\end{eqnarray}

Now, fix $t>0$ and consider the equation for the $k$-th Fourier coefficient of $v$
$$
\frac{d}{dt}\hat{v}(k,t)=-\nu|k|^\beta\hat{v}(k,t)-\lambda\hat{v}(k,t)+\hat{u}(k,t).
$$ 
Solving this ODE, we get
\begin{equation}\label{fourierv}
e^{(\nu|k|^\beta+\lambda)t}\hat{v}(k,t)=\hat{v_0}(k)+\int_0^t e^{(\nu|k|^\beta+\lambda)s}\hat{u}(k,s)ds.
\end{equation}

As $v_0\in H^{\gamma}$ with $\gamma=\beta-\alpha/2<\beta-0.5$, using \eqref{fourierv}, we get
\begin{align}
|k|^{\beta-\alpha/2} e^{(\nu|k|^\beta+\lambda)t}|\hat{v}(k,t)|&\leq |k|^{\beta-\alpha/2}|\hat{v_0}(k)|\nonumber\\
&\quad +\int_0^t |k|^{\beta-\alpha/2} e^{(\nu|k|^\beta+\lambda)s}|\hat{u}(k,s)|ds\nonumber\\
&\leq |k|^{\beta-\alpha/2}|\hat{v_0}(k)|+\frac{|k|^{\beta-\alpha/2}}{\nu|k|^\beta+\lambda}\mathcal{N}e^{(\nu|k|^\beta+\lambda)t},\label{fourierv2}
\end{align}
hence
$$
\int_0^t\|\Lambda^{\beta-\alpha/2} v(s)\|^p_{L^2}ds\leq t C(\alpha,\beta,\lambda,\|u_0\|_{L^1(\TT)},\|v_0\|_{H^{\beta-\alpha/2}(\TT)},\nu,p).
$$ 
Consequently, using Lemma \ref{L1normr}, we have that
\begin{multline*}
\|u(t)\|_{L^2}^2+\frac{\mu}{2}\int_0^t\|u(s)\|_{\dot{H}^{\alpha/2}}^2+r\|u(s)\|_{L^3}^3 ds\\
\leq \|u_0\|_{L^2}^2+\mathcal{N}t+2\mathcal{N}+t C(\alpha,\beta,\nu,\lambda,\|u_0\|_{L^1(\TT)},\|v_0\|_{H^{\beta-\alpha/2}(\TT)}).
\end{multline*}
In the case $\lambda>0$ we obtain simply
$$
\|v(t)\|^2_{L^2}+2\nu\int_0^t\|v(s)\|^2_{\dot{H}^{\beta/2}}ds\leq \|v_0\|^2_{L^2}+c(\lambda)\int_0^t\|u(s)\|_{L^2}^2ds.
$$

Testing the equation for $v$ against $\Lambda^{2\beta-\alpha} v$ and using self-adjointness we get
$$
\frac{1}{2}\frac{d}{dt}\|v(t)\|_{\dot{H}^{\beta-\alpha/2}}^2+\lambda\|v(t)\|_{\dot{H}^{\beta-\alpha/2}}^2+\nu\|v(t)\|_{\dot{H}^{3\beta/2-\alpha/2}}^2\leq \|u\|_{\dot{H}^{\alpha/2}}\|v(t)\|_{\dot{H}^{2\beta-\alpha-\alpha/2}}.
$$
As $\beta\leq 2<2\alpha$, we get $2\beta-\alpha-\alpha/2\leq 1.5\beta-\alpha/2$ and we can use Young's and Poincar\'e's inequalities to conclude this step.

\textbf{Step 2. ({\emph{a priori} estimates II}) } In this part we obtain estimates showing  
\begin{align*}
u&\in L^\infty(0,T;H^{\alpha/2}(\TT))\cap L^2(0,T;H^{\alpha}(\TT)),\\
v&\in L^\infty(0,T;H^{\beta/2+\alpha/2}(\TT))\cap L^2(0,T;H^{\beta+\alpha/2}(\TT)).
\end{align*}
Testing the equation for $v$ against $\Lambda^{\alpha+\beta}v$, we get
$$
\frac{1}{2}\frac{d}{dt}\|v(t)\|_{\dot{H}^{\beta/2+\alpha/2}}^2+\nu\|v(t)\|_{\dot{H}^{\beta+\alpha/2}}^2\leq \|u(t)\|_{\dot{H}^{\alpha/2}}\|v(t)\|_{\dot{H}^{\beta+\alpha/2}}.
$$
The previous inequality implies
\begin{multline*}
\|v(t)\|^2_{\dot{H}^{\beta/2+\alpha/2}}+\nu\int_0^t\|v(s)\|^2_{\dot{H}^{\beta+\alpha/2}}ds\\
\leq\|v_0\|^2_{\dot{H}^{\beta/2+\alpha/2}}+ 
c(\nu)\int_0^t\|\Lambda^{\alpha/2}u(s)\|_{L^2}^2\leq \|v_0\|^2_{\dot{H}^{\beta/2+\alpha/2}}+C
\end{multline*}
with constant 
$$
C=C(\alpha,\beta,\mu,\nu,\lambda,\|u_0\|_{L^1(\TT)},\|u_0\|_{L^2(\TT)},\|v_0\|_{H^{\beta-\alpha/2}(\TT)},T).
$$
We compute
\begin{eqnarray*}
\frac{d}{dt}\|u(t)\|_{\dot{H}^{\alpha/2}}^2&=&-\mu\int_\TT|\Lambda^{\alpha}u|^2dx+
\int_{\TT}u\Lambda^\beta v\Lambda^{\alpha}u dx+r\int_\TT|\Lambda^{\alpha/2}u|^2dx\\
&&+
\int_{\TT}\pax u\Lambda^{\beta-1}H v\Lambda^{\alpha}u dx-r\int_{\TT}\Lambda^{\alpha/2}(u^2)\Lambda^{\alpha/2}udx\\
&\leq& -\frac{3\mu}{4}\|u(t)\|_{\dot{H}^{\alpha}}^2+
c(\mu)\|u(t)\|_{L^2}^2\|\Lambda^\beta v(t)\|_{L^\infty}^2\\
&&+r\|u(t)\|_{\dot{H}^{\alpha/2}}^2+\|u\|_{\dot{H}^1}\|\Lambda^{\beta-1}H v\|_{L^\infty}\|u\|_{\dot{H}^\alpha}\\
&&+c(r)\|\Lambda^{\alpha/2}u(t)\|_{L^\infty}\|u\|_{L^2}\|u(t)\|_{\dot{H}^{\alpha/2}}.
\end{eqnarray*}
Using the interpolation inequalities 
\begin{align*}
\|u\|_{\dot{H}^1}&\leq c\|u\|_{\dot{H}^\alpha}^{1/\alpha}\|u\|_{L^2}^{(\alpha-1)/\alpha},\\
\|\Lambda^{\alpha/2}u\|_{L^\infty}&\leq c\|u\|_{\dot{H}^\alpha},\\
\|\Lambda^\beta v(t)\|_{L^\infty}^2&\leq c\|v(t)\|_{\dot{H}^{\beta+\alpha/2}}^2,\\
\|\Lambda^{\beta-1}H v\|_{L^\infty}&\leq c\|v\|_{\dot{H}^{\beta-1+\alpha/2}}\leq c\|v\|_{\dot{H}^{\beta/2+\alpha/2}},
\end{align*}
we obtain
$$
\|u(t)\|_{\dot{H}^{\alpha/2}}^2 +\frac{\mu}{2}\int_0^t\|u(t)\|_{\dot{H}^{\alpha}}^2 \leq \|u_0\|_{\dot{H}^{\alpha/2}}^2+C,
$$
where the constant depends on
$$
C=C(\alpha,\beta,\mu,\nu,\lambda,\|u_0\|_{L^1(\TT)},\|u_0\|_{L^2(\TT)},\|v_0\|_{H^{\beta-\alpha/2}(\TT)},\|v_0\|_{H^{\beta/2+\alpha/2}(\TT)},T).
$$
Notice that, in the case $\alpha>1.5$, we have
$$
\int_0^T\|\pax u(t)\|_{L^\infty}+\|\Lambda v(t)\|_{L^\infty}dt\leq c\int_0^T\|u(t)\|_{H^\alpha}+\|v(t)\|_{H^{\beta+\alpha/2}}dt\leq C,
$$
so, in this case, we are done with the entire proof.

\textbf{Step 3. ({\emph{a priori} estimates III}) }  In this step we obtain that
\begin{align*}
u&\in L^\infty(0,T;H^\alpha(\TT))\cap L^2(0,T;H^{3\alpha/2}(\TT)),\\
v&\in L^\infty(0,T;H^{\beta/2+\alpha}(\TT))\cap L^2(0,T;H^{\beta+\alpha}(\TT)).
\end{align*}
Testing the equation for $v$ against $\Lambda^{\beta+2\alpha} v$, we obtain
$$
\|v(t)\|^2_{\dot{H}^{\beta/2+\alpha}}+\nu\int_0^t\|v(s)\|^2_{\dot{H}^{\beta+\alpha}}ds\\
\leq\|v_0\|^2_{\dot{H}^{\beta/2+\alpha}}+ C.
$$
We compute
\begin{eqnarray*}
\frac{d}{dt}\|u(t)\|_{\dot{H}^{\alpha}}^2&=&-\mu\int_\TT|\Lambda^{3\alpha/2}u|^2dx+
\int_{\TT}u\Lambda^\beta v\Lambda^{2\alpha}u dx+r\int_\TT|\Lambda^{\alpha}u|^2dx\\
&&+
\int_{\TT}\pax u\Lambda^{\beta-1}H v\Lambda^{2\alpha}u dx-r\int_{\TT}\Lambda^{\alpha}(u^2)\Lambda^{\alpha}udx\\
&\leq&-\mu\|u(t)\|_{\dot{H}^{3\alpha/2}}^2+\|\Lambda^{\alpha/2}(u\Lambda^\beta v)\|_{L^2}\|u(t)\|_{\dot{H}^{3\alpha/2}}\\
&&+r\|u(t)\|_{\dot{H}^{\alpha}}^2+\|\Lambda^{\alpha/2}(\pax u\Lambda^{\beta-1}H v)\|_{L^2}\|u(t)\|_{\dot{H}^{3\alpha/2}}\\
&&+c\|u(t)\|_{L^\infty}\|u(t)\|_{\dot{H}^{\alpha}}^2\\
&\leq&-\frac{\mu}{2}\|u(t)\|_{\dot{H}^{3\alpha/2}}^2+c(\mu)[\|u(t)\|^2_{\dot{H}^{\alpha/2}}\|\Lambda^\beta v\|_{L^\infty}^2\\
&&+\|u(t)\|_{L^\infty}^2\|\Lambda^{\beta+\alpha/2} v\|_{L^2}^2]+r\|u(t)\|_{\dot{H}^{\alpha}}^2\\
&&+c(\mu)[\|u(t)\|_{\dot{H}^{1+\alpha/2}}^2\|\Lambda^{\beta-1}H v\|_{L^\infty}^2+\|\pax u\|_{L^2}^2\|\Lambda^{\beta-1+\alpha/2} v\|_{L^\infty}^2]\\
&&+c\|u(t)\|_{L^\infty}\|u(t)\|_{\dot{H}^{\alpha}}^2.
\end{eqnarray*}
\textbf{Step 4. (construction of a solution)} If the initial data $(u_0,v_0)\in H^{k\alpha}(\TT)\times H^{k\alpha+\beta/2}(\TT)$, $k\in\NN$, $k\alpha\geq 3$ we have local existence of regular solutions from Theorem \ref{localexistence}. Additionally, Step 3 gives us bounds 
\begin{align*}
u&\in L^\infty(0,T;H^\alpha(\TT))\cap L^2(0,T;H^{3\alpha/2}(\TT)),\\
v&\in L^\infty(0,T;H^{\beta/2+\alpha}(\TT))\cap L^2(0,T;H^{\beta+\alpha}(\TT))
\end{align*}
that are independent from the local time of existence; let us call them global-in-time bounds. In fact, to obtain the global-in-time bounds rigorously, using regularity given by Theorem  \ref{localexistence}, we need at step 3 to reinterpret some intermediate steps in terms of duality pairing. A similar remark applies to obtaining the evolutionary norms in all steps 1-3, including the notion of the time derivative of a single (spatial) Fourier mode. This point has been already raised by the end of Section \ref{S2}.  Our global-in-time bounds give
$$
\int_0^T\|\pax u(s)\|_{L^\infty}ds\leq C(T).
$$
To conclude with the continuation criterion given by Theorem \ref{continuation}, we need also 
\[\int_0^T \|\Lambda^\beta v (s)\|_{L^\infty} ds \le C(T).\] In fact, using $\beta-1+\alpha/2\leq\beta/2+\alpha/2$ we obtain
\begin{align*}
\|\Lambda^{\beta-1+\alpha/2} v(t)\|_{L^\infty}^2, \; \|\Lambda^{\beta-1}H v (t) \|_{L^\infty}^2\in\;& L^\infty, \\
\|\Lambda^{\beta+\alpha/2} v(t)\|_{L^2}^2, \;\|\Lambda^\beta v (t)\|_{L^\infty}^2\in\;& L^1
\end{align*}
Next, let us consider the case where the initial data is not that smooth, but merely $(u_0,v_0)\in L^2\times H^{\beta-\alpha/2}$. After mollification, we have an initial data $(u^\epsilon_0,v^\epsilon_0)$ with the desired regularity. Applying the previous reasoning, we have a global smooth regularized solution $(u^\epsilon,v^\epsilon)$. Due to Step 1, these functions are uniformly bounded in 
$$
u^\epsilon \in L^\infty([0,T],L^2(\TT))\cap L^2([0,T],H^{\alpha/2}(\TT)),
$$
$$
v^\epsilon \in L^\infty([0,T],H^{\beta-\alpha/2}(\TT))\cap L^2([0,T], H^{3\beta/2-\alpha/2}(\TT)).
$$
Testing $\pat u^\epsilon,\pat v^\epsilon$ against $\phi\in H^2$ and using the duality pairing, we obtain a uniform bound
$$
\pat u^\epsilon, \; \pat v^\epsilon\in L^\infty([0,T],H^{-2}(\TT)).
$$
Applying Aubin-Lions's Theorem (with $H^{\alpha/2}\subset \subset L^2\subset H^{-2}$ for $u^\epsilon$ and $H^{3\beta/2-\alpha/2}\subset \subset L^2 \subset H^{-2}$ for $v^\epsilon$), we take a subsequence (denoted again by $\epsilon$) such that
$$
u^\epsilon(t)\rightarrow u(t) \text{ in }L^2_tL^2_x, \quad
u^\epsilon(t)\rightharpoonup u(t) \text{ in }L^2_tH^{\alpha/2}_x,
$$
$$
v^\epsilon(t)\rightarrow v(t) \text{ in }L^2_tL^2_x , \quad
v^\epsilon(t)\rightharpoonup v(t) \text{ in }L^2_tH^{3\beta/2-\alpha/2}_x.
$$
Using the properties of the mollifier, we have
$$
u^\epsilon(0)\rightarrow u_0 \text{ in }L^2,\;\;v^\epsilon(0)\rightarrow v_0 \text{ in }L^2.
$$
With the previous strong convergence, we can pass to the limit in the weak formulations of Definition \ref{defipp}.
\end{proof}
We deal now with the existence of a global solution in the critical and subcritical cases $\alpha\geq1$, where the logistic term is arbitrarily weak but positive ($r>0$).
\begin{proof}[Proof of Theorem \ref{globalpp2}]  
We begin the proof with a new first \emph{a priori} estimate, that provides global weak solutions for  $\alpha\geq1$. Next we follow the proof of previous theorem to show existence of strong solutions in the case  $\alpha >1$. \\
\textbf{Step 1. ({\emph{a priori} estimates I} and weak solutions) } 

Let us consider times $0\leq t\leq T$ where $T$ is an arbitrary fixed number.  

Let $\alpha\geq1$, $\beta>0$. Testing the equation for $v$ with $\Lambda^\beta v$, one obtains
\begin{align*}
\frac{1}{2}\frac{d}{dt}\|v(t)\|_{\dot{H}^{\beta/2}}^2+\lambda \|v(t)\|_{\dot{H}^{\beta/2}}^2&\leq -\frac{\nu}{2}\int_\TT|\Lambda^{\beta}v|^2dx+ \frac{1}{2\nu}\|u\|_{L^2}^2.
\end{align*}
This inequality, together with Lemma \ref{L1normr} implies
$$
\int_0^t\|v\|^2_{H^{\beta}}ds\leq C(T,\mathcal{N},r,\nu).
$$
Testing now the equation for $u$ with $u$, we have
\begin{align*}
\frac{1}{2}\frac{d}{dt}\|u(t)\|_{L^2}^2&=-\mu\int_\TT|\Lambda^{\alpha/2}u|^2dx+\frac{1}{2}
\int_{\TT}u^2\Lambda^\beta vdx+r\|u(t)\|_{L^2}^2-r\|u(t)\|_{L^3}^3\\
&\leq-\mu\|u\|_{\dot{H}^{\alpha/2}}^2+\frac{1}{2}\left(\|u-\langle u\rangle\|_{L^4}^2+ \sqrt{2 \pi} \langle u \rangle \|u\|_{L^2}\right)
\|v\|_{\dot{H}^{\beta}}+r\|u(t)\|_{L^2}^2\\
&\leq-\mu\|u\|_{\dot{H}^{\alpha/2}}^2+ C\left(\|u-\langle u\rangle\|_{L^2}\|u\|_{\dot{H}^{0.5}}+ \langle u \rangle \|u\|_{L^2}\right)
\|v\|_{\dot{H}^{\beta}}\\
&\quad +r\|u(t)\|_{L^2}^2\\
&\leq-\mu\|u\|_{\dot{H}^{\alpha/2}}^2+ C\left(\|u\|_{L^2}\|u\|_{\dot{H}^{0.5}}+\langle u \rangle \|u\|_{L^2}\right)
\|v\|_{\dot{H}^{\beta}}+r\|u(t)\|_{L^2}^2,
\end{align*}
where we have used the inequality
$$
\|u-\langle u\rangle\|_{L^4}^2\leq C\|u-\langle u\rangle\|_{L^2}\|u\|_{\dot{H}^{0.5}}\leq C\|u\|_{L^2}\|u\|_{\dot{H}^{0.5}}.
$$
Young's inequality implies
\begin{align*}
\frac{d}{dt}\|u(t)\|_{L^2}^2&\leq-\frac{\mu}{2}\|u\|_{\dot{H}^{\alpha/2}}^2+C(r,\mathcal{N})\|u\|^2_{L^2}
(\|v\|_{\dot{H}^{\beta}}^2+1),
\end{align*}
thus
\begin{align}\label{ubound}
u&\in L^\infty(0,T;L^2(\TT))\cap L^2(0,T;H^{\alpha/2}(\TT))\\
v&\in L^\infty(0,T;H^{\beta/2}(\TT))\cap L^2(0,T;H^{\beta}(\TT))\label{vbound}.
\end{align}

After mollification, we have a smooth initial data $(u^\epsilon_0,v^\epsilon_0)$. Let us consider the approximate problems
\begin{eqnarray*}
\pat u^\epsilon & = & -\mu\Lambda^\alpha u^\epsilon+\pax(u^\epsilon\Lambda^{\beta-1} H v^\epsilon) +ru^\epsilon(1-u^\epsilon)+\epsilon \pax^2u^\epsilon,\\ 
\pat v^\epsilon & = & -\nu\Lambda^\beta v^\epsilon-\lambda v^\epsilon+u^\epsilon+\epsilon \pax^2v^\epsilon.
\end{eqnarray*}
We have local existence of regular solutions from Theorem \ref{localexistence}. Furthermore, due to Theorem \ref{globalpp1}, we have a global-in-time, smooth regularized solution $(u^\epsilon,v^\epsilon)$. Due to \eqref{ubound} and \eqref{vbound}, these functions are uniformly bounded in 
$$
u^\epsilon \in L^\infty([0,T],L^2(\TT))\cap L^2([0,T],H^{\alpha/2}(\TT)),
$$
$$
v^\epsilon \in L^\infty([0,T],H^{\beta/2}(\TT))\cap L^2([0,T], H^{\beta}(\TT)).
$$
As in the proof of Theorem \ref{globalpp1},
$$
\pat u^\epsilon, \; \pat v^\epsilon\in L^\infty([0,T],H^{-2}(\TT)).
$$
Applying Aubin-Lions's Theorem (with $H^{\alpha/2}\subset \subset L^2\subset H^{-2}$ for $u^\epsilon$ and $H^{\beta}\subset \subset L^2\subset H^{-2}$ for $v^\epsilon$), we take a subsequence (denoted again by $\epsilon$) such that
$$
u^\epsilon(t)\rightarrow u(t) \text{ in }L^2_tL^2_x, \quad
u^\epsilon(t)\rightharpoonup u(t) \text{ in }L^2_tH^{\alpha/2}_x,
$$
$$
v^\epsilon(t)\rightarrow v(t) \text{ in }L^2_tL^2_x, \quad
v^\epsilon(t)\rightharpoonup v(t) \text{ in }L^2_tH^{\beta}_x.
$$
Using the properties of the mollifier, we can pass to the limit in the weak formulations of Definition \ref{defipp}. \\
\textbf{Step 2. (Further {\emph{a priori} estimates and strong solutions}) } 
The weak regularity of the previous step implies for $\beta \le \alpha$ that $u,v$ enjoy weak regularity of Step 1 of Theorem \ref{globalpp1}. Therefore we can rewrite Steps 2. - 4. of Theorem \ref{globalpp1} and obtain $$
u\in C([0,T],H^{k\alpha}(\TT)) \quad \forall \;T<\infty,
$$
$$
v\in C([0,T],H^{k\alpha+\beta/2}(\TT)) \quad \forall \;T<\infty.
$$
Let us observe that we need $\alpha > 1$ for the interpolations and embeddings at the beginning of Step 2 of Theorem \ref{globalpp1}.
\end{proof}
\section{Proof of Theorem \ref{globalpp1abs}: absorbing set.}\label{pf:abs}
The proof uses the estimates from the proof of Theorem  \ref{globalpp1}. Let us  write
\begin{equation}\label{C_FS}
C_{FS}(\beta,\alpha,\lambda,\nu)=\sum_{k\in\ZZ}\left(\frac{|k|^{\beta-\alpha/2}}{\nu|k|^\beta+\lambda}\right)^2
\end{equation}
According to \eqref{fourierv2}, for every $t\geq0$, we have
$$
\|v(t)\|_{\dot{H}^{\beta-\alpha/2}}^2\leq \|v_0\|_{\dot{H}^{\beta-\alpha/2}}^2e^{-\lambda t}+\mathcal{N}C_{FS}(\beta,\alpha,\lambda,\nu),
$$
so
\begin{eqnarray*}
\int_t^{t+1}\|v(s)\|_{\dot{H}^{\beta-\alpha/2}}^2ds&\leq& \frac{\|v_0\|_{\dot{H}^{\beta-\alpha/2}}^2}{\lambda}(1-e^{-\lambda})e^{-\lambda t}+\mathcal{N}C_{FS}(\beta,\alpha,\lambda,\nu)\\
&\leq& \frac{\|v_0\|_{\dot{H}^{\beta-\alpha/2}}^2}{\lambda}+\mathcal{N}C_{FS}(\beta,\alpha,\lambda,\nu),
\end{eqnarray*}
$$
\int_t^{t+1}\|v(s)\|_{\dot{H}^{\beta-\alpha/2}}^{\frac{2+2\delta}{\delta}}ds\leq \left(\|v_0\|_{\dot{H}^{\beta-\alpha/2}}^2+\mathcal{N}C_{FS}(\beta,\alpha,\lambda,\nu)\right)^{\frac{2+2\delta}{2\delta}}.
$$
Notice that if
$$
t\geq t_0=\max\left\{0,\frac{1}{-\lambda}\log\left(\frac{\mathcal{N}C_{FS}(\beta,\alpha,\lambda,\nu)}{\frac{\|v_0\|_{\dot{H}^{\beta-\alpha/2}}^2}{\lambda}(1-e^{-\lambda})}\right)\right\},
$$
we have an inequality that is independent of $v_0$:
$$
\|v(t)\|_{\dot{H}^{\beta-\alpha/2}}^2\leq 2\mathcal{N}C_{FS}(\beta,\alpha,\lambda,\nu).
$$
Then, from \eqref{uL2}, we obtain
\begin{align*}
\frac{d}{dt}\|u(t)\|_{L^2}^2+\frac{\mu}{2}\|u(t)\|_{\dot{H}^{\alpha/2}}^{2}& \leq r\|u(t)\|_{L^2}^2+\frac{(C_{KP}(\alpha)\mathcal{N})^2}{\mu}\|v(t)\|_{\dot{H}^{\beta-\alpha/2}}^{2}\\
&\quad +\frac{(C_{KP}(\alpha)C_{GN}(\alpha))^{\frac{2+2\delta}{\delta}}4\mathcal{N}^{2}\|v(t)\|_{\dot{H}^{\beta-\alpha/2}}^{\frac{2+2\delta}{\delta}}}{\mu}.
\end{align*}
Due to Lemma \ref{L1normr}, we obtain
$$
\int_t^{t+1}\|u(s)\|_{L^2}^2\leq 3\mathcal{N}.
$$
Using Uniform Gronwall estimate (Lemma \ref{UGL}) and the previous inequality, we have that
$$
\|u(t+1)\|_{L^2}^2\leq S(L^2)\quad \forall\,t\geq t_0,
$$
where $S(L^2)$ is defined in \eqref{SL2}. Using the previous inequality we also obtain
$$
\int_t^{t+1}\|u(s)\|^2_{\dot{H}^{\alpha/2}}ds\leq \frac{2(S(L^2)+S(L^2)e^{-r})}{\mu},\quad \forall\,t\geq t_0+1.
$$
Let us consider $\alpha<2$ (the case $\alpha=2$ can be done straightforwardly). We look for a commutator-type structure in the nonlinearity:
\begin{eqnarray*}
\frac{1}{2}\frac{d}{dt}\|u(t)\|_{\dot{H}^{\alpha/2}}^2&=&-\mu\int_\TT|\Lambda^{\alpha}u|^2dx+
\int_{\TT}u\Lambda^\beta v\Lambda^{\alpha}u dx\\
&&+r\int_\TT|\Lambda^{\alpha/2}u|^2dx-r\int_{\TT}u^2\Lambda^{\alpha}udx\\
&&+\int_{\TT}\left[\Lambda^{\alpha/2},\Lambda^{\beta-1}H v\right]\pax u\Lambda^{\alpha/2}u dx\\
&&+\int_{\TT}\Lambda^{\beta-1}H v\frac{\pax(\Lambda^{\alpha/2}u)^2}{2}dx.
\end{eqnarray*}
We estimate
$$
I_1=\int_{\TT}u\Lambda^\beta v\Lambda^{\alpha}u dx\leq \frac{2}{\mu}\|u\|_{L^\infty}^2\|v\|_{\dot{H}^\beta}^2+\frac{\mu}{8}\|u\|_{\dot{H}^\alpha}^2,
$$
$$
I_2=r\int_{\TT}u^2\Lambda^{\alpha}udx\leq \frac{2r^2}{\mu}\|u\|_{L^\infty}^2\|u\|_{L^2}^2+\frac{\mu}{8}\|u\|_{\dot{H}^\alpha}^2,
$$
$$
I_3=\int_{\TT}\Lambda^{\beta-1}H v\frac{\pax(\Lambda^{\alpha/2}u)^2}{2}dx\leq C_I(\alpha)\|v\|_{\dot{H}^\beta}\|u\|_{\dot{H}^{\alpha/2}}\|u\|_{\dot{H}^{\alpha}},
$$
where we used that, for a zero-mean value $f=\Lambda^{\alpha/2}u$, the following inequality holds
\begin{equation}\label{CI}
\|f\|_{L^4}^2\leq C_I(\alpha)\|f\|_{L^2}\|f\|_{\dot{H}^{\alpha/2}}.
\end{equation}
The yet untouched term is
\[
I_4=\int_{\TT}\left[\Lambda^{\alpha/2},\Lambda^{\beta-1}H v\right]\pax u\Lambda^{\alpha/2}u dx
\leq \left\|\left[\Lambda^{\alpha/2},\Lambda^{\beta-1}H v\right]\pax u\right\|_{L^2}\|u\|_{\dot{H}^{\alpha/2}}.
\]
The Kenig-Ponce-Vega estimate (see Lemma \ref{lemaaux2}) gives
\begin{align}
\|[\Lambda^{\alpha/2},\Lambda^{\beta-1}H v]\pax u\|_{L^2} &\leq C_{KPV}(\alpha)\left(\|\pax u\|_{L^{2+\frac{2\alpha-2}{2-\alpha}}}\|\Lambda^{\beta-1}Hv\|_{W^{\frac{\alpha}{2}, \frac{2}{\alpha-1}}}\right.\nonumber\\
&\quad \left.+\| \pax u \|_{W^{\frac{\alpha}{2}-1,\infty}}\|v\|_{\dot{H}^{\beta}}\right)\label{C_KPV}.
\end{align}
Since both $\pax u$ and $Hv$ have zero-mean, inequalities \eqref{C_SE2} yield
$$
I_4\leq C_{KPV}(\alpha)\|u\|_{\dot{H}^{\alpha/2}}\|u\|_{\dot{H}^{\alpha}}\|v\|_{\dot{H}^{\beta}}\left(C^3_{SE}(\alpha)C^1_{SE}(\alpha)+C^4_{SE}(\alpha)\right).
$$
Young's inequality implies
\begin{eqnarray*}
I_4&\leq& \|v\|_{\dot{H}^{\beta}}^2\|u\|_{\dot{H}^{\alpha/2}}^2\frac{\left[C_{KPV}(\alpha)(C^3_{SE}(\alpha)C^1_{SE}(\alpha)+C^4_{SE}(\alpha))\right]^2}{\mu}+\frac{\mu}{4}\|u\|_{\dot{H}^{\alpha}}^2.
\end{eqnarray*}
Collecting every estimate, we have eventually that
\begin{equation}\label{eq:rev1}
\frac{d}{dt}\|u\|_{\dot{H}^{\alpha/2}}^2+\mu\|u\|_{\dot{H}^\alpha}^2\leq2\|u\|_{{H}^{\alpha/2}}^2g(t),
\end{equation}
with
\begin{multline*}
g(t)=r+\frac{2C^2_{SE}(\alpha)r^2}{\mu}\|u\|_{L^2}^2+\frac{\|v\|_{\dot{H}^\beta}}{2}\\
+\frac{2C^2_{SE}(\alpha)+(C_I(\alpha))^2}{\mu}\|v\|_{\dot{H}^\beta}^2\\
+\frac{\left[C_{KPV}(\alpha)(C^3_{SE}(\alpha)C^1_{SE}(\alpha)+C^4_{SE}(\alpha))\right]^2}{\mu}\|v\|_{\dot{H}^\beta}^2.
\end{multline*}
We get control over the full ${H}^{\alpha/2}$ norm by testing equation for $u$ with $u$. A straightforward computation there together with \eqref{eq:rev1} gives
\[
\frac{d}{dt}\|u\|_{{H}^{\alpha/2}}^2+\mu\|u\|_{\dot{H}^\alpha}^2\leq2\|u\|_{{H}^{\alpha/2}}^2\left(g(t) + \frac{1}{2} +r\right).
\]
Testing the equation for $v$ with $\Lambda^{\beta} v$ and using $\beta\geq\alpha$, we have
$$
\int_t^{t+1}\|v(s)\|_{\dot{H}^{\beta}}^2ds\leq \frac{\mathcal{N}}{\nu}\left(\frac{3}{\nu}+2C_{FS}(\beta,\alpha,\lambda,\nu)\right)\quad \forall\, t\geq t_0,
$$
so, if $t\geq t_0$,
\begin{multline*}
\int_t^{t+1}g(s)ds\leq r+\frac{r^2C^2_{SE}(\alpha)}{\mu}6\mathcal{N}+\frac{\mathcal{N}}{\nu}\left(\frac{3}{\nu}+2C_{FS}(\beta,\alpha,\lambda,\nu)\right)\\
\times\left(\frac{1}{2}+\frac{2C^2_{SE}(\alpha)+(C_I(\alpha))^2}{\mu} \frac{\left[C_{KPV}(\alpha)(C^3_{SE}(\alpha)C^1_{SE}(\alpha)+C^4_{SE}(\alpha))\right]^2}{\mu}\right).
\end{multline*}
Using Lemma \ref{UGL}, we obtain
$$
\|u(t+1)\|_{\dot{H}^{\alpha/2}}^2\leq S(\dot{H}^{\alpha/2})\quad \forall\,t\geq t_0+1,
$$
$$
\int_{t}^{t+1}\|u(s)\|_{\dot{H}^\alpha}^2ds\leq \frac{2}{\mu}S(\dot{H}^{\alpha/2})\left(\frac{1}{2}+\int_t^{t+1}g(s)ds\right)\quad \forall\,t\geq t_0+2,
$$
with
$$
S(\dot{H}^{\alpha/2})=\frac{2(S(L^2)+S(L^2)e^{-r})}{\mu}e^{2\int_t^{t+1}g(s)ds}.
$$

Hence we have obtained the absorbing set in $H^{k\alpha}$ with $k=1$.

Due to the linear character of the equation for $v$, we have that
$$
\|v(t)\|_{\dot{H}^{\beta}}^2\leq \|v_0\|_{\dot{H}^{\beta}}^2e^{-\lambda t}+\mathcal{M}(\dot{H}^{\alpha/2})C_{FS}(\beta,\alpha,\lambda,\nu),
$$
where, for a given space $X$, we set
$$
\mathcal{M}(X)=\sqrt{2\pi}\max\left\{\max_{0\leq s\leq T^*}\|u(s)\|_{X},S(X)\right\},
$$
for a $T^*>>1$ that will be fixed later. We remark that $\|\Lambda^{\alpha/2}u(t)\|_{L^1}\leq \mathcal{M}(\dot{H}^{\alpha/2})$.

Now we can continue in the same way using induction. Once we have the absorbing set for $u$ in $L^\infty([0,\infty],H^{k\alpha/2}(\TT))$ ($k\geq 1$) and the bound $u\in L^2([t,t+1],H^{(k+1)\alpha/2}(\TT))$, we can ensure that $v\in L^\infty([0,\infty],H^{\beta+(k-1)\alpha/2}(\TT))$ and $v\in  L^2([t,t+1],H^{\beta+k\alpha/2}(\TT))$. Now we test the equation for $u$ against $\Lambda^{(k+1)\alpha}u$ to get
\begin{eqnarray*}
\frac{1}{2}\frac{d}{dt}\|u(t)\|_{\dot{H}^{(k+1)\alpha/2}}^2&=&-\mu\int_\TT|\Lambda^{\left(\frac{k}{2}+1\right)\alpha}u|^2dx+r\int_\TT|\Lambda^{(k+1)\alpha/2}u|^2dx\\
&&+\int_{\TT}\Lambda^{k\alpha/2}(u\Lambda^\beta v)\Lambda^{(k/2+1)\alpha}u dx\\
&&-r\int_{\TT}\Lambda^{k\alpha/2}(u^2)\Lambda^{(k/2+1)\alpha}udx\\
&&+\int_{\TT}\left[\Lambda^{(k+1)\alpha/2},\Lambda^{\beta-1}H v\right]\pax u\Lambda^{(k+1)\alpha/2}u dx\\
&&+\int_{\TT}\Lambda^{\beta-1}H v\frac{\pax(\Lambda^{(k+1)\alpha/2}u)^2}{2}dx.
\end{eqnarray*}
To conclude the existence of $S\left(\frac{(k+1)\alpha}{2}\right)$ we use Lemma \ref{lemaaux2} and the same ideas. Finally, notice that at each iteration step we have to add $1$ to the initial value $t_0$. Consequently, we need to take $T^*=T^*(k)$ large enough to reach $H^{k\alpha}(\TT)$. For instance, to reach $H^3$, $T^{*}=t_0+10$ suffices.

\section{Proofs concerning the smoothing effect}\label{S5}
We begin with proving our main result concerning the smoothing effect of the system \eqref{eqa1}-\eqref{eqa2}

\begin{proof}[Proof of Theorem \ref{smoothingeffect}]
Recall that for $t>0$ the finiteness of the Hardy-Sobolev norm \eqref{9} implies the analyticity on the real line. We define $z=x\pm i\omega t$. In this complex strip the extended system reads
\begin{eqnarray}\label{eqa1complex}
\pat u(z) & = & -\mu\Lambda^\alpha u(z)+\pax\cdot(u(z)\Lambda^{\beta-1} H v(z))\\\nonumber
&& +ru(z)(1-u(z)),\\ 
\label{eqa2complex}
\pat v(z) & = & -\nu\Lambda^\beta v(z)-\lambda v(z)+u(z).
\end{eqnarray}
We are going to perform new energy estimates in the Hardy-Sobolev space \eqref{8} for an appropriate value of $\omega$. Notice that, as the functions $u$ and $v$ are complex for complex arguments, the integration by parts is a delicate matter for some terms. Consequently, there are several new terms appearing that are not present in the real case. 

We deal first with the case $\alpha,\beta>1$. At the end of the proof we will explain how to cover the extreme case $\alpha=\beta=1$. We restrict here to formal estimates, as their rigorization is analogous to that for the real case.

Let us start with the estimates for the equation \eqref{eqa2complex}. Using $\int f\bar{g}=\overline{\int \bar{f}g}$, we have
$$
\frac{d}{dt}\|v\|_{L^2(\Ss_{\omega})}^2=2\re\int_\TT\bar{v}(z)\left(\pat v(z)\pm i\omega\pax v(z)\right)dx.
$$
Using Plancherel's Theorem, we have
$$
\re\int_\TT\bar{v}(z)(-\nu\Lambda^{\beta}v(z)-\lambda v(z))dx=-\nu\|v\|^2_{\dot{H}^{\beta/2}(\Ss_{\omega})}-\lambda\|v\|^2_{L^2(\Ss_{\omega})}\leq 0.
$$
Consequently, using \eqref{eqa2complex},
$$
\frac{1}{2}\frac{d}{dt}\|v\|_{L^2(\Ss_{\omega})}^2\leq \|v\|_{L^2(\Ss_{\omega})}\left(\omega\|v\|_{H^1(\Ss_{\omega})}+\|u\|_{L^2(\Ss_{\omega})}\right).
$$
Taking 4 derivatives of the equation \eqref{eqa2complex} and testing against $\pax^4v$, we obtain
$$
\frac{1}{2}\frac{d}{dt}\|v\|_{\dot{H}^4(\Ss_{\omega})}^2=\re\int_\TT\pax^4\bar{v}(z)\left(\pat \pax^4v(z)\pm i\omega\pax^5 v(z)\right)dx,
$$
\begin{align*}
\frac{1}{2}\frac{d}{dt}\|v\|_{H^4(\Ss_{\omega})}^2&\leq -\nu\|v\|_{\dot{H}^{4+\beta/2}(\Ss_{\omega})}^2+3\omega\|v\|_{\dot{H}^{4+1/2}(\Ss_{\omega})}^2+\frac{\mu}{4}\|u\|_{\dot{H}^{3+\alpha/2}(\Ss_{\omega})}^{2}\\
&\quad+\frac{(C_{SI}(\alpha))^2}{\omega}\frac{\alpha-1}{\alpha}\left(\frac{\frac{\mu}{4}\omega \alpha}{(C_{SI}(\alpha))^2}\right)^{-1/(\alpha-1)} \|u\|_{\dot{H}^{3}(\Ss_{\omega})}^{2}\\
&\quad+\left(\omega+\frac{1}{2}\right)\|v\|_{H^4(\Ss_{\omega})}^2+\frac{\|u\|_{H^3(\Ss_{\omega})}^2}{2}.
\end{align*}

Now we proceed with the equation for $u$. The lower order term can be bounded easily as follows
\begin{align*}
\frac{1}{2}\frac{d}{dt}\|u\|_{L^2(\Ss_{\omega})}^2&=\re\int_\TT\bar{u}(z)\left(\pat u(z)\pm i\omega\pax u(z)\right)dx\\
&\leq \|u\|_{L^2(\Ss_\omega)}\left(\omega\|u\|_{H^1(\Ss_\omega)}+\|u\|_{L^\infty(\Ss_\omega)}\|v\|_{\dot{H}^\beta(\Ss_\omega)}\right.\\
&\quad +r\|u\|_{L^2(\Ss_\omega)}+r\|u\|_{L^2(\Ss_\omega)}\|u\|_{L^\infty(\Ss_\omega)}\\
&\quad \left.+\|u\|_{H^1(\Ss_\omega)}\|\Lambda^{\beta-1}H v\|_{L^\infty(\Ss_\omega)}\right).
\end{align*}
The higher order seminorm contributes with
$$
\frac{1}{2}\frac{d}{dt}\|u\|_{\dot{H}^3(\Ss_{\omega})}^2=\re\int_\TT\pax^3\bar{u}(z)\left(\pat \pax^3 u(z)\pm i\omega\pax^4 u(z)\right)dx.
$$
For the sake of brevity, let  us focus now on the most singular terms. They are
$$
L_1=\frac{1}{2}\re\int_\TT \Lambda^{\beta-1}H v(z)\pax |\pax ^3 u(z)|^2dx\leq \frac{1}{2}\|u\|_{\dot{H}^3(\Ss_\omega)}^2\|\Lambda^{\beta}v\|_{L^\infty(\Ss_\omega)},
$$
$$
L_2=\int_\TT \im\Lambda^{\beta-1}H v(z)\pax ^3 \re\, u(z)\pax ^4 \im\, u(z)dx,
$$
$$
L_3=-\int_\TT \im\Lambda^{\beta-1}H v(z)\pax ^3 \im\, u(z)\pax ^4 \re\, u(z)dx\leq L_2+\|u\|_{\dot{H}^3(\Ss_\omega)}^2\|\Lambda^{\beta}v\|_{L^\infty(\Ss_\omega)}.
$$
Using $\Lambda H=-\pax $ and the self-adjointness of $\Lambda^s$, we find a commutator and estimate
\begin{multline*}
L_2\leq \left\|\left[\Lambda^{0.5},\im\Lambda^{\beta-1}H v\right]\pax ^3 \re\, u\right\|_{L^2(\Ss_\omega)}\|u\|_{\dot{H}^{3.5}(\Ss_\omega)}\\
+\|\im\Lambda^{\beta-1}H v\|_{L^\infty(\Ss_\omega)}\|u\|_{\dot{H}^{3.5}(\Ss_\omega)}^2.
\end{multline*}
We use Lemma \ref{lemaaux2} to obtain the commutator estimate
$$
\left\|\left[\Lambda^{0.5},\im\Lambda^{\beta-1}H v\right]\pax ^3 \re\, u\right\|_{L^2(\Ss_\omega)}\leq C_{KPV}^2\|\Lambda^\beta v\|_{L^\infty(\Ss_\omega)}\|u\|_{\dot{H}^{3}(\Ss_\omega)}.
$$
Putting all the estimates together and using Sobolev embedding together with $3+\beta\leq 4+\beta/2$, we have that
\begin{eqnarray*}
\frac{d}{dt}\|u\|_{H^3(\Ss_{\omega})}^2&\leq& 2\|u\|_{H^3(\Ss_\omega)}^2\left(\omega+17.5C^2_{SE}(\alpha)\|v\|_{\dot{H}^4(\Ss_\omega)}+2r\right.\\
&&\left.+2C^2_{SE}(\alpha)\|v\|_{\dot{H}^{4+\beta/2}(\Ss_\omega)}\right)+\|u\|_{H^3(\Ss_\omega)}^39rC^2_{SE}(\alpha)\\
&&-2\mu\|u\|_{\dot{H}^{3+\alpha/2}(\Ss_{\omega})}^2+4\omega\|u\|_{\dot{H}^{3.5}(\Ss_{\omega})}^2\\
&&+2\|\im\,\Lambda^{\beta-1}H v\|_{L^\infty(\Ss_\omega)}\|u\|_{\dot{H}^{3.5}(\Ss_\omega)}^2\\
&&+2\|u\|_{\dot{H}^{3.5}(\Ss_\omega)}\|u\|_{\dot{H}^{3}(\Ss_\omega)}C_{KPV}^2C_{SE}^2(\alpha)\|v\|_{H^4(\Ss_\omega)}.
\end{eqnarray*}
Notice that via Poincar\'e inequality holds
$$
\|\im\,\Lambda^{\beta-1}H v\|_{L^\infty}\leq \sqrt{2}\|\im\,v\|_{\dot{H}^\beta}\leq \sqrt{2}\|\im\,v\|_{\dot{H}^2}.
$$
Let us define the energy
\begin{multline*}
E(t)=1+\|u\|_{H^{3}(\Ss_\omega)}^2+\|v\|_{H^{4}(\Ss_\omega)}^2+\frac{1}{\frac{\mu}{4}-\sqrt{2}\|\im\,v\|_{\dot{H}^2}}\\
+\left\|\frac{1}{2\|u_0\|_{L^\infty(\TT)}-|u(z)|^2}\right\|_{L^\infty}+\left\|\frac{1}{2\|v_0\|_{L^\infty(\TT)}-|v(z)|^2}\right\|_{L^\infty}.
\end{multline*}
We obtain (see \cite{CGO,GH,GNO} for further details)
$$
\frac{d}{dt}\left\|\frac{1}{2\|u_0\|_{L^\infty(\TT)}-|u(z)|^2}\right\|_{L^\infty}\leq (\mu+2+2r)C_{SE}^2(\alpha)(E(t))^4.
$$
In the same way
$$
\frac{d}{dt}\left\|\frac{1}{2\|v_0\|_{L^\infty(\TT)}-|v(z)|^2}\right\|_{L^\infty}\leq (\nu+\lambda+1)C_{SE}^2(\alpha)(E(t))^3.
$$

Thus, putting all the estimates together, we get
\begin{eqnarray*}
\frac{d}{dt}E(t)&\leq& -2\nu\|v\|_{\dot{H}^{4+\beta/2}(\Ss_{\omega})}^2+6\omega\|v\|_{\dot{H}^{4+1/2}(\Ss_{\omega})}^2+2 \left(\frac{\mu}{4}-\mu \right)\|u\|_{\dot{H}^{3+\alpha/2}(\Ss_{\omega})}^{2}\\
&&+2\bigg{[}\frac{(C_{SI}(\alpha))^2}{\omega}\frac{\alpha-1}{\alpha}\left(\frac{\omega^2 \alpha}{(C_{SI}(\alpha))^2}\right)^{-1/(\alpha-1)} +2\omega\\
&&+1.25 +2r+(17.5C_{SE}^2(\alpha))^2+\frac{(9rC_{SE}^2(\alpha))^2}{2}\bigg{]}E(t)\\
&&+(E(t))^2\left(1.5+\frac{2(C_{KPV}^2C_{SE}^2(\alpha))^2}{\mu}\right)\\
&&+(1+(\mu+2+2r+\nu+\lambda+1)C_{SE}^2(\alpha))(E(t))^4\\
&&+\left(\frac{\mu}{2}+(\sqrt{2})^3\|\im\,v\|_{\dot{H}^2}+4\omega\right)\|u\|_{\dot{H}^{3.5}(\Ss_\omega)}^2.
\end{eqnarray*}
Now observe that, as long as $E(t)<\infty,$ we have $\frac{\mu}{4}-\sqrt{2}\|\im\,v\|_{\dot{H}^2}>0$ and, using Poincar\'e inequality if needed, we obtain
\begin{eqnarray*}
\frac{d}{dt}E(t)&\leq& -2\nu\|v\|_{\dot{H}^{4+\beta/2}(\Ss_{\omega})}^2+6\omega\|v\|_{\dot{H}^{4+1/2}(\Ss_{\omega})}^2-\frac{\mu}{2}\|u\|_{\dot{H}^{3+\alpha/2}(\Ss_{\omega})}^{2}\\
&&+2\bigg{[}\frac{(C_{SI}(\alpha))^2}{\omega}\frac{\alpha-1}{\alpha}\left(\frac{\omega^2 \alpha}{(C_{SI}(\alpha))^2}\right)^{-1/(\alpha-1)} +2\omega\\
&&+1.25 +2r+(17.5C_{SE}^2(\alpha))^2+\frac{(9rC_{SE}^2(\alpha))^2}{2}\bigg{]}E(t)\\
&&+(E(t))^2\left(1.5+\frac{2(C_{KPV}^2C_{SE}^2(\alpha))^2}{\mu}\right)\\
&&+(1+(\mu+2+2r+\nu+\lambda+1)C_{SE}^2(\alpha))(E(t))^4\\
&&+4\omega\|u\|_{\dot{H}^{3.5}(\Ss_\omega)}^2.
\end{eqnarray*}
For $\alpha,\beta>1$ we have by Plancharel Theorem
$$
4\omega\|u\|_{\dot{H}^{3.5}(\Ss_\omega)}^2-\frac{\mu}{2}\|u\|_{\dot{H}^{3+\alpha/2}(\Ss_\omega)}^2\leq \mathcal{C}_1\|u\|_{\dot{H}^{3}(\Ss_\omega)}^2,
$$
$$
3\omega\|v\|_{\dot{H}^{4.5}(\Ss_\omega)}^2-\nu\|v\|_{\dot{H}^{4+\beta/2}(\Ss_\omega)}^2\leq \mathcal{C}_2\|v\|_{\dot{H}^{4}(\Ss_\omega)}^2,
$$
with $\mathcal{C}_1,\mathcal{C}_2$ given by \eqref{C1},\eqref{C2}. Consequently, we can choose any positive value for $\omega>0$ and we have the inequality
$$
\frac{d}{dt}E(t)\leq \mathcal{K}_1(E(t))^4,
$$
with $\mathcal{K}_1,\mathcal{C}_i$ defined in \eqref{C1},\eqref{C2} and \eqref{K1}. Solving this ODI, we obtain
$$
E(t)\leq \frac{1}{\sqrt[3]{\frac{1}{1+\|u_0\|_{H^{3}(\TT)}^2+\|v_0\|_{H^{4}(\TT)}^2}-3t
\mathcal{K}_1}},
$$
and, using \eqref{K1}, we conclude that $(u,v)$ are analytic functions at least for time
$$
\tilde{T}=\frac{1}{3\mathcal{K}_1(1+\|u_0\|_{H^{3}(\TT)}^2+\|v_0\|_{H^{4}(\TT)}^2)}
$$
Notice that in the extreme cases $\min\{\alpha,\beta\}=1$, we can take
$$
0<\omega\leq \omega_0,
$$
(with $\omega_0$ defined in \eqref{omega0}) to obtain the inequality
\begin{equation}\label{10}
\frac{d}{dt}E(t)\leq \mathcal{K}_2(E(t))^4,
\end{equation}
with $\mathcal{K}_2$ given by \eqref{K2}. From the inequality \eqref{10} we obtain
$$
E(t)\leq \frac{1}{\sqrt[3]{\frac{1}{1+\|u_0\|_{H^{3}(\TT)}^2+\|v_0\|_{H^{4}(\TT)}^2}-3t\mathcal{K}_2}},
$$
and we again conclude that the solution $(u,v)$ is analytic for time $t<\tilde{T}$ with
$$
\tilde{T}=\frac{1}{3\mathcal{K}_2(1+\|u_0\|_{H^{3}(\TT)}^2+\|v_0\|_{H^{4}(\TT)}^2)}.
$$
\end{proof}

\begin{remark}
A similar Theorem holds for the parabolic-elliptic system (\eqref{eqa1}-\eqref{eqa2} with $\tau=0$). We refer the reader to \cite{AGM} for details on how to adapt the proof.
\end{remark}

\begin{proof}[Proof of Corollary \ref{coro1}]
The proof of Corollary \ref{coro1} is obtained by a standard continuation argument. First notice that the solution $(u(t),v(t))\in H^3(\TT)\times H^4(\TT)$ globally and it is unique. In particular, at $t=\tilde{T}$, we can restart the evolution with initial data $(u_0^1,v_0^1)=(u(\tilde{T}),v(\tilde{T}))$. The initial data may not be analytic, but there exists a $\delta$ small enough so $(u^1(t),v^1(t))=(u(\tilde{T}+t),v(\tilde{T}+t))$ is analytic for $0<t<\delta$. As we can find such a positive $\delta$ for every initial data, we conclude. In other words, if we can not find such a positive $\delta$, it is because $(u_0^n,v_0^n) \notin H^3(\TT)\times H^4(\TT)$, and this is a contradiction. \end{proof}
For the proof of Corollary \ref{illposed} we refer to \cite{AGM}, \cite{CGO}, \cite{GH}.

\section{Proof of Theorem \ref{attractor}: Existence of the attractor}\label{S6}
Here we prove the existence and some properties of the attractor. First we need a definition coming from dynamical systems (see Temam \cite{temambook}).

\begin{defi}
The solution operator $S(t)(u_0,v_0)=(u(t,x),v(t,x))$ defines a compact semiflow in $H^{3\alpha}(\TT)\times H^{\beta/2+3\alpha}(\TT)$ if, for every $(u_0,v_0)\in H^{3\alpha}(\TT)\times H^{\beta/2+3\alpha}(\TT)$, the following statements hold:
\begin{itemize}
\item $S(0)(u_0,v_0)=(u_0,v_0)$.
\item for all $t,s,u_0,v_0$, the semigroup property hold, \emph{i.e.}
$$
S(t+s)(u_0,v_0)=S(t)S(s)(u_0,v_0)=S(s)S(t)(u_0,v_0).
$$
\item For every $t>0$
$$
S(t)(\cdot,\cdot): \; H^{3\alpha}(\TT)\times H^{\beta/2+3\alpha}(\TT)\mapsto H^{3\alpha}(\TT)\times H^{\beta/2+3\alpha}(\TT)
$$ 
is continuous.
\item There exists $T^*>0$ such that $S(T^*)$ is a compact operator, \emph{i.e.} for every bounded set $B\subset H^{3\alpha}(\TT)\times H^{\beta/2+3\alpha}(\TT)$, $S(T^*)B\subset H^{3\alpha}(\TT)\times H^{\beta/2+3\alpha}(\TT)$ is a compact set.
\end{itemize}
\end{defi}

We have
\begin{lem}
Let $T>0$, $8/7\leq\alpha\leq\beta\leq2$, $\mu,\nu,\lambda,r>0$, $(u_0,v_0)\in H^{3\alpha}\times H^{\beta/2+3\alpha}$. Then $S(\cdot)(u_0,v_0)=(u(\cdot)),v(\cdot)))\in C([0,T],H^{3\alpha}(\TT)\times H^{\beta/2+3\alpha}(\TT))$ for every initial data and it defines a compact semiflow in $H^{3\alpha}\times H^{\beta/2+3\alpha}$. 
\end{lem}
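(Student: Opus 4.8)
The plan is to verify, in order, the four defining properties of a compact semiflow, with phase space $X=H^{3\alpha}(\TT)\times H^{\beta/2+3\alpha}(\TT)$. Since $r>0$ and $8/7\leq\alpha\leq\beta\leq2$ place us in the regime $\alpha>1$ with $\beta\geq\alpha\geq\alpha/2$, Theorem \ref{globalpp1} (with $k=3$, legitimate because $3\alpha\geq 24/7\geq3$) provides a unique global solution $(u,v)\in C([0,T],X)$ for every $T<\infty$ and every datum in $X$; this is exactly the time-continuity asserted in the lemma. The identity $S(0)(u_0,v_0)=(u_0,v_0)$ is immediate from the notion of solution, and the semigroup property follows from uniqueness, since the solution issued from $(u_0,v_0)$ and read at time $t+s$ solves the same Cauchy problem as the solution issued from $S(s)(u_0,v_0)$ and read at time $t$. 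I would also record here that $X\hookrightarrow H^3(\TT)\times H^4(\TT)$, which is precisely where the constraint $\alpha\geq 8/7$ enters: one has $\beta/2+3\alpha\geq\alpha/2+3\alpha=7\alpha/2\geq4$ iff $\alpha\geq 8/7$, so Theorem \ref{smoothingeffect} and Corollary \ref{coro1} apply to our data.

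The engine for the two remaining properties is the instantaneous gain of regularity. The key claim I would establish is that for every fixed $t>0$ and every bounded $B\subset X$ there is a $\sigma\in(0,\alpha)$ with
\[
\sup_{(u_0,v_0)\in B}\left(\|u(t)\|_{H^{3\alpha+\sigma}}+\|v(t)\|_{H^{\beta/2+3\alpha+\sigma}}\right)<\infty .
\]
I would prove this from the Duhamel representation
\[
u(t)=e^{-\mu t\Lambda^\alpha}u_0+\int_0^t e^{-\mu(t-s)\Lambda^\alpha}\left(\pax(u\Lambda^{\beta-1}Hv)+ru(1-u)\right)(s)\,ds,
\]
using the smoothing bound $\|\Lambda^{\sigma}e^{-\mu\tau\Lambda^\alpha}\|_{L^2\to L^2}\leq C(\mu\tau)^{-\sigma/\alpha}$ together with the uniform $C([0,T],X)$ bound coming from the global existence estimates. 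Since the semigroup gains $\alpha$ derivatives while the chemotactic term costs only one, choosing $\sigma<\alpha$ keeps the time singularity $(t-s)^{-\sigma/\alpha}$ integrable and closes a short bootstrap; the strong version of this gain is the analyticity of Theorem \ref{smoothingeffect}, but only the fractional gain above is needed. Crucially, this gain holds at \emph{every} positive $t$, so no compatibility between $t$ and the analyticity horizon $\tilde T$ is required, and a single $T^*$ works for arbitrarily large $B$.

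With the uniform higher-order bound in hand, compactness is immediate: $Y:=H^{3\alpha+\sigma}\times H^{\beta/2+3\alpha+\sigma}$ embeds compactly into $X$ by Rellich's theorem on $\TT$, so $S(t)B$ is relatively compact in $X$ for every $t>0$ and every bounded $B$; any $T^*>0$ then witnesses the compactness requirement. For the continuity of $S(t)$, $t>0$, I would avoid a top-order difference estimate and instead combine two ingredients: first, the low-norm Lipschitz dependence $\|(\bar u,\bar v)(t)\|_{L^2\times H^{\beta/2}}\leq C\|(\bar u_0,\bar v_0)\|_{L^2\times H^{\beta/2}}$, obtained exactly as in the uniqueness part of Theorem \ref{localexistence} (the exponential factor there being uniformly bounded over $B$ by the $C([0,T],X)$ bound and Sobolev embedding); and second, the uniform $Y$-bound above along a convergent sequence of data. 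Interpolating $\|w\|_{H^{3\alpha}}\leq C\|w\|_{L^2}^{\theta}\|w\|_{H^{3\alpha+\sigma}}^{1-\theta}$ between the $L^2$-convergence of the difference and its $Y$-boundedness then yields convergence in $X$, which is the desired continuity (and analogously for $v$, interpolating between $H^{\beta/2}$ and $H^{\beta/2+3\alpha+\sigma}$).

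The main obstacle I expect is the uniform smoothing estimate of the second paragraph: the chemotactic nonlinearity $\pax(u\Lambda^{\beta-1}Hv)$ loses one derivative, so the Duhamel bootstrap must be arranged so that the $\alpha$-fold parabolic gain strictly dominates this loss uniformly over $B$, which is delicate near $s=t$ and forces both the restriction $\sigma<\alpha$ and a careful use of tame product and commutator estimates of Kenig--Ponce--Vega type (Lemma \ref{lemaaux2}). Everything else---the identity, the semigroup property from uniqueness, time-continuity from global existence, and the compact embedding---is routine once this gain of regularity is available.
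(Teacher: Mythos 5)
Your proposal is correct in substance but reaches the compactness and continuity properties by a genuinely different route than the paper. The paper proves the $C([0,T],H^{3\alpha}\times H^{\beta/2+3\alpha})$ regularity by a duality/Kato--Ponce estimate on $\pat\Lambda^{3\alpha}u$ and $\pat\Lambda^{3\alpha+\beta/2}v$ (an Aubin--Lions/Lions--Magenes argument), and obtains compactness by invoking the analyticity result (Theorem \ref{smoothingeffect}, which is exactly why the paper needs $3\alpha+\beta/2\geq 3.5\alpha\geq 4$, the same arithmetic you identify) to place $S(t)(u_0,v_0)$ in $H^{3.5\alpha}\times H^{\beta+3\alpha}$ for $t\geq\delta$, combined with the absorbing-set machinery of Theorem \ref{globalpp1abs} to make that bound uniform for $t\geq T^*$. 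You instead quote Theorem \ref{globalpp1} for the time-continuity and replace the analyticity step by a Duhamel/mild-solution argument with the fractional heat semigroup, gaining only $\sigma$ extra derivatives; this is lighter, avoids any interaction with the analyticity horizon $\tilde T$, and your interpolation argument for the continuity of $S(t)$ is more explicit than the paper's one-line appeal to energy estimates. One quantitative slip: since the forcing $\pax(u\Lambda^{\beta-1}Hv)$ is only controlled in $H^{3\alpha-1}$, applying $\Lambda^{3\alpha+\sigma}$ to the Duhamel integrand produces the singularity $(t-s)^{-(\sigma+1)/\alpha}$, not $(t-s)^{-\sigma/\alpha}$, so the correct constraint is $\sigma<\alpha-1$ rather than $\sigma<\alpha$; this is harmless here because $\alpha\geq 8/7>1$ leaves a nonempty range of admissible $\sigma$, but as stated your threshold would fail for $\alpha$ close to $1$.
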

\begin{proof} As in Theorem \ref{globalpp1} we have
\begin{equation}\label{1}
\Lambda^{3\alpha}u\in L^\infty([0,T],L^2)\cap L^2([0,T],H^{\alpha/2}),
\end{equation}
\begin{equation}\label{2}
\Lambda^{3\alpha+\beta/2}v\in L^\infty([0,T],L^2)\cap L^2([0,T],H^{\beta/2}).
\end{equation}
We have to prove that 
$$
\pat\Lambda^{3\alpha}u\in L^2([0,T],H^{-\alpha/2}),\pat\Lambda^{3\alpha+\beta/2}v\in L^2([0,T],H^{-\beta/2}).
$$
By duality and the Kato-Ponce inequality, we have
\begin{eqnarray*}
\|\pat \Lambda^{3\alpha}u\|_{\dot{H}^{-\alpha/2}}&=&\sup_{\|\phi\|_{H^{\alpha/2}}\leq 1}\left|\int_\TT\pat \Lambda^{3\alpha}u \phi dx\right|\\
&\leq&\mu\|\Lambda^{3.5\alpha}u\|_{L^2}+\|\Lambda^{2.5\alpha}(u\Lambda^{\beta}v)\|_{L^2}+\|\Lambda^{2.5\alpha}(\pax u \Lambda^{\beta-1}Hv)\|_{L^2}\\
&&+r\|\Lambda^{2.5\alpha}u\|_{L^2}+r\|\Lambda^{2.5\alpha}(u^2)\|_{L^2}\\
&\leq& C\left(\|\Lambda^{3.5\alpha}u\|_{L^2}+\|\Lambda^{2.5\alpha}u\|_{L^2}\|\Lambda^{\beta}v\|_{L^\infty}+\|\Lambda^{\beta+2.5\alpha}v\|_{L^2}\|u\|_{L^\infty}\right.\\
&&+\|\Lambda^{2.5\alpha}\pax u\|_{L^2}\|\Lambda^{\beta-1}Hv\|_{L^\infty}+\|\pax u\|_{L^\infty}\| \Lambda^{\beta+2.5\alpha-1}v\|_{L^2}\\
&&\left.+\|\Lambda^{2.5\alpha}u\|_{L^2}+\|u\|_{L^\infty}\|\Lambda^{2.5\alpha}u\|_{L^2}\right)\\
&\leq& C\left(\|u\|_{\dot{H}^{3.5\alpha}}+\|u\|_{H^{3\alpha}}\|v\|_{H^{3\alpha+\beta/2}}+\|v\|_{\dot{H}^{\beta+3\alpha}}\|u\|_{H^{3\alpha}}\right.\\
&&+\|u\|_{\dot{H}^{3.5\alpha}}\|v\|_{H^{\beta/2+3\alpha}}+\|u\|_{H^{3\alpha}}\|v\|_{H^{\beta+3\alpha}}\\
&&\left.+\|u\|_{H^{3\alpha}}+\|u\|_{H^{3\alpha}}\|u\|_{H^{3\alpha}}\right),
\end{eqnarray*}
and we conclude the bound for $\pat\Lambda^{3\alpha}u$:
\begin{equation}\label{3}
\int_0^T \|\pat \Lambda^{3\alpha}u(s)\|^2_{\dot{H}^{-\alpha/2}}ds\leq C.
\end{equation}
We proceed in the same way to get
\begin{equation}\label{4}
\int_0^T\|\pat \Lambda^{3\alpha+\beta/2}v(s)\|^2_{\dot{H}^{-\alpha/2}}ds\leq C
\end{equation}
The bounds \eqref{3} and \eqref{4} together with \eqref{1} and \eqref{2} imply
$$
u\in C([0,T],\dot{H}^{3\alpha}),\,v\in C([0,T],\dot{H}^{3\alpha+\beta/2}).
$$
To get the full norm we use 
$$
u\in L^\infty([0,T],L^2)\cap L^2([0,T],H^{\alpha/2}), \;
v\in L^\infty([0,T],L^2)\cap L^2([0,T],H^{\beta/2}).
$$
and repeat the argument for 
$$
\pat u\in L^2([0,T],H^{-\alpha/2}),\pat v\in L^2([0,T],H^{-\beta/2}).
$$
This implies
$$
S(\cdot)(u_0,v_0)\in C([0,T], H^{3\alpha}\times H^{\beta/2+3\alpha}).
$$
The semigroup property follows from the uniqueness of the classical solution. Having fixed $s_0$, the continuity of 
$$
S(s_0)(\cdot,\cdot):H^{3\alpha}\times H^{\beta/2+3\alpha}\mapsto H^{3\alpha}\times H^{\beta/2+3\alpha},
$$
can be obtained with the energy estimates. Finally, we use Theorem \ref{smoothingeffect} to get that $S(t)(u_0,v_0)\in H^{3.5\alpha}\times H^{\beta+3\alpha}$ if $t\geq \delta$, for every initial data and $\delta>0$. As in Theorem \ref{globalpp1abs}, we obtain the existence of $T^*$ and a constant $C$ such that
$$
\max_{t\geq T^*}\{\|u(t)\|_{H^{3.5\alpha}}+\|v(t)\|_{H^{\beta+3\alpha}}\}\leq C.
$$
Using the compactness of the embeddings $H^{\epsilon}\hookrightarrow L^2$, we conclude the result.
\end{proof}
\begin{remark}
The restriction $\alpha\leq\beta$ is to get the existence of the absorbing sets when applying Theorem \ref{globalpp1abs}. The restriction $8/7\leq \alpha$ is to get
$$
3\alpha+\beta/2\geq 3.5\alpha\geq 4,
$$
to invoke Theorem \ref{smoothingeffect}.
\end{remark}

\begin{proof}[Proof of Theorem \ref{attractor}]
We can use the previous Lemma together with Theorem \ref{globalpp1abs} and Theorem 1.1 in \cite{temambook} to conclude Theorem \ref{attractor}.
\end{proof}

\section{Proof of Theorem \ref{oscillations}: The number of relative maxima}\label{S8}
Finally, let us provide the proof of Theorem \ref{oscillations}:
\begin{proof}[Proof of Theorem \ref{oscillations}]
Using Theorem \ref{smoothingeffect} for $\tilde{T}/(N-1)<t<\tilde{T}$, $N\geq3$ and $\omega=\omega_0$ defined in \eqref{omega0}, we have that the solutions become analytic in a strip with width at least 
$$
\mathcal{W}=\frac{\omega \tilde{T}}{N}=\frac{\omega}{N}\frac{1+\|u_0\|_{H^{3}(\TT)}^2+\|v_0\|_{H^{4}(\TT)}^2}{3\mathcal{K}},
$$
and $\mathcal{K}$ given by \eqref{K}. We have
$$
\omega\tilde{T}\left(\frac{1}{N-1}-\frac{1}{N}\right)\leq \omega t-\mathcal{W}
$$
and using Cauchy's formula and Hadamard's three lines theorem
$$
\|\pax u\|_{L^\infty(\{|\Im z|\leq\mathcal{W}\})}\leq \frac{N(N-1)\|u\|_{L^\infty(\{|\Im z|\leq\omega t\})}}{\omega \tilde{T}}\leq \frac{\sqrt{2}(N-1)\|u_0\|_{L^\infty(\TT)}}{\mathcal{W}},
$$
$$
\|\pax v\|_{L^\infty(\{|\Im z|\leq\mathcal{W}\})}\leq \frac{N(N-1)\|v\|_{L^\infty(\{|\Im z|\leq\omega t\})}}{\omega \tilde{T}}\leq \frac{\sqrt{2}(N-1)\|v_0\|_{L^\infty(\TT)}}{\mathcal{W}}.
$$
Using Lemma \ref{grujic}, we have that for any $\epsilon>0$, $0<\tilde{T}/(N-1)< t<\tilde{T}$, $\TT=I^u_\epsilon\cup R^u_\epsilon=I^v_\epsilon\cup R^v_\epsilon$, where $I^u_\epsilon,I^v_\epsilon$ are the union of at most $[\frac{4\pi}{\mathcal{W}}]$ intervals open in $\TT$, and
\begin{itemize}
\item $|\pax u(x)| \leq \epsilon, \text{ for all }x\in I^u_\epsilon,$
\item $\text{card}\{x \in R^u_\epsilon : \pax u(x)=0\}\leq
\frac{2}{\log 2}\frac{2\pi}{\mathcal{W}}\log\left(\frac{\sqrt{2}(N-1)\|u_0\|_{L^\infty(\TT)}}{\mathcal{W}\epsilon}\right),$
\item $|\pax v(x)| \leq \epsilon, \text{ for all }x\in I^u_\epsilon,$
\item $\text{card}\{x \in R^v_\epsilon : \pax v(x)=0\}\leq
\frac{2}{\log 2}\frac{2\pi}{\mathcal{W}}\log\left(\frac{\sqrt{2}(N-1)\|v_0\|_{L^\infty(\TT)}}{\mathcal{W}\epsilon}\right).$
\end{itemize}
\end{proof}

\begin{proof}[Proof of Corollary \ref{oscillationsattractor}]
Notice that in the case $\min\{\alpha,\beta\}>1$, we are free to choose
$$
\omega=\frac{N}{1+\|u_0\|_{H^{3}(\TT)}^2+\|v_0\|_{H^{4}(\TT)}^2},
$$
and we can improve the statement in Theorem \ref{oscillations}. Applying Lemma \ref{grujic}, we have that for any $\epsilon>0$, $\TT=I^u_\epsilon\cup R^u_\epsilon=I^v_\epsilon\cup R^v_\epsilon$, with $I^u_\epsilon,I^v_\epsilon$ are the union of at most $[12\pi\mathcal{K}_1]$ intervals open in $\TT$, and
\begin{itemize}
\item $|\pax u(x)| \leq \epsilon, \text{ for all }x\in I^u_\epsilon,$
\item $\text{card}\{x \in R^u_\epsilon : \pax u(x)=0\}\leq
\frac{12\pi\mathcal{K}_1}{\log 2}\log\left(\frac{\sqrt{18}\mathcal{K}_1(N-1)\|u_0\|_{L^\infty(\TT)}}{\epsilon}\right),$
\item $|\pax v(x)| \leq \epsilon, \text{ for all }x\in I^u_\epsilon,$
\item $\text{card}\{x \in R^v_\epsilon : \pax v(x)=0\}\leq
\frac{12\pi\mathcal{K}_1}{\log 2}\log\left(\frac{\sqrt{18}\mathcal{K}_1(N-1)\|v_0\|_{L^\infty(\TT)}}{\epsilon}\right).$
\end{itemize}
We are interested in the points of maximum such that they are close to regions with derivative bigger than one (the so-called peaks). Consequently, we take $\epsilon=1$ and $N=3$. Finally, notice that, in the attractor, we have
$$
\|u(t)\|_{L^\infty}\leq C^2_{SE}(\alpha)S(H^{\alpha/2})
$$
to conclude the result.
\end{proof}
The proof of Corollary \ref{oscillationsattractor2} follows from the same ideas as before.

\section{Numerical simulations}\label{S7}
\subsection{Algorithm}
The dynamics differs substantially depending on the value of the parameters presents in the problem. Hence, in order to reduce the number of parameters, let us consider the one-parameter problem
\begin{eqnarray}\label{eqa1num}
\pat u & = & -\Lambda^\alpha u+\chi\pax\cdot(u\Lambda^{\beta-1} H v) +u(1-u)\\ 
\label{eqa2num}
\pat v & = & -\Lambda^\beta v- v+u, 
\end{eqnarray}
with $\chi>0$. 

To simulate this problem we use the well-known Fourier-collocation method. First we discretize the spatial domain using $N$ uniformly distributed points. Notice that the numerical solution $(u_N,v_N)$ will have $N$ points for $u_N$ and $N$ points for $v_N$.

We use the Fast Fourier Transform (FFT) to change to the frequency space. There the differential operators and the Hilbert transform act as multipliers. Indeed, if we denote the FFT using $\text{FFT}(\cdot)$, we have
$$
\text{FFT}(\Lambda^\gamma u_N)=|\xi|^\gamma \text{FFT}(u_N),\text{FFT}(\Lambda^{\beta-1} H u_N)=-i\xi|\xi|^{\beta-2}\text{FFT}(u_N).
$$

To compute the nonlinear term we use the Inverse Fast Fourier Transform (IFFT) to change back to the physical space. We multiply there appropriately and then we go back to the frequency space using FFT. In particular, writing $\text{IFFT}(\cdot)$ for the IFFT we have that the nonlinearity can be written as
$$
i\xi\text{FFT}\left(u_N\text{IFFT}(-i\xi|\xi|^{\beta-2}\hat{u_N})\right).
$$ 

In this way we can write our problem as a ordinary differential equation in the frequency space. Now we can advance up to time $T$ using our favorite numerical integrator. In particular, we choose the function \texttt{ode45} in Matlab.

\subsection{Results}
\subsubsection{$\alpha=1,\beta=1$} 
First, we study the case $\alpha=1$ and $\beta=1$. For high values of $\chi$, we observe the same chaotic behavior as in \cite{Hillen4}. The homogeneous steady state is unstable and a number of peaks eventually emerge and merge with other peaks (see Figures \ref{emerging}, \ref{merging} and \ref{peaksdynamics2}).
\begin{figure}[t!]
    \centering
    \includegraphics[scale=0.28]{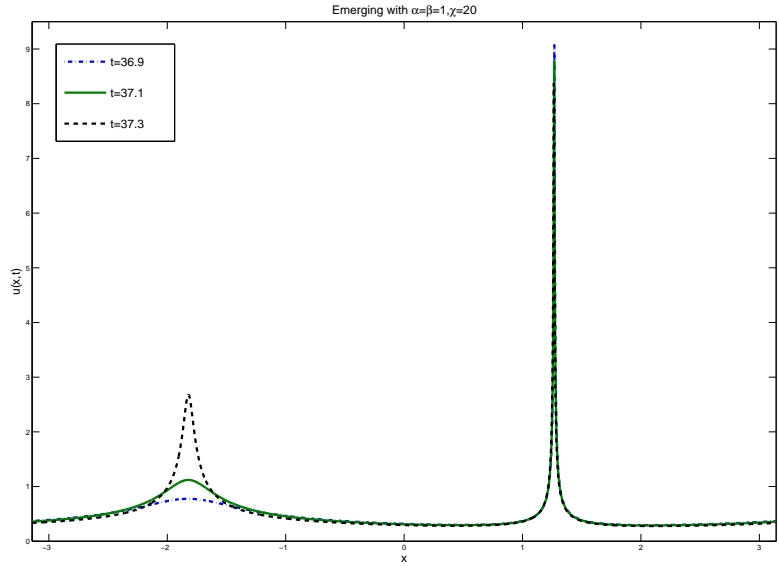}
\caption{Emerging peak.}
\label{emerging}
\end{figure}
\begin{figure}[t!]
    \centering
    \includegraphics[scale=0.28]{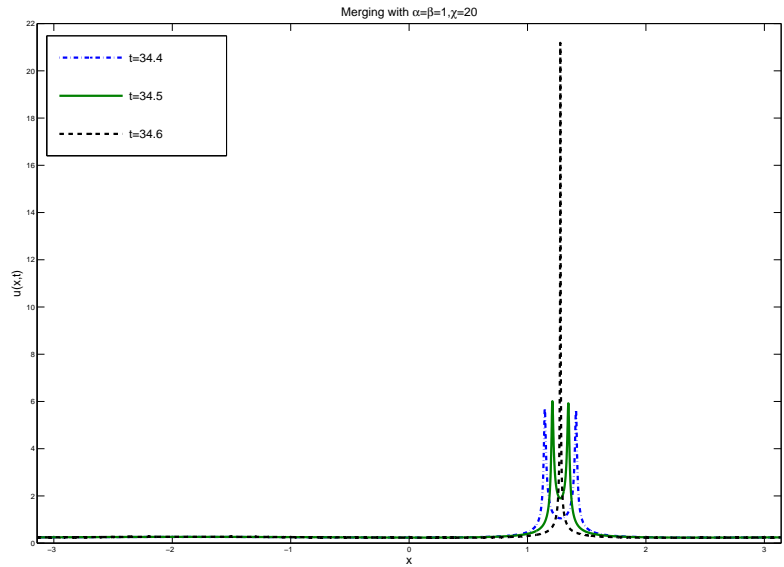}
\caption{Merging peaks.}
\label{merging}
\end{figure}
\begin{figure}[t!]
    \centering
    \includegraphics[scale=0.28]{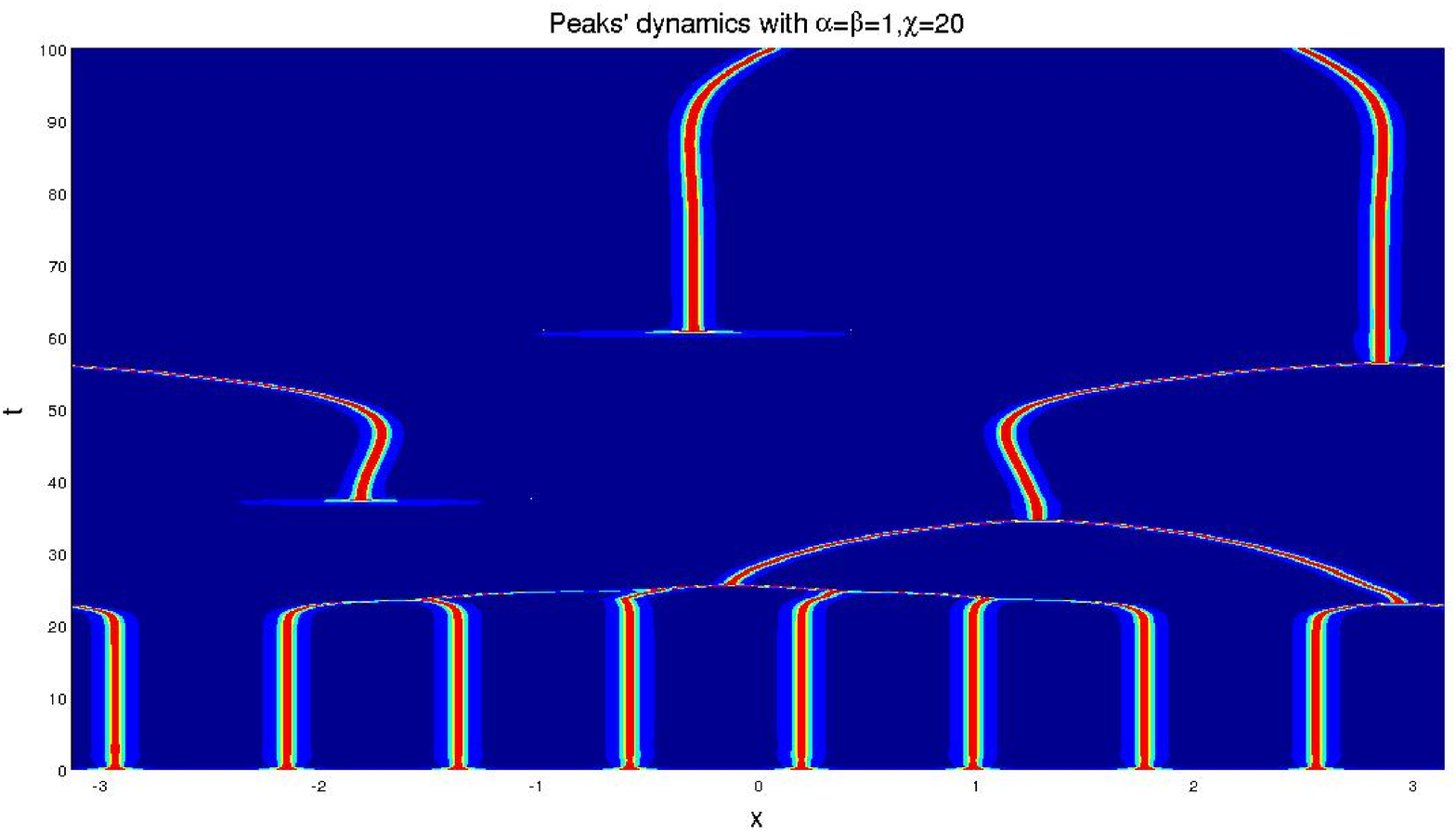}
\caption{Tracking the peaks for the case $\alpha=\beta=1$, $\chi=20$ up to time $T=100$.}
\label{peaksdynamics2}
\end{figure}
We take $N=2^{13}$, $T=30$ and the initial data 
\begin{equation}\label{initialdata}
u_0(x)=1,\; v_0(x)=0.1\sin(8x)+1.
\end{equation}
In order to better understand the role of $\chi$, we approximate the solution for different values $\chi_i\in [5,20]$. The step between our values is
$$
\chi_{i+1}-\chi_i=0.5.
$$
\begin{figure}[t!]
    \centering
    \includegraphics[scale=0.31]{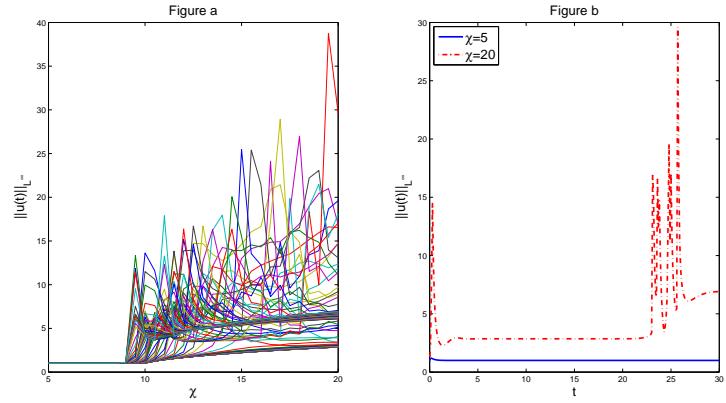}
\caption{Transition to chaos}
\label{transtion}
\end{figure}
The outcome is plotted in Figure \ref{transtion}. In the part \emph{a} of the figure, we plot the solutions corresponding to different values of $\chi$ and times $20\leq t\leq 30$. Notice that every line corresponds to a fixed time $t$. We see that for lower values of $\chi$ the solution tends to the homogeneous steady state, while for large values of $\chi$ the solution develops chaotic behavior. In particular, we can see how a small change in $\chi$ has a substancial impact on the solution at a fixed time $t_i$. We can see also that, for a fixed value $\chi_i$, the solution at different times take very different values.

In part \emph{b} of the same figure, we plot $\|u_N(t)\|_{L^\infty}$ for $\chi=5$ (solid line) and 
$\chi=20$ (dotted line).

\subsubsection{$\alpha=1.5$, $\beta=2$}
Now let us study the case $\alpha=1.5$ and $\beta=2$. Here we take $N=2^{11}$. There are two different cases. One corresponds to $T=100$ and $\chi=20$ and the other to $T=150$ and $\chi=30$. The initial data in both cases is 
$$
u_0(x)=1,\;v_0(x)=2+\text{random}(x),
$$
where $\text{random}(x)$ is a uniformly distributed in $[-0.1,0.1]$ random sample. We recover the same chaotic behaviour with merging and emerging peaks. If we track the peaks, we get the results in Figure \ref{alpha15beta2}.

\begin{figure}[t!]
    \centering
    \includegraphics[scale=0.38]{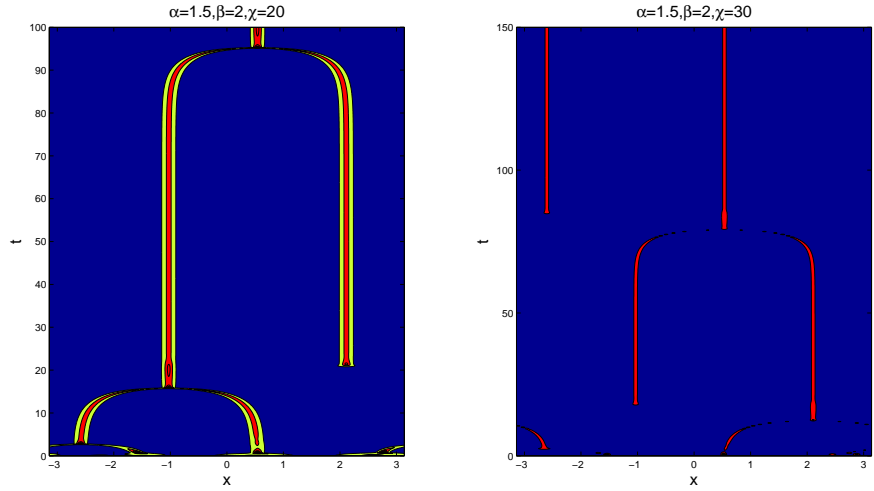}
\caption{Tracking the peaks for $\alpha=1.5$, $\beta=2$ and different values of $\chi$.}
\label{alpha15beta2}
\end{figure}

\subsubsection{$\alpha=1.5$, $\beta=1$}
Now, we consider the case $\alpha=1.5$ and $\beta=1$. Here we take $N=2^{13}$. We simulate the evolution for different values of $\chi\in[16,20]$ up to time $T=20$. The initial data is given by \eqref{initialdata}. In this hyperviscous case, the solutions tend to the homogeneous steady state before the instability appears. Then for time $10<t<20$, $\|u_N(t)\|_{L^\infty}$ grows and several peaks emerge (see Figure \ref{alpha15beta1}).
\begin{figure}[t!]
    \centering
    \includegraphics[scale=0.35]{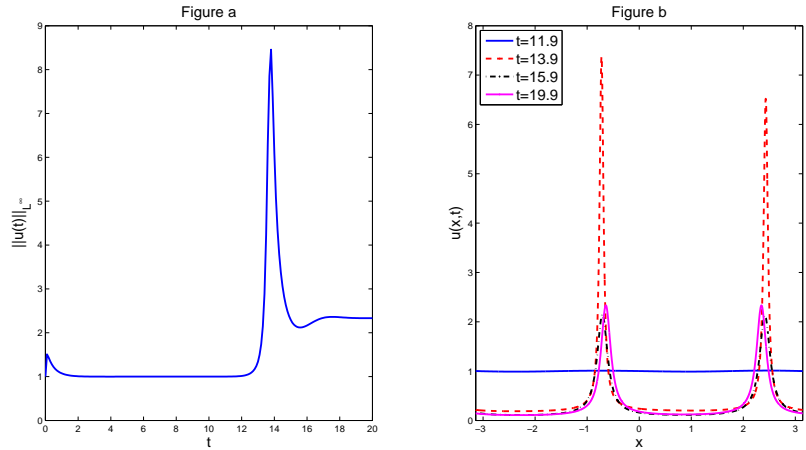}
\caption{a) $\|u_N(t)\|_{L^\infty}$ and b) $u_N(t_i)$ for some $11<t_i<20$ and $\alpha=1.5$, $\beta=1$ and $\chi=20$.}
\label{alpha15beta1}
\end{figure}

\subsubsection{$\alpha=0.5$, $\beta=1$}
In the case $\alpha=0.5$ and $\beta=1$ we take $N=2^{14}$. In this hypoviscous case, we simulate the solution corresponding to two different initial data and two different values of the parameter $\chi$
\begin{equation}\label{initialdata2}
u_0(x)=1,\; v_0(x)=0.1\sin(10x)+1\text{ and }\chi=20,
\end{equation}
\begin{equation}\label{initialdata2b}
u_0(x)=1,\; v_0(x)=0.1\cos(x)\exp(-x^2)+1\text{ and }\chi=10.
\end{equation}
First, we compute the solution corresponding to \eqref{initialdata2} with $\chi=20$ up to time $T=0.11$. This solution appears to have a finite time singularity (see Figure \ref{alpha05beta1}), \emph{i.e.} 
$$
\limsup_{t\rightarrow T_{max}}\|\pax u(t)\|_{L^\infty}=\infty.
$$
Furthermore, if we use least squares to fit a curve with expression
\begin{equation}\label{fitted}
y(t)=\frac{a_1}{(a_2-t)^{a_3}},
\end{equation}
to the numerical solution $\|\pax u_N(t)\|_{L^\infty}$, we obtain the parameters
$$
a_1=0.03389,\;a_2=0.11244,\;a_3=2.14248.
$$
Notice that this evidence of singularity agrees with Theorem \ref{continuation}.
\begin{figure}[t!]
    \centering
    \includegraphics[scale=0.35]{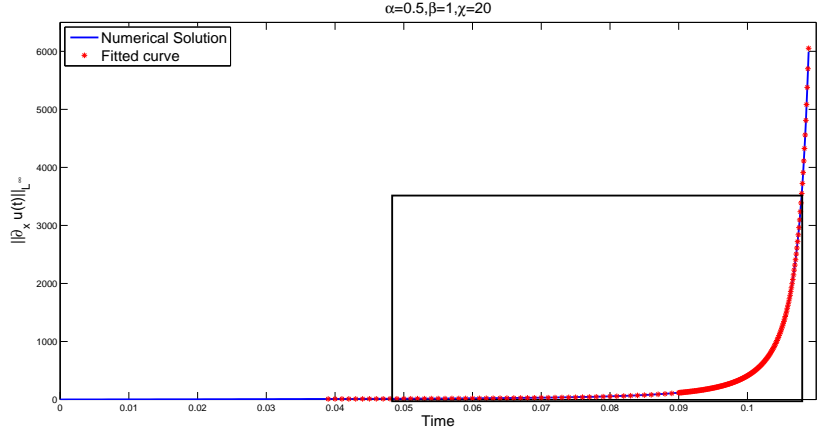}
\caption{$\|\pax u_N(t)\|_{L^\infty}$ for some $0<t_i<0.11$ and $\alpha=0.5$, $\beta=1$ and $\chi=20$. The red points are in the fitted curve. In the box, the points that have been used in the fitting process.}
\label{alpha05beta1}
\end{figure}
On the other hand, if $\chi= 10$, the solution $u(t)$ corresponding to \eqref{initialdata2b} grows in $C^1$ (see Figure \ref{alpha05beta1b}) but a curve like \eqref{fitted} does not approximate the numerical solution $u_N(t)$ well. As a consequence, the value of $\chi$ in the formation of a finite time singularity seems to be crucial.
\begin{figure}[t!]
    \centering
    \includegraphics[scale=0.4]{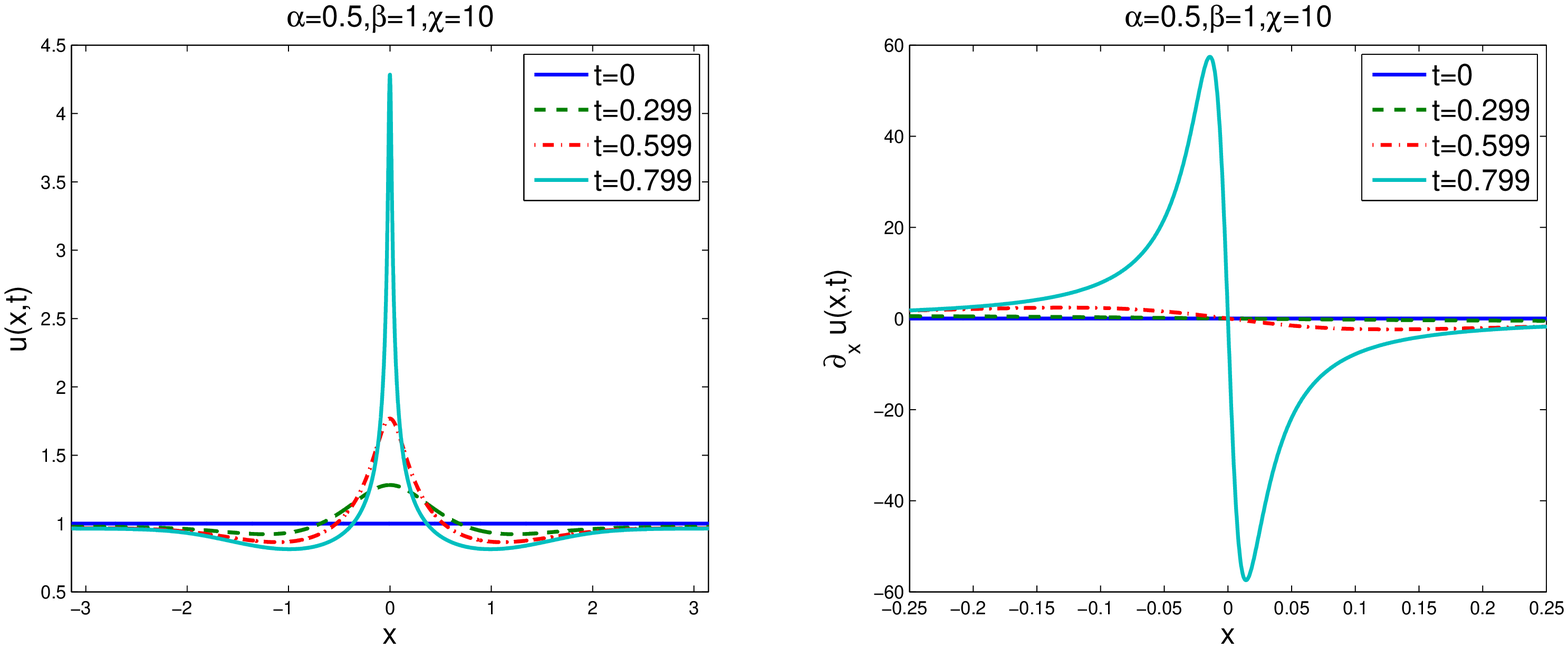}
\caption{a) $u(x,t)$, b) $\pax u_N(t)$ for some $0<t_i<0.9$ and $\alpha=0.5$, $\beta=1$ and $\chi=10$.}
\label{alpha05beta1b}
\end{figure}
Notice that $\chi$ is, roughly speaking, $1/r$. Consequently, the case $r$ \emph{small} presents evidence of singularity while the case $r$ \emph{big} appears to remain smooth for finite time. This is in accordance with \cite{Winkler4}, see also our recent study \cite{BGsuper}.

\appendix
\section{Auxiliary Lemmas}
We state the Kato-Ponce inequality and the Kenig-Ponce-Vega commutator estimate for $[\Lambda^s,F]G=\Lambda^s(FG)-F\Lambda^s G$ and where $\Lambda=\sqrt{-\Delta}$ (see \cite{grafakos2013kato}, \cite{kato1988commutator}, \cite{KenigPonceVega}). 

\begin{lem}\label{lemaaux2}
Let $F,G$ be two smooth functions on $\TT^d$. Then we have the following inequalities:
$$
\|[\Lambda^s,F]G\|_{L^p}\leq C(s,p,p_i)\left(\|F\|_{W^{s,p_1}}\|G\|_{L^{p_2}}+\|G\|_{W^{s-1,p_3}}\|\nabla F\|_{L^{p_4}}\right),
$$
with
$$
\frac{1}{p}=\frac{1}{p_1}+\frac{1}{p_2}=\frac{1}{p_3}+\frac{1}{p_4},\; p,p_1,p_3\in(1,\infty),\; p_2,p_4\in[0,\infty],s>0.
$$
and 
$$
\|\Lambda^s(FG)\|_{L^p}\leq \frac{C_{KP}(s,p,p_i)}{2}\left(\|\Lambda^s F\|_{L^{p_1}}\|G\|_{L^{p_2}}\right.\\
\left.+\|\Lambda^s G\|_{L^{p_3}}\|F\|_{L^{p_4}}\right),
$$
with
$$
\frac{1}{p}=\frac{1}{p_1}+\frac{1}{p_2}=\frac{1}{p_3}+\frac{1}{p_4}, \frac{1}{2}<p<\infty,1<p_i\leq\infty, s>\max\{0,d/p-d\}.
$$
\end{lem}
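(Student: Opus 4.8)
The plan is to regard both inequalities as classical results of harmonic analysis and to establish them by a Littlewood--Paley / paraproduct decomposition; since the estimates are local in frequency, the periodic statement on $\TT^d$ follows from its $\RR^d$ counterpart by a standard transference argument (or, equivalently, by running the whole argument directly with periodic Littlewood--Paley projections $\Delta_j$). First I would fix a dyadic partition of unity, write $F=\sum_j\Delta_j F$ and $G=\sum_k\Delta_k G$, and invoke Bony's decomposition $FG=T_F G+T_G F+R(F,G)$, splitting the product into the low--$F$/high--$G$ paraproduct, the high--$F$/low--$G$ paraproduct, and the resonant (high--high) remainder. All subsequent estimates are carried out block by block, using Bernstein's inequality to move $\Lambda^s$ across frequency-localized factors and the vector-valued Fefferman--Stein maximal inequality to resum the square function.

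For the fractional Leibniz rule I would estimate the three pieces separately. On $T_G F$, where the high frequency sits on $F$, the operator $\Lambda^s$ acts essentially on $F$, and the maximal-function argument (valid for $p,p_1,p_3\in(1,\infty)$, which is where those hypotheses enter) yields a bound by $\|\Lambda^s F\|_{L^{p_1}}\|G\|_{L^{p_2}}$; the term $T_F G$ is symmetric and produces $\|\Lambda^s G\|_{L^{p_3}}\|F\|_{L^{p_4}}$. The resonant term $R(F,G)$ is where the condition $s>\max\{0,d/p-d\}$ is used: resumming the diagonal blocks requires $\Lambda^s$ to supply enough decay for a geometric series in the frequency gap to converge, and one distributes $\Lambda^s$ onto either factor to match the two product terms.

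The commutator estimate is the more delicate one, and its crux is a cancellation concentrated in the low--$F$/high--$G$ paraproduct $T_F G$. There $\Lambda^s$ applied to the product is governed by the high frequency carried by $G$, so $\Lambda^s(T_F G)$ agrees with $T_F(\Lambda^s G)\approx F\Lambda^s G$ to leading order, and this leading term is exactly what is subtracted in $[\Lambda^s,F]G=\Lambda^s(FG)-F\Lambda^s G$. To make this precise I would Taylor-expand the bilinear multiplier symbol $|\zeta+\eta|^s-|\eta|^s$, with $\zeta$ the frequency of $F$ and $\eta$ that of $G$ in the regime $|\zeta|\ll|\eta|$, in the variable $\zeta$: the zeroth-order term cancels and the first-order term carries a factor $\sim|\zeta|\,|\eta|^{s-1}$, which is precisely the gain that converts one power of $\Lambda^s$ on $G$ into $\nabla F$ paired with $\Lambda^{s-1}G$. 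This produces the term $\|\nabla F\|_{L^{p_4}}\|G\|_{W^{s-1,p_3}}$, while the high--$F$/low--$G$ paraproduct $T_G F$ and the resonant part $R(F,G)$, where no cancellation is available, are bounded exactly as in the product rule and yield $\|F\|_{W^{s,p_1}}\|G\|_{L^{p_2}}$.

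The main obstacle I expect is twofold. First, making the symbol expansion in the commutator rigorous: one must treat $[\Lambda^s,\cdot]$ as a Coifman--Meyer-type bilinear Fourier multiplier and verify the required mean-value bounds on the smooth symbol away from the diagonal. Second, handling the endpoint and low-integrability exponents, namely $p_2,p_4\in[0,\infty]$ and the product range $1/2<p<\infty$: the naive maximal-function resummation degenerates as $p\downarrow 1$ and below, so there one needs Hardy-space techniques (or the refined approach of Grafakos--Oh) and, at the $L^\infty$ endpoint, a $\mathrm{BMO}$-type substitute. Since in the applications of this paper the exponents always lie strictly inside the admissible range, in practice I would only need the non-endpoint version, for which the paraproduct argument above is self-contained; for the sharp general statement I would simply invoke the results of Grafakos--Oh, Kato--Ponce, and Kenig--Ponce--Vega.
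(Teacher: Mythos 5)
The paper does not actually prove this lemma: it is stated in the appendix as a quoted auxiliary result, with the proof delegated entirely to the cited references \cite{grafakos2013kato}, \cite{kato1988commutator}, \cite{KenigPonceVega}. Your paraproduct outline therefore does strictly more than the paper, and it is a correct sketch of the standard argument found in those references: Bony's decomposition, the maximal-function/square-function resummation of the two paraproducts (which is where $p,p_1,p_3\in(1,\infty)$ enters), the role of $s>\max\{0,d/p-d\}$ in resumming the resonant block, and, for the commutator, the cancellation of the zeroth-order term in the expansion of $|\zeta+\eta|^{s}-|\eta|^{s}$ in the regime $|\zeta|\ll|\eta|$, which trades one derivative on $G$ for $\nabla F$ and produces the $\|G\|_{W^{s-1,p_3}}\|\nabla F\|_{L^{p_4}}$ term. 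Your closing remark --- that for the sharp ranges (the $L^\infty$ endpoints in $p_2,p_4$ and the quasi-Banach range $1/2<p\le 1$) one should simply invoke Grafakos--Oh, Kato--Ponce and Kenig--Ponce--Vega --- is in fact exactly what the paper does for the entire statement, and it is the sensible choice here, since only non-endpoint exponents are used in the body of the paper. Two small caveats, neither of which affects the substance: the printed range $p_2,p_4\in[0,\infty]$ is not meaningful for Lebesgue exponents and is presumably intended as $(1,\infty]$, which is what your sketch implicitly assumes; and on $\TT^d$ one should either run the argument with periodic Littlewood--Paley blocks from the start or take some care in transferring the bilinear multiplier bounds from $\RR^d$, as you note.
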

\begin{remark}In particular, we are using the notation
$$
C_{KPV}(\alpha)=C\left(\frac{\alpha}{2},2,\frac{2}{\alpha-1},2+\frac{2\alpha-2}{2-\alpha},\infty,2\right).
$$
\end{remark}
We require the following uniform Gronwall lemma (see \cite{temambook}).
\begin{lem}\label{UGL}
Suppose that $g$, $h$, $y$ are non-negative, locally integrable functions on $(0,\infty)$ and ${dy}/{dt}$ is locally integrable. If
there are positive constants $a_1$, $a_2$, $a_3$, $b$ such that
$$
\frac{dy}{dt}\leq gy+h,\quad
\int_t^{t+b} g(s)ds\leq a_1,\;\int_t^{t+b} h(s)ds\leq a_2,\;\int_t^{t+b} y(s)ds\leq a_3
$$
for $t \ge 0$, then
$$
y(t+b)\leq \left(\frac{a_3}{b}+a_2\right)e^{a_1}.
$$
\end{lem}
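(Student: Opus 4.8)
The plan is to reduce the statement to the classical (pointwise-data) Gronwall inequality applied on sliding subintervals, and then to integrate out the dependence on the left endpoint, so that the hypothesis $\int_t^{t+b} y(s)\,ds \le a_3$ can replace a pointwise bound on $y$, which is unavailable.

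First I would fix $t \ge 0$ and an arbitrary $s$ with $t \le s \le t+b$, and apply the standard Gronwall inequality to the differential inequality on the interval $[s, t+b]$, where now $t$ is the base point and I integrate in a dummy variable $\tau$. This yields
$$
y(t+b) \le \left( y(s) + \int_s^{t+b} h(\tau)\,d\tau \right) \exp\left( \int_s^{t+b} g(\tau)\,d\tau \right).
$$
Since $g, h \ge 0$ and $[s, t+b] \subseteq [t, t+b]$, the two integrals are controlled by $\int_t^{t+b} g \le a_1$ and $\int_t^{t+b} h \le a_2$ respectively. Hence for every such $s$,
$$
y(t+b) \le \big( y(s) + a_2 \big)\, e^{a_1}.
$$

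Next I would integrate this inequality in $s$ over $[t, t+b]$. The left-hand side does not depend on $s$, so it contributes $b\,y(t+b)$, while on the right the third hypothesis gives $\int_t^{t+b} y(s)\,ds \le a_3$. This produces
$$
b\, y(t+b) \le (a_3 + a_2 b)\, e^{a_1},
$$
and dividing by $b>0$ yields exactly the claimed bound $y(t+b) \le (a_3/b + a_2)\, e^{a_1}$.

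The only point requiring care is justifying the classical Gronwall step under the stated regularity (that $y$ is merely locally integrable with locally integrable derivative, rather than smooth); this is handled by observing that $\tau \mapsto y(\tau)\exp\!\left(-\int_s^\tau g\right)$ is absolutely continuous, so almost everywhere its derivative is bounded by $h(\tau)\exp\!\left(-\int_s^\tau g\right)$, and the inequality is recovered by integrating from $s$ to $t+b$. There is no genuine obstacle here; the single idea that makes the estimate \emph{uniform} in $t$ is integrating in the starting point $s$, which trades the missing pointwise control of $y$ for its integral control $a_3$.
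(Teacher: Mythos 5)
Your proof is correct, and it is exactly the standard argument for the uniform Gronwall lemma as given in the cited reference (Temam, Lemma III.1.1): apply pointwise Gronwall on $[s,t+b]$, then average over the left endpoint $s\in[t,t+b]$ to trade the missing pointwise bound on $y$ for its integral bound $a_3$. The paper does not prove this lemma itself but simply cites \cite{temambook}, so your argument coincides with the intended one.
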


The last Lemma studies the number of critical points of an analytic function (compare Gruji\'c \cite{AnalyticityKuramotoGrujic}).
\begin{lem}\label{grujic}
Let $w>0$, and let $u$ be analytic in the neighbourhood of $\{z: |\Im z|\leq w\}$ and $2\pi$-periodic in the $x$-direction. Then, for any $\epsilon>0$ holds $\TT=I_\epsilon\cup R_\epsilon$, where $I_\epsilon$ is an union of at most $[\frac{4\pi}{w}]$ intervals open in $\TT$, and
\begin{itemize}
\item $|\pax u(x)| \leq \epsilon, \text{ for all }x\in I_\epsilon,$
\item $\text{card}\{x \in R_\epsilon : \pax u(x)=0\}\leq
\frac{2}{\log 2}\frac{2\pi}{w}\log\left(\frac{\max_{|\Im z|\leq w}|\pax u(z)|}{\epsilon}\right).$
\end{itemize}
\end{lem}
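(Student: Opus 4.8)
The plan is to localise the real zeros of the analytic function $f=\pax u$ by means of Jensen's inequality for the zero-counting function. Recall that if $f$ is holomorphic on $\overline{D(\xi,R)}$ with $f(\xi)\neq0$, and $N(r)$ denotes the number of its zeros (counted with multiplicity) in $D(\xi,r)$ for $r<R$, then Jensen's formula together with the monotonicity of $N$ gives
\[
N(r)\,\log\frac{R}{r}\leq \log\frac{\max_{|z-\xi|=R}|f(z)|}{|f(\xi)|}.
\]
Since a real zero is in particular a complex zero inside the disk, $N(r)$ bounds the number of real zeros of $f$ in any interval contained in $D(\xi,r)$. I would use this with $R=w$ and $r=w/2$, so that $\log(R/r)=\log2$ produces the constant $1/\log2$ in the statement.

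First I would partition $\TT$, which has length $2\pi$, into $n=\lceil 4\pi/w\rceil$ consecutive intervals $J_1,\dots,J_n$ of length at most $w/2$; this is the count $[4\pi/w]$ appearing in the statement. For each $J_j$ one of two alternatives holds: either $|f(x)|\leq\epsilon$ for all $x\in J_j$, and then $J_j$ is called flat, or else there is a point $\xi_j\in J_j$ with $|f(\xi_j)|>\epsilon$, and then $J_j$ is called active. I then set $I_\epsilon$ to be the union of the flat intervals (taken open) and $R_\epsilon=\TT\setminus I_\epsilon$. By construction $|\pax u|\leq\epsilon$ throughout $I_\epsilon$, and the number of connected components of $I_\epsilon$ is at most the number of subintervals, namely $[4\pi/w]$, as required.

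On an active interval $J_j$ I would apply the displayed Jensen inequality with centre $\xi_j$. As $\xi_j$ is real, the closed disk $\overline{D(\xi_j,w)}$ is contained in the strip $\{|\Im z|\leq w\}$, where $u$ and hence $f$ are analytic; thus the inequality applies and $\max_{|z-\xi_j|=w}|f|\leq\max_{|\Im z|\leq w}|\pax u|$. Moreover, since $\xi_j\in J_j$ and $|J_j|\leq w/2$, the interval $J_j$ is contained in $D(\xi_j,w/2)$, so every zero of $f$ in $J_j$ is accounted for by $N(w/2)$. Using $|f(\xi_j)|>\epsilon$,
\[
\#\{x\in J_j:\pax u(x)=0\}\leq N(w/2)\leq \frac{1}{\log2}\log\frac{\max_{|\Im z|\leq w}|\pax u|}{\epsilon}.
\]
Summing over the at most $[4\pi/w]$ active intervals yields the claimed bound $\tfrac{2}{\log2}\tfrac{2\pi}{w}\log\bigl(\max_{|\Im z|\leq w}|\pax u|/\epsilon\bigr)$.

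The geometric bookkeeping — that the inner disks of radius $w/2$ cover the subintervals while the outer disks of radius $w$ stay within the strip of analyticity, and that merging adjacent flat intervals only lowers the component count of $I_\epsilon$ — is routine. The only substantial ingredient is the Jensen-type estimate for the counting function, and the point requiring care is the verification of its hypotheses: that $f$ is holomorphic on the closed disk $\overline{D(\xi_j,w)}$ (ensured by analyticity of $u$ in a neighbourhood of $\{|\Im z|\leq w\}$) and that it does not vanish at the centre $\xi_j$ (ensured by $|f(\xi_j)|>\epsilon>0$).
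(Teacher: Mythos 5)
The paper itself does not prove this lemma: it is quoted from Gruji\'c \cite{AnalyticityKuramotoGrujic}, and your argument reconstructs exactly the standard proof given there --- cover $\TT$ by arcs of length at most $w/2$, keep as $I_\epsilon$ the arcs where $|\pax u|\leq\epsilon$, and on each remaining arc apply the Jensen zero-counting estimate centred at a point where $|\pax u|>\epsilon$, with inner radius $w/2$ and outer radius $w$ so that the outer disk stays in the strip of analyticity and the ratio of radii produces the factor $1/\log 2$. The proof is correct; the only blemishes are cosmetic rounding issues: you use $\lceil 4\pi/w\rceil$ arcs, so when $4\pi/w\notin\ZZ$ the number of active arcs (hence your zero count) exceeds the stated constant by at most one arc's contribution, and a zero lying exactly on the circle $|z-\xi_j|=w/2$ requires letting the inner radius decrease to $w/2$ in the counting function. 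Neither affects the way the lemma is used in Theorem \ref{oscillations} and its corollaries, which only need the order of magnitude of the bound.
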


\section{Explicit expressions for the constants}\label{AppB}
Here we collect explicit expressions for the constants that we use in Theorem \ref{globalpp1abs} and Corollaries \ref{oscillationsattractor}, \ref{oscillationsattractor2}.
We define the radius of the absorbing set in $L^2$ as
\begin{multline}\label{SL2}
S(L^2)=e^r\bigg{[}3\mathcal{N}+\frac{C_{KP}(\alpha)^2\mathcal{N}^3}{\mu}2C_{FS}(\beta,\alpha,\lambda,\nu)\\
+\frac{(C_{KP}(\alpha)C_{GN}(\alpha))^{\frac{2\alpha+2}{\alpha-1}}4\mathcal{N}^{2}}{\mu}\left(2\mathcal{N}C_{FS}(\beta,\alpha,\lambda,\nu)\right)^{\frac{\alpha+1}{\alpha-1}}\bigg{]}.
\end{multline}
Let
\begin{multline}\label{SI}
I= r+\frac{r^2C^2_{SE}(\alpha)}{\mu}6\mathcal{N}+\frac{\mathcal{N}}{\nu}\left(\frac{3}{\nu}+2C_{FS}(\beta,\alpha,\lambda,\nu)\right)\\
\times\left(\frac{1}{2}+\frac{2C^2_{SE}(\alpha)+(C_I(\alpha))^2}{\mu}\right.\\
\times\left.\frac{\left[C_{KPV}(\alpha)(C^3_{SE}(\alpha)C^1_{SE}(\alpha)+C^4_{SE}(\alpha))\right]^2}{\mu}\right),
\end{multline}

the radius of the absorbing set in higher norms is given by
\begin{equation}\label{SHalpha2}
S(H^{\alpha/2})\leq S(L^2)+
S(\dot{H}^{\alpha/2})=S(L^2)\left(1+\frac{2(1+e^{-r})}{\mu}e^{2I}\right).
\end{equation}
We denote
\begin{equation}\label{K1}
\mathcal{K}_1(\alpha,\beta,\mu,\nu,r,\lambda)=1+(\mu+2+2r+\nu+\lambda+1)C_{SE}^2(\alpha)+\mathcal{C}_1+
2\mathcal{C}_2+\mathcal{C}_3+\mathcal{C}_4,
\end{equation}
\begin{equation}\label{K2}
\mathcal{K}_2(\alpha,\beta,\mu,\nu,r,\lambda)=1+(1+\mu+2+2r+\nu+\lambda+1)C_{SE}^2(1.1)+\mathcal{C}_5+\mathcal{C}_6,
\end{equation}
\begin{equation}\label{K}
\mathcal{K}(\alpha,\beta,\mu,\nu,r,\lambda)=\left\{\begin{array}{ll}
\mathcal{K}_1\text{ if }\alpha,\beta>1,\\
\mathcal{K}_2\text{ if }\min\{\alpha,\beta\}=1,
\end{array}\right.
\end{equation}
where 
\begin{equation}\label{C1}
\mathcal{C}_1=\max_{\xi\in\RR^+}4\omega\xi-\frac{\mu}{2}\xi^{\alpha} = 4\omega\left(\frac{8\omega}{\mu \alpha}\right)^{\frac{1}{\alpha-1}}-\frac{\mu}{2}\left(\frac{8\omega}{\mu \alpha}\right)^{\frac{\alpha}{\alpha-1}},
\end{equation}
\begin{equation}\label{C2}
\mathcal{C}_2 =\max_{\xi\in\RR^+}3\omega\xi-\nu\xi^{\beta} = 3\omega\left(\frac{3\omega}{\nu \beta}\right)^{\frac{1}{\beta-1}}-\nu\left(\frac{3\omega}{\nu \beta}\right)^{\frac{\beta}{\beta-1}},
\end{equation}
\begin{multline}\label{C3}
\mathcal{C}_3=2\bigg{[}\frac{(C_{SI}(\alpha))^2}{\omega}\frac{\alpha-1}{\alpha}\left(\frac{\omega^2 \alpha}{(C_{SI}(\alpha))^2}\right)^{-1/(\alpha-1)} +2\omega\\
+1.25 +2r+(17.5C_{SE}^2(\alpha))^2+\frac{(9rC_{SE}^2(\alpha))^2}{2}\bigg{]},
\end{multline}
\begin{equation}\label{C4}
\mathcal{C}_4=1.5+\frac{2(C_{KPV}^2C_{SE}^2(\alpha))^2}{\mu},
\end{equation}
\begin{equation}\label{C5}
\mathcal{C}_5=2\bigg{[}\frac{\mu}{4}+1.25
+2r+(17.5C_{SE}^2(1.1))^2+\frac{(9rC_{SE}^2(1.1))^2}{2}\bigg{]},
\end{equation}
\begin{equation}\label{C6}
\mathcal{C}_6=1.5+\frac{2(C_{KPV}^2C_{SE}^2(1.1))^2}{\mu}.
\end{equation}

\subsection*{Acknowledgments} RGB is partially supported by the grant MTM2014-59488-P from the former Ministerio de Econom\'ia y Competitividad (MINECO, Spain). We thank deeply an anonymous Referee for remarks that greatly improved the first version of this paper. 

\bibliographystyle{abbrv}

\end{document}